%Current version, but with all substantial LaTeX comments stripped out
\documentclass{amsart}
\usepackage{amsmath,amsfonts,mathrsfs,amssymb,enumerate,graphicx,subfig,caption}
\usepackage{xcolor} %I added, we can dismiss later if not needed.
\usepackage[backref=page]{hyperref} 
\renewcommand*{\backref}[1]{}
\renewcommand*{\backrefalt}[4]{%
    \ifcase #1 (Not cited.)%
    \or        (Cited on page~#2.)%
    \else      (Cited on pages~#2.)%
    \fi}
%^^^^^ can be useful for writing, can delete later if desired 

\numberwithin{equation}{section}

\newtheorem{theorem}{Theorem}

\newtheorem{introtheorem}{Theorem}

\newtheorem{proposition}{Proposition}[section]
\newtheorem{corollary}[proposition]{Corollary}
\newtheorem{lemma}[proposition]{Lemma}

\newtheorem{definition}[proposition]{Definition}
\theoremstyle{remark}
\newtheorem{remark}[proposition]{Remark}

\DeclareMathOperator{\tr}{tr}

\DeclareMathOperator{\Gr}{Gr}

\DeclareMathOperator{\im}{im}

\DeclareMathOperator{\rank}{rank}

\DeclareMathOperator{\dimaff}{dim_{\mathsf{aff}}}
\DeclareMathOperator{\dimaffQ}{dim_{\mathsf{aff}}^Q}
\DeclareMathOperator{\dimlyap}{dim_{\mathsf{Lyap}}}
\DeclareMathOperator{\dimlyapQ}{dim_{\mathsf{Lyap}}^Q}

\DeclareMathOperator{\dimh}{dim_{\mathsf{H}}}
\DeclareMathOperator{\dimb}{dim_{\mathsf{B}}}
\DeclareMathOperator{\dimp}{dim_{\mathsf{P}}}

\DeclareMathOperator{\dimbu}{\overline{dim}_{\mathsf{B}}}

\DeclareMathOperator{\dimloc}{dim_{\mathsf{loc}}}
\DeclareMathOperator{\dimlocl}{\underline{dim}_{\mathsf{loc}}}
\DeclareMathOperator{\dimlocu}{\overline{dim}_{\mathsf{loc}}}

\DeclareMathOperator{\GL}{GL}
\DeclareMathOperator{\SO}{SO}

\renewcommand{\O}{\mathrm{O}}

\DeclareMathOperator{\id}{id}
\DeclareMathOperator{\End}{End}

\DeclareMathOperator{\ess}{ess}

\DeclareMathOperator{\diam}{diam}

\DeclareMathOperator{\supp}{supp}
\DeclareMathOperator{\Span}{span}
\newcommand{\threebar}[1]{{\left\vert\kern-0.25ex\left\vert\kern-0.25ex\left\vert #1 
    \right\vert\kern-0.25ex\right\vert\kern-0.25ex\right\vert}}
\newcommand{\iii}{\mathtt{i}}
\newcommand{\jjj}{\mathtt{j}}
\newcommand{\kkk}{\mathtt{k}}

\newcommand{\spectralradius}{\mathscr{R}}

\newcommand{\R}{\mathbb{R}}

\newcommand{\N}{\mathbb{N}}

\newcommand{\I}{\mathcal{I}}
\newcommand{\J}{\mathcal{J}}
\newcommand{\A}{\mathsf{A}}
\newcommand{\B}{\mathsf{B}}
\newcommand{\Z}{\mathbb{Z}}

\usepackage{multicol}
\usepackage{changepage}
\usepackage[shortlabels]{enumitem}
\usepackage{tabularx}
\usepackage{array}
\usepackage{longtable}
\usepackage{booktabs}
\newcommand{\symref}[1]{%
{\footnotesize\hyperref[#1]{(p.~\pageref*{#1})}}}

\addtolength{\hoffset}{-0.1cm}
\addtolength{\textwidth}{0.2cm}
\addtolength{\voffset}{-0.1cm}
\addtolength{\textheight}{0.2cm}

\usepackage{soul}

\setcounter{tocdepth}{1}

\captionsetup[subfigure]{labelformat=empty, labelsep=period}
\begin{document}

\title{Projections of self-affine fractals}
\author{Ian D. Morris and  Cagri Sert}
\address{School of Mathematical Sciences, Queen Mary University of London, Mile End Road, London E1 4NS, U.K.}
\email{i.morris@qmul.ac.uk }

\address{Mathematics Institute, Zeeman Building, University of Warwick, Coventry CV4 7AL, U.K.}
\email{cagri.sert@warwick.ac.uk}

\begin{abstract}
We extend Falconer's 1988 landmark result on the dimensions of self-affine fractals to encompass the dimensions of their projections, 
showing furthermore %in the process
that their families of exceptional projections % in general
contain algebraic %structures 
varieties which are
preserved by
%substantially determined by
the underlying linear algebraic group. 
The techniques which we develop allow us to construct examples of additional %further
new %and
%unexpected
phenomena: firstly, we give general examples of equilibrium measures on self-affine fractals which admit non-exact-dimensional projections. Secondly, we construct strongly irreducible self-affine sets which have small sumsets without any arithmetic resonance in their construction. %and also admitting positive dimensional subvariety of exceptional projections.
\end{abstract}
%MSC codes: 28A80, 
\maketitle
\tableofcontents

\section{Introduction}\label{se:small-intro}

\subsection{Context: the theorems of Falconer and Marstand}
This article draws together two classic results in fractal geometry, each itself a major focus of contemporary research: Falconer's theorem on self-affine fractals, and Marstrand's theorem on projections of fractal sets. We first briefly review these two results.

Let $\I$ be a finite nonempty set and $(T_i)_{i \in \I}$ a set of transformations of $\R^d$ which are contracting with respect to some norm $\threebar{\cdot}$. \label{notation:arbitnorm} By a classic result of J.E.~Hutchinson \cite{Hu81} there exists a unique nonempty compact set $X\subset \R^d$ such that $\bigcup_{i \in \I}T_iX=X$. In this context $(T_i)_{i\in \I}$ is called an \emph{iterated function system} or \emph{IFS} and the set $X$ is called its \emph{attractor}. If the transformations $T_i$ are similitudes then $X$ is called a self-similar set, and if they are affinities, $X$ is called \textit{self-affine}. In the latter case, writing each $T_i$ in the form $T_ix \equiv A_ix+u_i$, we call $\A=(A_i)_{i \in \I}$ \label{notation:A} the \emph{linearisation} of the affine IFS $(T_i)_{i \in \I}$. In the 1988 article \cite{Fa88} Falconer defined a real number $\dimaff \A$ --- now conventionally called the \emph{affinity dimension}, and whose definition we defer to \S\ref{se:big-intro} below --- which is unconditionally an upper bound for the Hausdorff dimension of the attractor of an affine IFS with linearisation $\A$. Under the additional condition $\max_{i,j \in \I\colon i\neq j} \threebar{A_i}+\threebar{A_j}<1$, where $\threebar{\cdot}$ also denotes the operator norm induced by the norm $\threebar{\cdot}$ on $\R^d$, the current form of Falconer's theorem (see \cite{BaSiSo23,So98}) asserts that for Lebesgue almost every affine IFS with linearisation $\A$, the Hausdorff dimension of the attractor is \emph{equal} to the affinity dimension of $\A$. This foundational result has guided over three decades of research on self-affine fractals (to mention a few: \cite{BaHoRa19,DaSi17,Fe23,FeSh14,JoPoSi07,MoSe23a,MoSh19,Ra24}) guided especially by the problem of replacing the Lebesgue almost-every hypothesis in this result by explicit verifiable conditions on the linearisation $(A_i)$ and translation component $(u_i)$. In dimension $d \leq 3$ it is known to be sufficient that the linearisation should be \emph{strongly irreducible} -- that is, that no proper linear subspace of $\R^d$ should have finite orbit under the action of $\A$ -- and that the distinct images $T_iX$ satisfy the \emph{strong open set condition} which guarantees that the overlaps between these images are in an appropriate sense small. In higher dimensions the appropriate conditions remain somewhat speculative, and for self-affine sets in $\R^4$ which are not self-similar and which have affinity dimension higher than $2$, no explicit results are currently known. 

Our main result will extend Falconer's theorem to linear projections of self-affine sets, expanding the scope of the above-described dimension problem for self-affine sets to this more general setting.

The second theme of this work is the dimension theory of linear projections of Borel subsets of $\R^d$, as exemplified by a second landmark result, the Marstand Projection Theorem of 1954 (\cite{Ma54}), which we now describe. Let $X$ be a Borel subset of $\R^d$. If $Q$\label{notation:Q} is an orthogonal projection of $\R^d$ onto a linear subspace then the Hausdorff dimension of the image set $QX$ is subject to the trivial upper bound $\dimh QX \leq \min \{\dimh X, \rank Q\}$. \label{notation:dimhX} The projection $Q$ is called \emph{exceptional} if the previous inequality is strict. Marstrand's theorem --- proved in the planar case by Marstrand in \cite{Ma54} and extended and simplified by Kaufmann and Mattila in \cite{Ka68,Ma75} --- asserts in a precise sense that exceptional projections of a Borel set are rare. Specifically, for each $k=0,\ldots,d$ let $\Gr(k,d)$ denote the Grassmannian manifold of $k$-dimensional subspaces of $\R^d$ and for each $U \in \Gr(k,d)$ let $Q_U$ denote orthogonal projection onto $U$. In its modern form, Marstrand's theorem states that for every Borel $X\subseteq \R^d$ and $k=0,\ldots,d$, the set of all $U \in \Gr(k,d)$ such that $Q_U$ is exceptional is a set of Lebesgue measure zero. Subsequent developments and current active research focus on studying the size (e.g.\ Hausdorff dimension) of the set of exceptional projections, and on restricted Marstand problems, i.e. the problem of proving finer results which apply to exceptional projections within some proper submanifold of the Grassmanniann variety (see e.g.\ the recent breakthroughs \cite{GaGuGuHaMaWa22,KaOrVe17,PrYaZa22} around the  F\"assler--Orponen conjecture \cite{FaOr14} in dimension $d \leq 3$). Among other consequences, our main result will imply a new Marstand-type projection theorem for typical self-affine sets, stratified with respect to a filtration of subvarieties in Grassmannianns. We will also give new examples of algebraic varieties which can occur inside the set of exceptional projections of a self-affine set (see Remark \ref{re:varieties} in \S\ref{ss:examples} below).

We now proceed to state our principal results. The more technical statements which underlie them will be discussed in detail in \S \ref{se:big-intro}.

\subsection{A Falconer-type theorem for projections of self-affine sets}

The following is the first and main result of this article.
\label{notation:GLW}
\begin{introtheorem}\label{intro:stratified}
Let $\A=(A_i)_{i \in \I} \in \GL_d(\R)^\I$ and suppose that $\max_{i \in \I} \threebar{A_i}<1$ for some norm $\threebar{\cdot}$ on $\R^d$. For every $k=0,\ldots,d$ there exist an integer $m \geq 1$, a finite filtration $\emptyset = \mathcal{W}_{m+1}\subset \mathcal{W}_m \subset \cdots \subset \mathcal{W}_1=\Gr(k,d)$ of algebraic varieties each invariant under the linear algebraic group generated by $\A$, and real numbers $s_m<\cdots < s_1\leq k$ with the following properties:
\begin{enumerate}[(a)]
    \item 
For every affine iterated function system with linearisation $\A$, the attractor $X$ satisfies $\dimh Q_UX \leq s_j$ for every $U \in \mathcal{W}_j \setminus \mathcal{W}_{j+1}$.
\item \label{it:strat-b}
Suppose additionally that $\max_{i,j \in \I\colon i\neq j} \threebar{A_i}+\threebar{A_j}<1$. Then for every $U \in \mathcal{W}_j \setminus \mathcal{W}_{j+1}$, for Lebesgue almost every affine iterated function system with linearisation $\A$, the attractor $X$ satisfies $\dimh Q_UX = s_j$.
\end{enumerate}
\end{introtheorem}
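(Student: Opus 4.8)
The plan is to route everything through the \emph{projected affinity dimension} $D(U):=\dimaff^{Q_U}\A\in[0,k]$, whose definition as a (skew-product) sub-additive pressure and the unconditional bound $\dimh Q_U X\le D(U)$ — valid for the attractor $X$ of any affine iterated function system with linearisation $\A$ under $\max_i\threebar{A_i}<1$ alone — are set out in \S\ref{se:big-intro}; at the level of this proposal that bound is the usual covering estimate ($Q_U X\subseteq\bigcup_{|\mathtt i|=n}Q_U T_{\mathtt i}(B)$, each piece an ellipsoid in $U$ whose semiaxes are comparable to the singular values of $Q_U A_{\mathtt i}$, so $\mathcal H^s_\delta(Q_U X)\lesssim\sum_{|\mathtt i|=n}\phi^s(Q_U A_{\mathtt i})\to 0$ when $s>D(U)$). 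Part~(a) then reduces to one claim about the function $D$ on $\Gr(k,d)$: every sub-level set $\mathcal W^{(s)}:=\{U:D(U)\le s\}$ is Zariski-closed and invariant under the algebraic group $G$ generated by $\A$. Granting this, the $\mathcal W^{(s)}$ decrease with $s$, so by the descending chain condition for the (Noetherian) Zariski topology on $\Gr(k,d)$ only finitely many are distinct; if $s_1>\cdots>s_m$ are the values taken by $D$, then $\mathcal W_j:=\mathcal W^{(s_j)}$ together with $\mathcal W_{m+1}:=\emptyset$ form the required filtration, with $D\equiv s_j$ on $\mathcal W_j\setminus\mathcal W_{j+1}$ and hence $\dimh Q_U X\le s_j$ there; finally $s_1\le k$ because $\rank Q_U A_{\mathtt i}\le k$.

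For the $G$-invariance I would use $\sigma_\ell(Q_U A_{\mathtt i})=\sigma_\ell(A_{\mathtt i}^{\top}|_{U})$, which shows that the relevant potential is a singular-value function of the cocycle $(A_i^{\top})$ restricted to the moving $k$-plane, together with the submultiplicativity $\phi^s(NM)\le\phi^s(N)\phi^s(M)$ for linear maps of $k$-dimensional spaces. Writing $f^s_n(U):=\sum_{|\mathtt i|=n}\phi^s(Q_U A_{\mathtt i})$ and splitting words of length $n+m$ into a prefix of length $m$ and a suffix of length $n$, submultiplicativity plus the uniform two-sided bounds on $V\mapsto\phi^s(A_{\mathtt k}^{\top}|_{V})$ over the compact $\Gr(k,d)$ (all restrictions being invertible) give comparisons $f^s_{n+m}(U)\asymp_m\sum_{|\mathtt k|=m}f^s_n(A_{\mathtt k}^{\top}U)$, hence $D(A_{\mathtt k}^{\top}U)\le D(U)$ for every word $\mathtt k$; since the Zariski closure of the semigroup generated by $\{A_i^{\top}\}_{i\in\I}$ is a group and $D$ is lower semicontinuous (a point covered by the stratification below), this upgrades to $g$-invariance of $D$, and hence of every $\mathcal W^{(s)}$, for $g$ in that closure — which, using $(g^{\top}U)^{\perp}=g^{-1}U^{\perp}$ to pass to complementary subspaces, is the $G$-invariance asserted in Theorem~A.

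The main obstacle is to prove that $\mathcal W^{(s)}$ is \emph{Zariski}-closed, not merely Euclidean-closed and $G$-invariant. The route I propose is to show that $D(U)$ is determined by the \emph{incidence type} of $U$, namely the tuple $\big(\dim(U\cap W)\big)_W$ as $W$ runs over the finite family of $G$-invariant subspaces of $\R^d$ (and of the flags attached to the block-triangular decomposition of $G$): one proves that only finitely many incidence types occur, that each cuts out a $G$-invariant locally Zariski-closed stratum of $\Gr(k,d)$, that $D$ is constant on each stratum — because on a fixed incidence stratum the projected cocycles $(Q_U A_{\mathtt i})$ are, up to uniformly bounded distortion, conjugate to one another — and that $D$ can only decrease under degeneration of the incidence type (more incidences kill more of the expanding directions of the $A_{\mathtt i}$), which yields in particular the lower semicontinuity of $D$ used above. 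It follows that each $\mathcal W^{(s)}$ is a finite union of strata, hence constructible, $G$-invariant and Euclidean-closed, and therefore Zariski-closed. Pinning down the precise finite list of incidence data that governs $D$, and proving the distortion and monotonicity statements — complicated by the fact that $\mathtt i\mapsto\phi^s(Q_U A_{\mathtt i})$ is in general \emph{not} submultiplicative over the shift alone, so the whole pressure machinery must live on the skew-product $\Sigma\times\Gr(k,d)$ — is where the real work lies.

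For the almost-everywhere equality in part~(b): fix $U\in\mathcal W_j\setminus\mathcal W_{j+1}$, so $D(U)=s_j$, and fix $s<s_j$. Under $\max_{i\ne j}\threebar{A_i}+\threebar{A_j}<1$ I would run the transversality method of Falconer and Solomyak (in the form of \cite{So98,BaHoRa19,BaSiSo23}) \emph{through the projection}: take an equilibrium measure $\eta$ on $\I^{\N}$ for the potential $\log\phi^s(Q_U A_{\mathtt i})$ (constructed via the skew-product formalism above), and bound the parameter-averaged energy $\int\!\big(\iint |Q_U(\pi^{u}x-\pi^{u}y)|^{-s}\,d\eta(x)\,d\eta(y)\big)\,du$, where $\pi^{u}$ is the coding map of the IFS with translations $u=(u_i)_{i\in\I}$. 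Integrating out $u$ and splitting by the length $n$ of the common prefix $\mathtt i=x\wedge y$ reduces this to the \emph{projected transversality estimate} $\int\|Q_U A_{\mathtt i}\,v+Q_U(u_{\mathtt i}-u_{\mathtt j})\|^{-s}\,du\le C\,\phi^s(Q_U A_{\mathtt i})^{-1}$ for incomparable words $\mathtt i,\mathtt j$, which holds for $s<k$ because $Q_U(u_{\mathtt i}-u_{\mathtt j})$ has a smooth density on $U\cong\R^k$ whose convolution with $\|\cdot\|^{-s}$ is controlled by the shape of the ellipsoid $Q_U A_{\mathtt i}(B_d)$, that is by $\phi^s(Q_U A_{\mathtt i})^{-1}$ (the same computation as in Falconer's lemma, with $Q_U A_{\mathtt i}$ and $k$ replacing $A_{\mathtt i}$ and $d$). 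Since $s<s_j=D(U)$ forces $\sum_n\sum_{|\mathtt i|=n}\phi^s(Q_U A_{\mathtt i})<\infty$, the averaged energy is finite, so for Lebesgue-a.e.\ $u$ there is a measure of finite $s$-energy on $Q_U X$, whence $\dimh Q_U X\ge s$; letting $s\uparrow s_j$ and invoking part~(a) gives $\dimh Q_U X=s_j$. The two delicate points here are the skew-product thermodynamic formalism for the non-submultiplicative potential and the uniformity in $U$ of the projected transversality estimate.
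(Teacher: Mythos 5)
Your overall architecture matches the paper's: reduce Theorem~\ref{intro:stratified} to (i) algebraicity and $\A$-invariance of the sub-level sets of $U\mapsto\dimaffQ[Q=Q_U]$ --- in the paper's notation, of $Q\mapsto\dimaffQ\A$ --- plus finiteness of the value set (Proposition~\ref{pr:dimaffq-subvarieties}), and (ii) the unconditional covering upper bound together with an a.e.\ transversality lower bound (Theorem~\ref{th:dimension-results}). Your covering argument for (a) and your invariance argument via $P_{QA_i}(\A,s)\le P_Q(\A,s)$ plus Noetherianity/semicontinuity are essentially the paper's. However, both hard steps are resolved by a mechanism that does not work.

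First, the algebraicity of $\{U: \dimaffQ[Q=Q_U]\A\le t\}$ cannot be governed by the incidence data $\bigl(\dim(U\cap W)\bigr)_W$ over $G$-invariant subspaces $W$ of $\R^d$. The pressure drop is controlled by the $\R[G^o]$-module structure of the \emph{exterior powers} $\wedge^\ell\R^d$: by Theorem~\ref{th:main-tech}\eqref{it:exist-subvarieties} the condition $P_{Q}(\A,s)\le t$ is equivalent to $Q^{\wedge \ell}$ annihilating certain distinguished submodules $M_{\ell,j}\le\wedge^\ell\R^d$ (built via Lemma~\ref{le:minimal-submodule}), a linear condition on $Q^{\wedge\ell}$ and hence algebraic in $Q$. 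Your proposed stratification is blind to this: in the paper's main examples (Theorem~\ref{th:general-examples}\eqref{it:split}, e.g.\ $G=\R^*\!\cdot\SO(k,k)$ or the tensor-product representation) $G$ acts irreducibly --- even strongly irreducibly --- on $\R^d$, so every $U\in\Gr(k,d)$ has the same incidence type with respect to invariant subspaces of $\R^d$, yet $D$ is nonconstant and the exceptional locus is a nontrivial variety. Your claim that the projected cocycles are boundedly conjugate on a fixed incidence stratum is therefore false. Second, your lower bound in (b) is broken at the decisive point: for $s<D(U)$ the series $\sum_n\sum_{|\iii|=n}\varphi^s(Q_UA_\iii)$ \emph{diverges} (convergence holds for $s>D(U)$), so the averaged double energy integral is not finite for the reason you give. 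What is actually needed is a measure $\eta$ with $\eta([\iii])\lesssim e^{-n\delta}\varphi^s(Q_UA_{\iii})$, and the paper explains in \S\ref{se:big-intro} why no such Gibbs measure for the non-submultiplicative potential $\iii\mapsto\varphi^s(Q_UA_\iii)$ exists in general (Theorem~\ref{th:general-examples}\eqref{it:bad-measure-projection} shows $\tfrac1n\log\varphi^s(Q_UA_{\underline{\iii}|_n})$ need not be a.e.\ constant, which is incompatible with a two-sided Gibbs bound); moreover the global double integral controls only the essential \emph{infimum} of the local dimension, which genuinely undershoots $D(U)$ here. The paper's fix --- the essential-supremum variational principle \eqref{eq:variational-main}, the existence of $(\varphi^s,Q)$-equilibrium states that are Gibbs for an \emph{equivalent submultiplicative} potential $\Psi_{\mathbf{Z},\mathbf{b}}$, and a pointwise (in $\underline{\iii}$) energy estimate via Lemma~\ref{le:falconer-solomyak} --- is precisely the missing content, and it is not recoverable from the skew-product heuristic you sketch.
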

This result strictly generalises Falconer's aforementioned theorem, which corresponds to the special case $k=d$. 

It follows from Theorem \ref{intro:stratified}\ref{it:strat-b} together with a simple slicing argument via the Fubini-Tonelli theorem that for Lebesgue almost every affine IFS with linearisation $\A$, the dimension of the attractor's image under a projection in the class $\mathcal{W}_j$ is almost surely constant, being equal to $s_j$, with respect to Lebesgue measure on $\mathcal{W}_j$. For such sets $X$ an interesting stratified extension of Marstrand's theorem (stated below as Corollary \ref{co:stratified}) therefore holds, in which the dimensions of projections are Lebesgue almost surely constant within the filtration of algebraic varieties of projections described by Theorem \ref{intro:stratified}.

Before proceeding further, we include some remarks on the objects considered in Theorem \ref{intro:stratified}:

\begin{remark}\label{rk.ThmA}
    1. (On the filtration $\mathcal{W}_j$.) Let $G$ \label{notation:G} denote the Zariski-closure of the semigroup generated by matrices in the tuple $\A$. The subvarieties $\mathcal{W}_j$ appearing in the statement are Schubert-type subvarieties: each is defined as the collection of subspaces $U$ such that for a certain $1 \leq \ell \leq d-1$ the kernel of the $\ell^{th}$-exterior power  $Q_{U}^{\wedge \ell}$ \label{notation:Awedgek} of $Q_U$ contains a $G$-invariant subspace $W$ of $\wedge^\ell \R^d$. In particular, if $G$ acts irreducibly on each exterior power $\wedge^\ell \R^d$ for $\ell=1,\ldots,d-1$, then the filtration $(\mathcal{W}_j)$ is trivial ($m=1$). This same fact is also seen to hold if $\A$ consists of similarity transformations. The conclusion in these both cases is that for every projection, one gets the same dimension $s_1$, for almost every affine iterated function system with linearisation $\A$. To the best of our knowledge, even these conclusions in these particular cases are new.
    \\[3pt] 
   2. (On the exponents $s_j$.)
   Each exponent $s_j$ appearing in the statement is the Poincaré exponent of an explicit series similar to that which characterises the affinity dimension itself.\\[3pt]
   3. (Completely reducible case.)
   In the case where the group $G$ is reductive (i.e. if $G$ acts on $\R^d$ by block-diagonal matrices with irreducible blocks) then the varieties $\mathcal{W}_j$ can be defined in terms of a highest-weight subrepresentation of an exterior power which maximises a certain pressure functional, and the exponents $s_j$ can be expressed via the rate function of the large deviations principle for random matrix products considered in \cite{sert.LDP}. This result will be the topic of a further article \cite{morris-sert.LDP}.
\end{remark}

Exceptional projections of self-affine sets appear to have been first investigated in unpublished research by Furstenberg in the late 1970s which is described in \cite{Ke97}. Furstenberg constructed exceptional projections of self-similar sets by considering those projections whose images are themselves self-similar sets of lower affinity dimension. To date this mechanism (including minor variations such as in \cite{Fa16,Fr12}) appears to be the only known method for constructing exceptional projections of self-affine sets, and a substantial body of work has developed exhibiting cases where this mechanism cannot apply and in which it can be demonstrated that no exceptional projections exist \cite{AlSh24,FaJi14,FeJoSh10,Ho14,PeSh09,Ra24}. The results of this article present a new  general mechanism for the construction of large classes of exceptional projections. In particular we will be able to exhibit open sets of tuples $\A \in G^\I$ admitting non-trivial filtrations $(\mathcal{W}_j)_{j=1,\ldots,m}$ in Theorem \ref{intro:stratified}, for Lie groups $G\leq \GL_d(\R)$ satisfying some simple and explicit conditions (see \S \ref{ss:examples} below).

\subsection{Fractal sumsets}
An unpublished conjecture formulated by Furstenberg in the 1980s stated that if $X,Y \subset [0,1]$ are compact sets which are invariant with respect to multiplication modulo 1 by $p$ and $q$ respectively, and if $\frac{\log p}{\log q}$ is irrational, then the Hausdorff dimension of the sumset
\[X+tY:=\left\{x+ty \colon x\in X\text{ and }y \in Y\right\}\]
should be equal to the trivial upper bound $\min\{1, \dimh X+ \dimh Y\}$ 
 which arises from the observation that $X+tY$ is a linear projection of the product set $X \times Y$. Equivalently, the only exceptional projections of $X \times Y$ should be the co-ordinate projetions. Thus the \emph{arithmetic independence} of $p$ and $q$ should imply a kind of \emph{geometric independence} of the sets $X$ and $Y$. This conjecture was one of several aimed at capturing the notion that it should be very difficult for an irrational number to be non-normal with respect to two independent number bases.  Furstenberg's conjecture was proved in the special case where $X$ and $Y$ are self-similar by Peres and Shmerkin in \cite{PeSh09} and in the general case by Hochman and Shmerkin in \cite{HoSh12}. These results have led to a more general expectation that when two sufficiently structured fractal sets $X,Y \subset \R^d$ have the property $\dimh (X+Y) <\min\{d, \dimh X+ \dimh Y\}$, there should be an arithmetic degeneracy relating the structure $X$ with that of $Y$. In the recent article \cite{Py24}, for example, A. Py\"or\"al\"a has proved that if the sumset of two planar self-affine sets $X,Y$ each satisfying certain strong irreducibility, proximality and separation conditions 
 has Hausdorff dimension smaller than the trivial bound $\min \{2,\dimh X+ \dimh Y\}$ then the logarithms of certain eigenvalues of the linearisations of the underlying affine transformations must be contained in a lattice $\alpha\Z \subset\R$.

 In this article we are able to construct robust families of self-affine sets which have small sumsets while lacking any of the arithmetic degeneracy or ``resonance'' constraints considered in works such as \cite{HoSh12,PeSh09,Py24} and without the corresponding linearisations preserving any proper subspaces of $\R^d$.

\begin{introtheorem}\label{intro:sumset}
Let $d_1,d_2 \geq 2$ and let $G<\GL_{d_1d_2}(\R)\simeq \GL(\R^{d_1}\otimes \R^{d_2})$ denote the group of all linear maps of the form $g\otimes h$ where $g \in \GL_{d_1}(\R)$ and $h \in \GL_{d_2}(\R)$. Then for all large enough finite sets $\I,\J$ there exists a nonempty open set of $(\A,\B) \in G^\I\times  G^\J$ with the following property: for Lebesgue almost every IFS $(T_i)_{i \in \I}$ with linearisation $\A$, and for Lebesgue almost every IFS $(T_j')_{j \in \J}$ with linearisation $\B$, the attractor $X$ of $(T_i)_{i \in \I}$ and the attractor $Y$ of $(T_j')_{j \in \J}$ satisfy
\[\dimh (X+Y) <\dimh X+\dimh Y <d_1d_2.\] 
\end{introtheorem}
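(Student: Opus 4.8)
The plan is to realise the sumset as a linear image of the product self-affine set $X\times Y$ and to invoke Theorem~\ref{intro:stratified}, using the tensor structure of $G$ only to place the relevant projection in a nontrivial stratum.

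Write $N=d_1d_2$ and let $V_1,V_2$ be the two copies of $\R^{d_1}\otimes\R^{d_2}$ in $\R^{2N}=V_1\oplus V_2$. The addition map $(x,y)\mapsto x+y$ on $V_1\oplus V_2$ and the orthogonal projection $Q_{U_0}$ onto the diagonal $U_0=\{(x,x)\}\in\Gr(N,2N)$ have the same kernel (the antidiagonal), so $X+Y$ is the image of $Q_{U_0}(X\times Y)$ under a linear isomorphism; since Hausdorff dimension is a bi-Lipschitz invariant, $\dimh(X+Y)=\dimh Q_{U_0}(X\times Y)$. Now $X\times Y$ is the attractor of the affine IFS $(T_i\times T_j')_{(i,j)\in\I\times\J}$ on $\R^{2N}$, with linearisation $\mathsf{D}=(A_i\oplus B_j)_{(i,j)}$, and only the \emph{unconditional} half of the main theorem is needed here: by Theorem~\ref{intro:stratified}(a) applied to $\mathsf{D}$ with $k=N$, \emph{every} IFS with linearisation $\mathsf{D}$ — the product IFS in particular, whatever its translation data — satisfies $\dimh Q_{U_0}(X\times Y)\le s_{j_0}$, where $U_0\in\mathcal{W}_{j_0}\setminus\mathcal{W}_{j_0+1}$ is the stratum of the diagonal. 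For the dimensions of $X$ and $Y$ themselves one applies Falconer's theorem to $\A$ and to $\B$ \emph{separately}: if the norm conditions $\max_i\threebar{A_i}<1$, $\max_{i\neq i'}(\threebar{A_i}+\threebar{A_{i'}})<1$ and their analogues for $\B$ hold, then for Lebesgue-a.e.\ $(u_i)_{i\in\I}$ and a.e.\ $(v_j)_{j\in\J}$ one has $\dimh X=\dimaff\A$ and $\dimh Y=\dimaff\B$. (No genericity is needed for the product IFS: we use only an unconditional upper bound for its projection, and the lower bounds for $X,Y$ on their own.) Thus the theorem reduces to exhibiting, for all large enough $\I,\J$, a nonempty open $\mathcal{O}\subseteq G^\I\times G^\J$ on which the norm conditions hold and
\[
s_{j_0}<\dimaff\A+\dimaff\B<N.
\]

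The first ingredient is geometric and is where the tensor structure enters. Since $A_i,B_j\in G=\GL_{d_1}\otimes\GL_{d_2}$, the map $\wedge^a A_i$ preserves every $G$-invariant subspace of $\wedge^a V_1$, and likewise on $V_2$. By Cauchy's formula $\wedge^a(\R^{d_1}\otimes\R^{d_2})\cong\bigoplus_{\lambda}S_\lambda(\R^{d_1})\otimes S_{\lambda'}(\R^{d_2})$ is a \emph{reducible} $G$-module for every $2\le a\le N-2$ (here $d_1,d_2\ge2$, so $N\ge4$); I fix one such $a$, a proper nonzero $G$-invariant subspace $\Phi\subseteq\wedge^a V_1$, and set $\Psi:=\Phi^{\perp}\subseteq\wedge^{N-a}V_2$, its annihilator under the $G$-equivariant wedge pairing $\wedge^a\R^N\times\wedge^{N-a}\R^N\to\wedge^N\R^N$. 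Then $\Psi\ne0$ and $\Psi$ is $G$-invariant, so $W:=\Phi\otimes\Psi\subseteq\wedge^a V_1\otimes\wedge^{N-a}V_2\subseteq\wedge^N\R^{2N}$ is a nonzero subspace invariant under $\mathsf{D}$, hence under its Zariski closure. Because $Q_{U_0}$ restricts to an isomorphism on each of $V_1,V_2$ (both becoming $\tfrac12\id$ after identifying $U_0$ with $\R^N$), the component of $\wedge^N Q_{U_0}$ on the summand $\wedge^a V_1\otimes\wedge^{N-a}V_2$ is, up to a scalar, the wedge multiplication $\phi\otimes\psi\mapsto\phi\wedge\psi$; so by definition of $\Psi$ we get $W\subseteq\ker\wedge^N Q_{U_0}$. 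By the Schubert-type description of the strata in Remark~\ref{rk.ThmA} (and since $W$, being invariant under a reductive group, contains an irreducible invariant subspace of the kind used to build the filtration), the presence of such a $W$ places $U_0$ outside the generic stratum, whence $j_0\ge2$ and $s_{j_0}\le s_2<s_1$. This holds for \emph{every} $(\A,\B)\in G^\I\times G^\J$; the open set $\mathcal{O}$ will be produced by the numerics.

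The final — and main — step is the inequality $s_{j_0}<\dimaff\A+\dimaff\B<N$, and I expect the real work to lie here. Knowing $s_{j_0}<s_1$ does not by itself suffice: for $\A,\B$ far from conformal one has $\dimaff\mathsf{D}>\dimaff\A+\dimaff\B$ in general (using $\phi^s(A\oplus B)=\max_{s'+s''=s}\phi^{s'}(A)\phi^{s''}(B)$, the affinity dimension of $\mathsf{D}$ arises from a supremum over splits that is strictly larger than the value at the balanced split), so $s_1=\min(N,\dimaff\mathsf{D})$ can exceed $\dimaff\A+\dimaff\B$; yet taking $\A,\B$ conformal is not allowed, since then the filtration collapses (Remark~\ref{rk.ThmA}). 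My approach would be to keep $\A,\B$ only mildly non-conformal — for instance with singular values constant within each of the $d_1$, resp.\ $d_2$, tensor blocks — and, using the freedom of large $\I,\J$, to tune the singular-value profiles and the cardinalities $|\I|,|\J|$ so that: (i) all norms are small, placing $\dimaff\A,\dimaff\B$ below $N/2$ and validating Falconer's hypotheses; and (ii) the chosen $a$ realises the dominant split in $\max_{s'+s''=N}\phi^{s'}(A_w)\phi^{s''}(B_{w'})$ for typical long words $w,w'$, with the leading singular direction of $\wedge^a A_w$ (resp.\ $\wedge^{N-a}B_{w'}$) lying in $\Phi$ (resp.\ $\Psi$), so that $W$ absorbs the dominant direction of $\wedge^N(A_i\oplus B_j)^{(n)}$ and the compression governing $\dimh Q_{U_0}(X\times Y)$ becomes exponentially smaller, pushing its Poincaré exponent $s_{j_0}$ strictly below $\dimaff\A+\dimaff\B$ while the latter remains below $N$. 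Checking that a nonempty open region of parameters satisfies (i)–(ii) simultaneously is the crux; granting it, the theorem follows by assembling Theorem~\ref{intro:stratified}(a), Falconer's theorem for $\A$ and $\B$, and bi-Lipschitz invariance of dimension.
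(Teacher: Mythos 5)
Your reduction is the same as the paper's: realise $X+Y$ as a linear image of the product attractor, use only the unconditional upper bound of Theorem~\ref{intro:stratified}(a) (equivalently Theorem~\ref{th:dimension-results}\eqref{it:dimension-upper-bounds}) for that image, and apply Falconer's theorem to $\A$ and $\B$ separately for the lower bounds on $\dimh X$ and $\dimh Y$. That part is correct. The genuine gap is exactly where you say you ``expect the real work to lie'': you never establish, for a nonempty open set of $(\A,\B)$, the strict inequality $\dimaffQ(\A\oplus\B)<\dimaff\A+\dimaff\B$. Your stratification argument only shows that $\ker\wedge^N Q_{U_0}$ contains a $\mathsf{D}$-invariant subspace, which does not by itself place $U_0$ in a lower stratum of the filtration for the product system (compare Theorem~\ref{th:general-examples}\eqref{it:no-split}, where such containments produce no pressure drop at all), and even granting $s_{j_0}<s_1$ you correctly observe this is compared against $\dimaff(\A\oplus\B)$, not against $\dimaff\A+\dimaff\B$. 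The plan you then sketch --- working with invariant subspaces of middle exterior powers $\wedge^a(\R^{d_1}\otimes\R^{d_2})$ and tuning which split $s'+s''=N$ dominates --- is not carried out and does not match a mechanism that actually closes the gap.

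What the paper does instead is quantitative and bypasses the stratification entirely. One imposes the open conditions that $s:=\dimaff\A+\dimaff\B\in(d_1d_2-1,d_1d_2)$ (so that $\varphi^s$ of the rank-$N$ matrix $Q(A_\iii\oplus B_\jjj)$ involves its \emph{smallest} nonzero singular value raised to the small power $s-(N-1)$) together with domination conditions \eqref{eq:domination-liminf1}--\eqref{eq:domination-liminf2} on the tensor factors, all open by Feng--Shmerkin and Bochi--Gourmelon and nonempty for large $\I,\J$. Then one shows $P_Q(\A\oplus\B,s)<0$ directly: writing $\sigma_{N}(Q(A_\iii\oplus B_\jjj))^2=\sigma_N(A_\iii A_\iii^T+B_\jjj B_\jjj^T)$ and testing against a \emph{pure tensor} $u\otimes v$ with $u$ adapted to $A'_\iii$ and $v$ adapted to $B''_\jjj$, one finds a single direction on which both $A_\iii$ and $B_\jjj$ contract anomalously --- this is the resonance of low-dimensional bundles (the Segre variety) that the tensor structure provides, and it has no analogue in your exterior-power scheme. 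The resulting exponential gain, raised to the power $s-(N-1)$, beats $P(\A\oplus\B,N-1)$ by the domination hypotheses, giving $P_Q(\A\oplus\B,s)<0$ and hence $\dimh(X+Y)\le\dimaffQ(\A\oplus\B)<s$. Without this (or an equivalent) estimate, your argument does not prove the theorem.
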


If the tuples $\A,\B$ are allowed to be reducible, i.e.\ if they leave some non-trivial linear subspaces invariant, one can trivially construct such examples with small sumsets: for example, the product of an interval with a small Cantor set is a self-affine set of this type, and the sumset of such a set with itself obviously has small dimension due to dimension saturation of the interval. The underlying principle of such examples is simply that a set consisting of a low-dimensional bundle of dimension-saturated subspaces should have small sumset with itself, which is reminiscent of certain analogous matters in additive combinatorics: a subset of $\Z_2^d$ can have small sum set with itself only when it is contained in a union of a small number of additive cosets \cite{GoGrMaTa24}.  In our examples the dimension drop instead arises from the existence of low-dimensional bundles of linear subspaces which have nontrivial pairwise intersections, a mechanism which (similarly to the geometric resonance of \cite{PeSh09}) lacks  any obvious analogue in the additive-combinatorial context.

The method underlying Theorem \ref{intro:sumset} is flexible and allows for the construction of many other classes of examples. The development of a more comprehensive theory of sum sets of self-affine sets will be the subject of future research.

\subsection{Projections of measures}

If $\nu$ is a Borel probability measure on $\R^d$, denoting by $\mathbf{B}_d(x,r)$ the \label{notation:ball} closed ball in $\R^d$ with radius $r$ and centre $x$, we recall that the upper and lower local dimensions of $\nu$ at a point $x\in \supp \nu$ are respectively defined to be 
\[\dimlocu(\nu,x):=\limsup_{r \to 0}\frac{\log \mathbf{B}_d(x,r)}{\log r},\qquad  \dimlocl(\nu,x):=\liminf_{r \to 0}\frac{\log \mathbf{B}_d(x,r)}{\log r}.\]
If the two quantities are $\nu$-a.e.\ equal and constant, the measure $\nu$ is said to be \emph{exact-dimensional} and this common value is referred to simply as the dimension of the measure $\nu$. It is an established principle in the dimension theory of dynamical systems that ``dynamically natural'' ergodic invariant measures should be exact-dimensional, at least when the system is sufficiently smooth and when no zero Lyapunov exponents are present: in smooth ergodic theory this idea is manifested for example in the Eckmann-Ruelle conjecture for hyperbolic measures \cite{EcRu85} which was proved by Barreira, Pesin and Schmeling in \cite{BaPeSc99}. The attractor of an affine iterated function system $(T_i)_{i \in \I}$, on the other hand, may be naturally equipped with invariant measures by the following procedure. For each $(i_k)_{k=1}^\infty \in \I^\N$, the contractivity of $(T_i)$ guarantees that the limit
\[\Pi[(i_k)_{k=1}^\infty]:=\lim_{n \to \infty} T_{i_1}T_{i_2}\cdots T_{i_n}v\]
exists for all $v \in \R^d$ and is independent of the value of $v$. The resulting \emph{coding map} $\Pi \colon \I^\N \to X$ is then continuous with respect to the product topology on $\I^\N$. Given any probability measure $\mu$ on $\I^\N$ which is invariant with respect to the shift transformation $(i_k)_{k=1}^\infty\mapsto (i_{k+1})_{k=1}^\infty$, we may then consider the measure $\Pi_*\mu$ as an invariant measure on the attractor $X$. If the measure $\mu$ is ergodic then $\Pi_*\mu$ is known to be exact-dimensional by a theorem of D.-J.~Feng \cite{Fe23}, which is in effect the analogue for self-affine sets of the Eckmann-Ruelle conjecture.

The techniques developed in this article allow us to demonstrate a surprising fragility in Feng's result: its conclusions do not apply to the \emph{projections} of invariant measures on self-affine sets.
\begin{introtheorem}\label{intro:not-exact-dimensional}
For every $d \geq 2$ there exists an irreducible affine iterated function system $(T_i)_{i \in \I}$ acting on $\R^d$ which admits a unique and ergodic invariant measure $\Pi_*\mu$ whose dimension equals that of the attractor, and such that there exist projections $Q$ of $\R^d$ onto lower-dimensional subspaces such that the measure $Q_*\Pi_*\mu$ is not exact-dimensional.
\end{introtheorem}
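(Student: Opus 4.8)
The plan is to build, for every $d \geq 2$, a single irreducible linearisation $\A=(A_i)_{i\in\I}\in\GL_d(\R)^{\I}$, a Bernoulli measure $\mu$ on $\I^{\N}$, and a rank-one orthogonal projection $Q$, so that $\dimh X = \dim \Pi_*\mu$ while $Q_*\Pi_*\mu$ has distinct lower and upper local dimensions on a set of positive measure. The point is that $Q$ does not intertwine the iterated function system with one on $QX$: the projected measure is not itself self-affine, so Feng's theorem does not apply to it, and — unlike $\Pi_*\mu$ — its small-scale behaviour can genuinely oscillate. After multiplying all the $A_i$ by a small scalar we may assume $\max_{i\neq j}\threebar{A_i}+\threebar{A_j}<1$, so that the version of Falconer's theorem quoted in \S\ref{se:small-intro} applies; choosing the translation parts generically gives an attractor $X$ with $\dimh X=\dimaff\A$. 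I would then take $\mu$ to be an equilibrium state for the singular value function at the parameter $\dimaff\A$; for the linearisations used below this equilibrium state is unique and Bernoulli, and $\dim\Pi_*\mu=\dimaff\A=\dimh X$ by Feng's exact-dimensionality theorem together with the variational description of $\dimh X$, so that $\Pi_*\mu$ is the unique (ergodic) invariant measure of maximal dimension. I will moreover arrange $\dimaff\A>1$, so $\dim\Pi_*\mu$ exceeds $\rank Q=1$ and $Q_*\Pi_*\mu$ lies in the ``saturated'' regime.

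\emph{Reduction to an oscillation statement.} Write $\nu:=Q_*\Pi_*\mu$ on the line $U=\im Q$, and for $\mu$-a.e.\ $\omega=(i_k)_k$ and small $r>0$ let $n(\omega,r)$ be the least $n$ with $\threebar{QA_{i_1}\cdots A_{i_n}}\le r$. A Ledrappier--Young-type estimate — the measure-theoretic analogue of the analysis of $\wedge^{\ell}QA_{i_1}\cdots A_{i_n}$ that underlies Theorem \ref{intro:stratified} — should give, for $\mu$-a.e.\ $\omega$,
\[\dimlocl(\nu,Q\Pi\omega)=\frac{h(\mu)}{\bar\chi}+\liminf_{r\to0}\frac{\log M(\omega,r)}{\log r},\qquad \dimlocu(\nu,Q\Pi\omega)=\frac{h(\mu)}{\bar\chi}+\limsup_{r\to0}\frac{\log M(\omega,r)}{\log r},\]
where $\bar\chi=-\lim_n\frac1n\log\threebar{QA_{i_1}\cdots A_{i_n}}$ (a.s.\ well-defined and, by ergodicity, constant) and $M(\omega,r)$ is the number of level-$n(\omega,r)$ cylinders whose image under $Q\circ\Pi$ meets the $r$-ball about $Q\Pi\omega$, i.e.\ the local overlap multiplicity; $M(\omega,r)$ is unbounded as $r\to0$ precisely because $h(\mu)/\bar\chi>1$. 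It therefore suffices to produce $\A$, $\mu$ and $Q$ for which $r\mapsto\frac{\log M(\omega,r)}{\log r}$ has almost surely distinct $\liminf$ and $\limsup$; by ergodicity this then holds on a set of full $\mu$-measure, whose image under $Q\circ\Pi$ has full $\nu$-measure, so $\nu$ is not exact-dimensional.

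\emph{Producing the oscillation.} The geometry of the level-$n$ cylinder images under $Q$, and hence $M(\omega,r)$, is governed by the position of $\ker Q$ relative to the singular subspaces of $A_{i_1}\cdots A_{i_n}$. I would use a linearisation that is irreducible on $\R^d$ but degenerate for this purpose: for $d\geq3$, one whose Zariski closure permutes a finite family of subspaces in general position — so that $A_{i_1}\cdots A_{i_n}$ is adapted to a member of the family selected by an auxiliary ergodic ``configuration'' process, yet is not of monomial type, so that $\nu$ is a genuine skew-product over a positive-dimensional Grassmannian rather than reducing to a graph-directed self-similar measure on $U$ (which would be exact-dimensional by known results); for $d=2$, a conformal-type linearisation with dense rotation part. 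With $\ker Q$ placed so that it is (nearly) tangent to the dominant singular block of $A_{i_1}\cdots A_{i_n}$ in some configurations and transverse in others, the level-$n$ cylinder images alternately cluster tightly and spread out along $\mu$-typical orbits, forcing $\log M(\omega,r)/\log r$ to oscillate. Quantifying this requires a recurrence/large-deviation estimate showing that each relevant configuration is realised at the stopping scales $r=\threebar{QA_{i_1}\cdots A_{i_n}}$ along infinitely many $n$, $\mu$-almost surely. Irreducibility of $\A$ is immediate from the construction, and $\A$ can be taken in an open set of such tuples.

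\emph{Main obstacle.} The crux is the last step. One must defeat the convergence of the relevant singular flag that normally forces exact dimensionality of the projection, while simultaneously keeping $\A$ irreducible and keeping the projected \emph{set} non-degenerate (so the drop in local dimension is a genuine measure phenomenon, not a set-theoretic one), and one must control the boundary and overlap error terms in the Ledrappier--Young decomposition of the reduction step uniformly across scales. Proving that the overlap multiplicity $M(\omega,r)$ genuinely oscillates — rather than possessing a well-defined exponential order $\mu$-almost surely — is the heart of the matter, and is exactly where the resonant placement of $\ker Q$ against the non-strongly-irreducible (or conformal) structure of the linearisation is indispensable.
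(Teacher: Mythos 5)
Your proposal locates the failure of exact dimensionality in the wrong place, and the two load-bearing assumptions you make actually close off the only mechanism that works. First, you assert that $\bar\chi=-\lim_n\frac1n\log\threebar{QA_{i_1}\cdots A_{i_n}}$ is ``a.s.\ well-defined and, by ergodicity, constant''. It is not: this quantity is neither a Birkhoff average nor a subadditive cocycle (since $Q$ does not commute past $A_{i_1}$, the function $\underline{\iii}\mapsto\lim_n\frac1n\log\|QA_{\underline{\iii}|_n}\|$ is not shift-invariant), and ergodicity gives nothing. The paper's construction consists precisely of making this limit \emph{non-constant}: by Theorem \ref{th:general-examples}\eqref{it:bad-measure-projection}, when the Zariski closure acts irreducibly but not strongly irreducibly, the Oseledets line of the transposed cocycle lies inside $\ker Q^{\wedge k}$ with probability strictly between $0$ and $1$, so $\lim_n\frac1n\log\varphi^s(QA_{\underline{\iii}|_n})$ takes two distinct values, each on a set of positive $\mu$-measure; Theorem \ref{th:dimension-results}\eqref{it:dimension-lower-bounds} then converts this into non-constancy of the genuine local dimension of $Q_*\Pi_*\mu$ for Lebesgue-a.e.\ translation. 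Second, your choice of a Bernoulli $\mu$ is self-defeating: by Furstenberg--Kifer (cited in the technical remarks after Theorem \ref{th:pressure-detail}), for product measures the limit above \emph{is} a.e.\ constant, so the one available source of non-exactness disappears. The measure used in the paper is the $\varphi^s$-equilibrium state, which is an ergodic, fully supported Gibbs measure but not a product measure, and this is essential.

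Your substitute mechanism --- constant Lyapunov data plus an oscillating overlap multiplicity $M(\omega,r)$ in the saturated regime $\dimaff\A>1=\rank Q$ --- cannot work for generic translations: Theorem \ref{th:dimension-results}\eqref{it:dimension-lower-bounds}, proved via the Falconer-type energy integral of Lemma \ref{le:falconer-solomyak}, shows that for a.e.\ $\mathbf v$ the local dimension of $(Q\Pi^{\mathbf v})_*\mu$ \emph{equals} the local $Q$-projected Lyapunov dimension pointwise a.e., so if the latter is a.e.\ constant the projection is exact-dimensional and $\log M(\omega,r)/\log r$ cannot oscillate. Note that the paper works in the regime $s=\dimaff\A\in(k-1,k]$ with $\rank Q=k$, not above $\rank Q$. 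Finally, your $d=2$ suggestion of a conformal linearisation with dense rotation part fails outright: for similarities $\varphi^s(QA_\iii)\asymp\varphi^s(A_\iii)$ uniformly in $\iii$, so every projection has the same (constant) Lyapunov data, cf.\ Remark \ref{rk.ThmA}. The correct $d=2$ example is a Zariski-dense subsemigroup of the generalised permutation matrices (diagonal and antidiagonal), with $Q$ a coordinate projection.
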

In the case $d=2$ the projection $Q$ can be taken to be orthogonal projection onto either of the two co-ordinate axes of $\R^2$. Thus while the distribution of $(x,y) \in \R^2$ with respect to $\Pi_*\mu$ is exact-dimensional, the distributions of $x$ and $y$ individually are not.
In every even dimension $d:=2k \geq 4$ one may construct cases of Theorem \ref{intro:not-exact-dimensional} in which the set of rank-$k$ orthogonal projections such that $Q_*\Pi_*\mu$ is not exact-dimensional includes an algebraic variety of dimension $\frac{1}{2}k(k-1)$, and such that the linearisation of $(T_i)_{i\in \I}$ is additionally \textit{strongly irreducible}. We mention in passing that in all of these cases the measure $\mu$ is not only ergodic, but has the strongest qualitative mixing property possible in ergodic theory: its natural extension is measurably isomorphic to a Bernoulli process.

As with Theorem \ref{intro:sumset}, the example of Theorem \ref{intro:not-exact-dimensional} is \textit{robust} in the sense that the conclusion holds for Lebesgue almost every system with an appropriate linearisation, and for an open (for the Hausdorff topology) 
set of linearisations $\A$ within a certain algebraic subvariety of $\GL_d(\R)^\I$. 
The example of Theorem \ref{intro:not-exact-dimensional} is also compatible with the \emph{strong open set condition}: by including a very large number of affinities with the same linearisation, the system as above may be constructed in such a way that there exists an open neighbourhood $U$ of the attractor such that the images $T_iU$ are pairwise disjoint for distinct $i$. By a standard construction (see e.g. \cite[\S3]{ChPe10}) this implies the existence of a smooth expanding map $f \colon \bigcap_{i \in \I} T_i^{-1}U \to U$ which admits the attractor as a repelling set and $\Pi_*\mu$ as its unique invariant measure of maximal dimension. In this context the projection $Q$ can be interpreted as a smooth observable $\phi \colon U \to \R^k$ whose law with respect to the unique invariant measure of maximum dimension on the repelling set is not itself exact-dimensional.

\subsection{Recent related work}At the time that this work was being completed an alternative proof of a version of Theorem \ref{intro:stratified}
was presented by D.-J. Feng and Y.-H. Xie in \cite{FeXi25}, together with a result analogous to the case $d=2$ of Theorem \ref{intro:not-exact-dimensional}. Another version of the case $d=2$ of Theorem \ref{intro:not-exact-dimensional}, in this case allowing explicit translation parameters, will appear in forthcoming work by D.~Allen, A.~K\"aenm\"aki, R.~D.~Prokaj, K.~Simon and S.~Troscheit in \cite{AlKaPrSiTr25}.

\section{The setting and the main results}\label{se:big-intro}

\subsection{Fundamental notation}
In this section we will present the setting and the main results of this article in greater detail and generality. We first establish some fundamental notation. A detailed guide to the notations used in this article is given in Appendix \ref{ap:notation}.

If $\I$ is a nonempty finite set, \label{notation:I} a \emph{word} over $\I$ is defined to be a finite sequence $(i_k)_{k=1}^n$ of elements of $\I$, \label{notation:icylnd} which we write in the form $\iii=i_1 i_2 \cdots i_n$. \label{notation:ijk} We define the \emph{length} of $\iii=i_1\cdots i_n$, denoted $|\iii|$, \label{notation:|i|} to be the integer $n$. We denote the set of all words over $\I$ by $\Gamma_\I$, \label{notation:Gamma_I} and we define the \emph{concatenation} of two words $\iii=i_1\cdots i_n$, $\jjj=j_1\cdots j_m$ by $\iii\jjj:=i_1\cdots i_n j_1\cdots j_m$. It will at times be useful to us to regard  $\Gamma_\I$ as a semigroup equipped with this operation. 
By contrast we will use the notation $\underline{\iii}$ to indicate an \emph{infinite} sequence over the alphabet $\I$, i.e. an element of $\I^\N$. Given $\underline{\iii}=(i_k)_{k=1}^\infty \in \I^\N$ \label{notation:underlinei} and $n \geq 1$ we define $\underline{\iii}|_n:=i_1\cdots i_n \in \Gamma_\I$. \label{notation:i_n} We equip $\I^\N$ with the product topology (with respect to which it is compact and metrisable) and define the \emph{shift transformation} $\sigma \colon \I^\N \to \I^\N$ by $\sigma[(i_k)_{k=1}^\infty]:=(i_{k+1})_{k=1}^\infty$, which is a continuous transformation. We let $\mathcal{M}_\sigma(\I^\N)$ \label{notation:M_sigma} denote the set of all shift-invariant Borel probability measures on $\I^\N$. 
For each  $\jjj \in \Gamma_\I$ we define $[\jjj]$ to be the set $\{\underline{\iii} \in \I^\N \colon \underline{\iii}|_{|\jjj|}=\jjj\}$, which we call the \emph{cylinder set} corresponding to the word $\jjj$. 

If an affine iterated function system $(T_i)_{i \in \I}$ or a tuple of linear maps $(A_i)_{i \in \I}$ is understood, then for each $\iii=i_1\cdots i_n \in \Gamma_\I$ we write $T_\iii := T_{i_1}\cdots T_{i_n}$ and $A_\iii:=A_{i_1}\cdots A_{i_n}$. We define the \emph{coding map} of a given affine iterated function system $(T_i)_{i \in \I}$ acting on $\R^d$ to be the continuous function $\Pi \colon \I^\N \to \R^d$ which  satisfies
\[\Pi[(i_k)_{k=1}^\infty]:=\lim_{n \to \infty} T_{i_1}T_{i_2}\cdots T_{i_n}v\]
for every $(i_k)_{k=1}^\infty \in \I^\N$ and $v \in \R^d$.

\subsection{Thermodynamic formalism for linear images of self-affine sets}

To motivate the results to come, we will first recall some established methods from the dimension theory of self-affine sets. We begin with the dimension bound $\dimh X \leq \dimaff \A$ introduced in Falconer's article \cite{Fa88}. Let $(T_i)_{i \in \I}$ be an affine iterated function system with linearisation $\A=(A_i)_{i \in \I}$. Since the Hausdorff dimension of the attractor $X$ is by definition equal to the infimal value of $s \geq 0$ such that for every $\delta>0$ the quantity
\[\mathcal{H}_\delta^s(X):=\inf\left\{\sum_{n=1}^\infty (\diam U_n)^s \colon X\subseteq \bigcup_{n=1}^\infty U_n \text{ and }\sup_{n \geq 1} \diam U_n \leq \delta\right\}\]
is finite, in order to bound the Hausdorff dimension of $X$ one seeks a natural covering of the attractor by sets of small diameter. If $\mathbf{B}\subset \R^d$ is a closed ball containing $X$, then clearly
\begin{equation}\label{eq:falconer-cover}X=\bigcup_{|\iii|=n} T_\iii X \subseteq \bigcup_{|\iii|=n} T_\iii \mathbf{B}\end{equation}
for every $n \geq 1$, and by taking $n$ large enough we may arrange for this covering of $X$ to consist of sets of diameter at most $\delta$. In general the sets $T_\iii \mathbf{B}$ are ellipsoids, which makes their diameters an inexact parameter with which to measure their $s$-dimensional volume, and to compensate for this inefficiency Falconer argued as follows. Recall that the \emph{singular values} of $A \in \End(\R^d)$ 
\label{notation:EndW} are defined to be the non-negative square roots of the eigenvalues of the positive semidefinite transformation $A^TA$, and we denote these by $\sigma_1(A)\geq \sigma_2(A)\geq \cdots \geq \sigma_d(A)$, \label{notation:sing.values} repeating in the case of multiple eigenvalues. The image of the closed unit ball of $\R^d$ under $A$ is a closed ellipsoid with semiaxes $\sigma_1(A),\ldots,\sigma_d(A)$. We now define
\[\varphi^s(A):=
\sigma_1(A)\cdots \sigma_{\lfloor s\rfloor}(A) \sigma_{\lceil s\rceil}^{s-\lfloor s\rfloor}(A)\]
for all $s \in [0,d]$; \label{notation:varphis} the inequality $\varphi^s(AB)\leq \varphi^s(A)\varphi^s(B)$ is shown in \cite{Fa88} to hold for all $A,B \in \End(\R^d)$. One may show that an ellipsoid of the form $T_\iii \mathbf{B}$ may be covered with cubes $C_1,\ldots,C_k$ of suitable diameters in such a way that $\sum_{i=1}^k (\diam C_i)^s=O(\varphi^s(A_\iii))$. This being shown, from \eqref{eq:falconer-cover} one obtains directly the bound
\[\dimh X \leq \min \left\{d, \inf\left\{s \in [0,d] \colon \sum_{n=1}^\infty \sum_{|\iii|=n} \varphi^s(A_\iii)<\infty\right\} \right\}\]
with the usual convention $\inf \emptyset=\infty$. The expression on the right-hand side of this inequality is defined to be the affinity dimension of $\A$ and is denoted $\dimaff \A$. This quantity is independent of the translation parts $u_i \in \R^d$ of the affinities $T_ix \equiv A_ix + u_i$.

The argument so far generalises to linear images $QX$ of the attractor in a surprisingly straightforward way. If a linear map $Q \in \End(\R^d)$ is specified, 
then for every $n \geq 1$
\[QX = \bigcup_{|\iii|=n} QT_\iii \subseteq \bigcup_{|\iii|=n} QT_\iii \mathbf{B}.\]
The sets $QT_\iii \mathbf{B}$ are again  closed ellipsoids (though in general degenerate) and may be covered by following Falconer's strategy to obtain the almost identical bound
\[\dimh QX \leq \min\left\{\rank Q, \inf\left\{s \in [0,d] \colon \sum_{n=1}^\infty \sum_{|\iii|=n} \varphi^s(QA_\iii)<\infty\right\}\right\}.\]
This suggests the possibility of defining the latter quantity to be the ``$Q$-projected affinity dimension'' of $\A$ and developing a theory of projections of self-affine sets along the same lines 
as the theory of self-affine sets themselves as sketched above.
However, the required arguments diverge significantly when it comes to implementing the lower bound.

To show that $\dimh X = \dimaff \A$ for Lebesgue almost every system with linearisation $\A$, Falconer proceeds in \cite{Fa88} to construct for given $s \in(0,\dimaff \A)$ a measure $\mu$ on $\I^\N$ with the property $\mu([\iii])\leq C\varphi^s(\iii)$ for all $\iii \in \Gamma_\I$ and some constant $C>0$. By integration over the space of translation parameters it is shown that for Lebesgue almost every choice of translation parameter, the double integral $\int_{\I^\N}\int_{\I^\N} \|\Pi_*\mu(\underline{\iii})-\Pi_*\mu(\underline{\jjj})\|^{-s}d\mu(\underline{\iii})d\mu(\underline{\jjj})$ converges, implying that the Hausdorff dimension (\emph{i.e.} the essential infimum of the lower local dimension) of $\Pi_*\mu$ is at least $s$. Hence the dimension of the corresponding attractor must also be at least $s$, and by applying this argument for a countable dense set of $s \in (0,\dimaff\A)$ the result is proved. While in Falconer's original argument the measure $\mu$ is constructed from an outer measure, later extensions of the argument in \cite{JoPoSi07,Ka04} took $\mu$ to be a shift-invariant ergodic measure on $\I^\N$ such that
\begin{equation}\label{eq:classical-0}
\lim_{n \to\infty} \frac{1}{n} \log \left(\frac{ \varphi^s(A_{\underline{\iii}|_n})}{\mu([\underline{\iii}|_n])}\right)=0    
\end{equation}
$\mu$-a.e, where $s$ is the affinity dimension, and where we suppose for the moment that $s< d$. (Via the subadditive ergodic theorem and the Shannon-McMillan-Breiman theorem, the more usual form of the above expression is
\[h(\mu) + \lim_{n \to \infty} \frac{1}{n}\int_{\I^\N} \log \varphi^s(A_{\underline{\iii}|_n})d\mu(\underline{\iii})=0,\]
where $h(\mu)$ is the Kolmogorov-Sinai entropy of $\mu$, \label{notation:hmu} but we shall prefer the former for reasons which will later become evident.) This suggests the possibility of likewise constructing a measure $\mu$ such that either $\mu([\iii]) \leq C\varphi^s(QA_\iii)$ for all $\iii$ as in \cite{Fa88}, or such that $\mu$ is shift-invariant and for almost every $\underline{\iii}$,
\begin{equation}\label{eq:q-nonsense}
 \lim_{n \to\infty} \frac{1}{n} \log \left(\frac{ \varphi^s(QA_{\underline{\iii}|_n})}{\mu([\underline{\iii}|_n])}\right)=0   
\end{equation}
along the lines of \cite{JoPoSi07,Ka04}, followed by a naturally modified version of the multiple integration argument applied in \cite{Fa88}. (When the affinity dimension is equal to $d$ one instead works with a measure such that \eqref{eq:classical-0} is non-negative for $s=d$, and a similar consideration would apply to \eqref{eq:q-nonsense} when $s=\rank Q$.)

However, such a direct argument cannot possibly work. Theorem \ref{intro:not-exact-dimensional} already indicates that we cannot in general expect the essential supremum of the local dimension of a projected invariant measure to equal the essential infimum of the same. Since the double integral mentioned earlier estimates the latter and not the former, it will in general underestimate the dimension of the attractor and cannot be expected to produce sharp results. A more precise articulation of this obstacle is that the limit \eqref{eq:q-nonsense} need not be a.e.\ constant even when the measure is ergodic. These considerations will oblige us to work with local rather than global dimension properties of measures. 
Furthermore, whereas the limit \eqref{eq:classical-0} is accessible to the subadditive ergodic theorem in view of the subadditivity relation $\log\varphi^s(AB) \leq \log \varphi^s(A)+\log\varphi^s(B)$, no such inequality holds in general for the map $A \mapsto \varphi^s(QA)$, making the very existence of the limit \eqref{eq:q-nonsense} less obvious. This lack of subadditivity likewise prevents Falconer's arguments in \cite[\S4]{Fa88} from being directly extended to the construction of a non-invariant measure satisfying $\mu([\iii])=O(\varphi^s(QA_\iii))$. To obtain our lower bound, therefore, we will need to first conduct a thorough investigation of the shift-invariant measures appropriate to our problem and of the inherent behaviour of expressions such as \eqref{eq:q-nonsense}. We will need in particular to establish the existence of certain key limits despite the \emph{a priori} absence of the subadditivity  properties found in the classical theory.

We will proceed by embedding the above-described objects in a broader thermodynamic formalism as follows. We first recall the analogous constructions which have been established for the purpose of studying the attractor itself. For each $s \in [0,d]$ one may define the \emph{pressure} of $\A$ at $s$,
\begin{equation*}\label{notation:P(A,s)}
P(\A,s):=\lim_{n \to \infty}\frac{1}{n}\log \sum_{|\iii|=n} \varphi^s(A_\iii),\end{equation*}
a definition which dates back to \cite{Fa88}. For fixed arbitrary $\A \in \GL_d(\R)^\I$ this function is continuous with respect to $s$, and if $\A$ is contracting then it is also strictly decreasing; if it has a zero in $[0,d]$ then it is unique and it is precisely the affinity dimension of $\A$. It is also possible to demonstrate that
\begin{equation}\label{eq:classical-2}
P(\A,s)=\sup_{\mu \in \mathcal{M}_\sigma(\I^\N)} \left[h(\mu)+\lim_{n \to \infty}\frac{1}{n}\int_{\I^\N} \varphi^s(A_{\underline{\iii}|_n}) d\mu(\underline{\iii})\right]\end{equation}
for every $s \in [0,d]$, and this formulation is fundamental in establishing a range of properties of the affinity dimension itself and of high-dimensional measures on the attractor (see e.g. \cite{BoMo18,FeSh14,MoSe23a}). Hereafter we will refer to measures attaining the supremum \eqref{eq:classical-2} as \emph{$\varphi^s$-equilibrium states}; if $\mu \in \mathcal{M}_\sigma(\I^\N)$ is ergodic, its \emph{Lyapunov dimension} with respect to $\A$, denoted $\dimlyap (\mu,\A)$, \label{notation:dimlyapAmu} may be defined as the unique $s$ such that \eqref{eq:classical-0} holds $\mu$-a.e, or as $d$ if the limit is a.e. non-negative for all $s\in [0,d]$.\

Our first main result extends these notions to the context of projections of self-affine sets, as follows.
\begin{theorem}
\label{th:pressure-detail}
Let $\I$ be a nonempty finite set, let $d\geq 1$ and let $\A=(A_i)_{i \in \I} \in \GL_d(\R)^\I$. Then:
\begin{enumerate}[(a)]
\item\label{it:exist-subadditive}
\emph{Approximate subadditivity.}
For every $Q \in \End(\R^d)$ there exists $C>0$ depending only on $Q$ and $\A$ such that the sequence
\[\left( \log\left(C  \sum_{|\iii|=n} \varphi^s(QA_\iii)\right)\right)_{n=1}^\infty\]
is subadditive for every $s \in [0, \rank Q]$, and in particular the limit \label{notation:PQ(A,s)}
\[P_Q(\A, s):=\lim_{n \to \infty}\frac{1}{n}\log \sum_{|\iii|=n} \varphi^s(QA_\iii) \]
exists for all $s \in [0, \rank Q]$.

\item \label{it:thermodynamic}
\emph{Variational properties.}
For every $Q \in \End(\R^d)$ and every invariant measure $\mu \in \mathcal{M}_\sigma(\I^\N)$, for $\mu$-a.e.\ $\underline{\iii} \in \I^\N$ the limit
\[\lim_{n \to \infty} \frac{1}{n} \log \left(\frac{\varphi^s(QA_{\underline{\iii}|_n})}{\mu([\underline{\iii}|_n])}\right)\]
exists for all $s \in [0,\rank Q]$. For fixed $s \in [0,\rank Q]$
we moreover have
\begin{equation}\label{eq:variational-main}P_Q(\A,s)=\sup_{\mu \in \mathcal{M}_\sigma(\I^\N)} {\ess\sup}_{\mu,\underline{\iii}} \lim_{n \to \infty} \frac{1}{n} \log \left(\frac{\varphi^s(QA_{\underline{\iii}|_n})}{\mu([\underline{\iii}|_n])}\right).\end{equation}
Given nonzero $Q \in \End(\R^d)$ and $s \in [0,\rank Q]$, we call an ergodic measure a \emph{$(\varphi^s,Q)$-equilibrium state for $\A$} if it attains the above supremum. This class of measures has the following properties:
\begin{enumerate}[(i)]
    \item \label{it:eqm-at-least-one}
    For fixed nonzero $Q \in \End(\R^d)$ and $s \in [0,\rank Q]$ the number of $(\varphi^s,Q)$-equilibrium states is at least one. 
    \item \label{it:eqm-finitely-many}%thanks, well spotted
    For every $s \in (0,d]$ the set
    \[\bigcup_{\substack{Q \in \End(\R^d)\\ \rank Q \geq s}} \left\{\mu\in \mathcal{M}_\sigma(\I^\N) \colon \mu\text{ is a }(\varphi^s,Q)\text{-equilibrium state for }\A\right\}\]
    is finite, and has cardinality not greater than 
 ${d\choose {\lfloor s\rfloor}}{d\choose{ \lceil s\rceil}}$ if $s$ is non-integer, or ${d \choose s}$ if $s$ is integer.
 \item \label{it:eqm-gibbs}
 If $\mu$ is a $(\varphi^s,Q)$-equilibrium state then it satisfies a Gibbs inequality of the form 
\[C^{-1}\leq \frac{\mu([\iii])}{e^{-|\iii|P(\Psi)}\Psi(\iii)} \leq C\]
for some constant $C\geq 1$ and function $\Psi \colon \Gamma_\I \to (0,\infty)$  such that $\Psi(\iii\jjj)\leq \Psi(\iii)\Psi(\jjj)$ for all finite words $\iii,\jjj$. 
\item \label{it:eqm-psi-mixing}
If $\mu$ is a $(\varphi^s,Q)$-equilibrium state then we may write $\mu=\frac{1}{m}\sum_{k=0}^{m-1}(\sigma^k)_*\nu$ for some $\sigma^m$-invariant Borel probability measure $\nu$ on $\I^\N$ satisfying the following \emph{$\psi$-mixing condition} with respect to $\sigma^m$:
\begin{equation*}%\label{eq:sQ-psi-mixing}
\lim_{n \to \infty} \sup_{\substack{\iii,\jjj \in \Gamma_\I\\ m\text{ divides }|\iii|, |\jjj|}} \left|\frac{\nu([\iii]\cap \sigma^{-|\iii|-nm}[\jjj])}{\nu([\iii])\nu([\jjj])}-1\right|=0.\end{equation*}
Here $1 \leq m\leq {d\choose \lfloor s\rfloor}{d\choose \lceil s\rceil}$ if $s$ is non-integer, or $1 \leq m \leq {d \choose s}$ if $s$ is integer.

Additionally, if $G\leq \GL_d(\R)$ denotes the Zariski closure of the semigroup $\{A_\iii \colon \iii \in \Gamma_\I\}$ and $G^o$ \label{notation:Go} the Zariski-connected component of $G$ which contains the identity, if there exists an integer $n$ such that $\bigcup_{|\iii|=n}G^oA_\iii =G$, then $\mu$ itself is $\psi$-mixing and we may take $m=1$ in the preceding description. In all cases $m$ divides the index of $G^o$ in $G$.
\end{enumerate}

\item\label{it:sublevel-algebraic}
\emph{Algebraicity of sub-level sets with respect to $Q$.}
For every $s \geq 0$ and $t \in \R$ define
\[\mathcal{V}_{s,t}:=\left\{Q \in \End(\R^d) \colon P_Q(\A, s)\leq t\right\}.\]
Then each $\mathcal{V}_{s,t}$ is an affine subvariety of
$\End(\R^d)$ which satisfies $\mathcal{V}_{s,t}A_i=\mathcal{V}_{s,t}$ for every $i \in \I$. Furthermore
\[\left|\left\{\mathcal{V}_{s,t} \colon s \in [0,d], t \in \R\right\}\right| \leq \prod_{k=1}^d \left(1+2^{{d\choose k}}\right).\]
\item\emph{Continuity properties with respect to $s$.}\label{it:lipschitz-convexity}
For fixed $Q \in \End(\R^d)$ the function $s \mapsto P_Q(\A, s)$ is Lipschitz continuous on the interval $[0, \rank Q]$ and is convex on every interval of the form $[\ell-1,\ell] \subseteq [0,\rank Q]$ with $\ell$ integer. If every $A_i$ is contracting with respect to some norm on $\R^d$ then additionally $s \mapsto P_Q(\A, s)$ is strictly decreasing on $[0, \rank Q]$.
\item \label{it:monotone-A}
\emph{Strict monotone dependence on $\A$.} Suppose that $\emptyset \subset \J\subset \I$, and define $\A'=(A_i)_{i \in \J}$.  Then for every $Q \in \End(\R^d)$ and $s \in [0,\rank Q]$ we have $P_Q(\A',s)<P_Q(\A,s)$. 
\item\label{it:monotone-ker-q} 

\emph{Monotone dependence on $\ker Q$.} If $Q_1, Q_2 \in \End(\R^d)$ satisfy $\ker Q_1 \subseteq \ker Q_2$ then $P_{Q_1}(\A, s) \geq P_{Q_2}(\A, s)$
for all $s \in [0, \rank Q_2]$. In particular, if $\ker Q_1 = \ker Q_2$ then $P_{Q_1}(\A, s) = P_{Q_2}(\A, s)$  for all $s \in [0, \rank Q_2]=[0, \rank Q_1]$.
\end{enumerate}
\end{theorem}

\emph{Contextual remarks.} 1. In the case where $Q$ is the identity, Theorem \ref{th:pressure-detail} recovers a range of results from works such as \cite{BoMo18,Fa88,FeKa11,Ka04,KaMo18,Mo18a,Mo21,Pi20} dealing with the existence of the limit, continuity in $s$, the properties of $\varphi^s$-equilibrium states and strict monotonicity. In the situation where $Q$ is non-invertible our potentials in general fail to be subadditive, introducing complications (and consequences) not encountered in any of those works: even the existence of the various limits in Theorem \ref{th:pressure-detail}\ref{it:exist-subadditive}--\ref{it:thermodynamic}, which in the $Q=\id$ case follows straightforwardly from Fekete's lemma and the subadditive ergodic theorem, requires further non-trivial analysis of potentials in our case. \\[3pt] 
2. A second new difficulty as compared to the previous works arises as follows. The analyses of $\varphi^s$-equilibrium states in \cite{BoMo18,FeKa11,KaMo18,Mo21} rely on the fact that in that context (i.e. when $Q=\id$), the pressure $P(\A,s)$ and the ergodic properties of $\varphi^s$ can be shown to depend only on the reductivisation of $\A$ (i.e.\ on irreducible block-diagonals appearing in relevant exterior power representations). However, in our analysis of $P_Q(\A,s)$ and of its associated equilibrium states this reduction is unavailable since the removal of off-diagonal blocks in general alters the value of $P_Q(\A,s)$. This can be seen in simple examples such as $\A=(A_1,A_2)$ with
\begin{equation*}%\label{eq:such-as}
A_1=\begin{pmatrix}1&1 \\ 0&1\end{pmatrix},\qquad A_2=\begin{pmatrix}1&1 \\ 0&2\end{pmatrix},\qquad Q:=\begin{pmatrix}1&0\\0&0\end{pmatrix}.\end{equation*}
Here for example $P_Q(\A,1)=P(\A,1)=\log 3$, but if $\A'=(A_1',A_2')$ is derived from $\A$ by setting the upper-right matrix entries to zero, i.e.
\begin{equation*}%\label{eq:such-as}
A_1'=\begin{pmatrix}1&0 \\ 0&1\end{pmatrix},\qquad A_2'=\begin{pmatrix}1&0 \\ 0&2\end{pmatrix},\end{equation*}
then $P_Q(\A',1)=\log 2$. As a result of this technical obstruction $(\varphi^s,Q)$-equilibrium states are not in general accessible to the classic approach of \cite{BoMo18,FeKa11,KaMo18,Mo21} via removal of off-diagonal blocks. \\[3pt]
3. As will be seen in \S\ref{se:technical-core}, the proof of Theorem \ref{th:pressure-detail} %is fundamentally reliant on 
makes use of the fact that the linearisations $(A_i)_{i \in \I}$ generate a subsemigroup of a linear group. This makes it an %challenging
open problem to extend this result to the context of non-invertible affine iterated function systems such as those studied in \cite{barany.non.inv,kaenmaki.non.inv}, where group-theoretic techniques are not available. For a further illustration of some subtleties which can arise in analogues of Theorem \ref{th:pressure-detail} for non-invertible affinities, see \cite[Proposition 4]{Mo19b}. 

\bigskip
\emph{Technical remarks.} 1. Whereas Theorem \ref{th:pressure-detail}\ref{it:thermodynamic} guarantees the pointwise a.e.\ existence of the limit $\frac{1}{n} \log \varphi^s(QA_{\underline{\iii}|_n})$ for every invariant measure $\mu$, this limit can in general be nonconstant even when  $\mu$ is ergodic. It is precisely this phenomenon which underlies the construction of non-exact-dimensional measures in Theorem \ref{intro:not-exact-dimensional}. The failure of the limit to be constant a.e.\ arises from the possibility that the distribution of Oseledets spaces of $\A=(A_i)_{i \in \I}$ with respect to $\mu$ may in general give positive measure to the set of all subspaces contained in a given hyperplane. It is in general a subtle matter to guarantee that the limit is constant a.e.: if $\mu$ is Bernoulli then this follows from  Furstenberg--Kifer's \cite[Theorem 3.9]{furstenberg-kifer}, and under additional algebraic hypotheses on $\A$ %(pinching and twisting)
this is also the case when $\mu$ has local product structure by the work of Avila--Viana (see \cite[Proposition 5.1]{avila-viana.br} and also \cite[Theorem A.1]{avila-viana.acta}). The contemporaneous work \cite{FeXi25} also ensures this for supermultiplicative measures.\\[3pt] 

2. The above phenomenon is also responsible for a key new distinction between the notions of $\varphi^s$- and  $(\varphi^s,Q)$-equilibrium states: we allow the former class of measure to be non-ergodic (in which case all of its ergodic components must be $\varphi^s$-equilibrium states) but we require $(\varphi^s,Q)$-equilibrium states to be ergodic by definition, because the presence of an essential supremum as opposed to an integral or an a.e.\ constant pointwise limit implies that a non-ergodic measure attaining the supremum \eqref{eq:variational-main} may  not have any ergodic components which attain the same supremum.\\ [3pt]
3. Theorem \ref{th:pressure-detail}\ref{it:thermodynamic}\ref{it:eqm-psi-mixing} demonstrates that $(\varphi^s,Q)$-equilibrium states which are totally ergodic must also be $\psi$-mixing, and describes precisely the situation which arises when total ergodicity fails. In this respect it generalises earlier results of \cite{Mo21,Pi20} from the context of $\varphi^s$-equilibrium states to that of $(\varphi^s,Q)$-equilibrium states. The condition $\bigcup_{|\iii|=n} A_\iii G^o =G$ is very mild, and can fail only if there exists a Zariski-closed normal subgroup $H$ of  $G$ such that $G /H$ is a finite cyclic group and $\{A_i \colon i \in \I\}$ is contained in a single coset of $H$: see Lemma \ref{le:breuillard-sert-cyclic} below. The latter situation can always be circumvented by ``re-coding'' $(T_i)_{i \in \I}$ while retaining the same attractor $X$, such as by considering $(T_\iii)_{|\iii|=p}$ in place of $(T_i)_{i \in \I}$.\\ [3pt]
4. Where the term ${d\choose k}$ arises in Theorem \ref{th:pressure-detail}, it does so as a simple upper bound for the number of distinct composition factors of $\wedge^k\R^d$ as an $\R[G^o]$-module, \label{notation:grp.alg} where $G^o$ is the identity component of the Zariski closure of the semigroup generated by $\A$: see \S\ref{se:technical-core} for a description of these more precise bounds. In general we do not expect the bounds in \ref{it:thermodynamic}\ref{it:eqm-finitely-many} and \ref{it:sublevel-algebraic} to be sharp, and the optimal bound in the latter case in particular is likely to be significantly smaller.

\subsection{Almost sure dimension formulas}

Having defined the projected singular value pressure and established its fundamental properties, we may now properly describe the resulting bounds for the dimensions of sets and measures as follows:
\begin{definition}
Let $\A=(A_i)_{i\in \I} \in \GL_d(\R)^\I$ and suppose that $\max_{i \in \I}\threebar{A_i}<1$ for some norm $\threebar{\cdot}$ on $\R^d$. For each $Q \in \End(\R^d)$ we define the \emph{$Q$-projected affinity dimension of $\A$} to be the quantity \label{notation:dimaffQ}
\[\dimaffQ \A:=\inf \left\{s\in [0,\rank Q] \colon P_Q(\A,s)<0\right\}\]
if this set is nonempty, and $\rank Q$ otherwise. Additionally, given any $\mu \in \mathcal{M}_\sigma(\I^\N)$ we further define the \emph{$Q$-projected local Lyapunov dimension of $\A$ and $\mu$ at $\underline{\iii}$} to be \label{notation:dimlyapQ}
\[\dimlyapQ (\A,\mu)(\underline{\iii}):=\inf\left\{s\in [0,
\rank Q] \colon \lim_{n \to \infty}  \frac{1}{n}\log \left(\frac{\varphi^s(QA_{\underline{\iii}|_n})}{\mu([\underline{\iii}|_n])}\right)<0\right\} \]
for every $\underline{\iii} \in \I^\N$ such that the limit exists for all $s \in [0,d]$ and the set is nonempty. If instead the limit is well-defined and non-negative for every $s \in [0,\rank Q]$ then we define this quantity to be $\rank Q$.
\end{definition}
It follows directly from Theorem \ref{th:pressure-detail} that $\dimaffQ \A$ is the unique zero in $[0,\rank Q]$ of the strictly decreasing function $s \mapsto P_Q(\A,s)$ when such a zero exists, and is otherwise equal to $\rank Q$. Theorem \ref{th:pressure-detail} also guarantees that the Lyapunov dimension is well-defined pointwise a.e. The alternative characterisations

\[\dimaffQ \A = \min \left\{ d, \inf\left\{s \in [0,d] \colon \sum_{n=1}^\infty \sum_{|\iii|=n} \varphi^s(QA_\iii)<\infty\right\} \right\},\]
\[\dimlyapQ (\A,\mu)(\underline{\iii})=\max \left\{0, \sup\left\{s\in [0,d] \colon \lim_{n \to \infty}  \frac{1}{n}\log \left(\frac{\varphi^s(QA_{\underline{\iii}|_n})}{\mu([\underline{\iii}|_n])}\right)\geq 0\right\} \right\} \]
are easily derived, with the usual convention that $\sup \emptyset=-\infty$, and assuming again that the relevant limits exist at $\underline{\iii}$. When $Q$ is taken to be the identity (or indeed any invertible linear map) these quantities reduce to the classical notions of affinity dimension and Lyapunov dimension respectively.% or to the ambient dimension $d$, whichever is smaller.

The following result is a simple direct consequence of Theorem \ref{th:pressure-detail}\ref{it:sublevel-algebraic}:
\begin{proposition}\label{pr:dimaffq-subvarieties}
Let $\I$ be a nonempty finite set, let $d \geq 1$, let $\A=(A_i)_{i \in \I}\in \GL_d(\R)^\I$ and suppose that $\max_{i \in \I}\threebar{A_i}<1$ for some norm $\threebar{\cdot}$ on $\R^d$. Then each level set
\[\mathcal{W}_t:=\left\{Q\in \End(\R^d) \colon \dimaffQ \A\leq t\right\}\]
is an algebraic variety which satisfies $\mathcal{W}_tA_i=\mathcal{W}_t$ for every $i \in \I$, and the number of distinct level sets is not greater than $m:=\prod_{k=1}^d (1+2^{d \choose k})$. Moreover, denoting these by $\mathcal{W}_{t_1} \supset \ldots \supset \mathcal{W}_{t_m}$, we have $\dim \mathcal{W}_{t_j}>\dim \mathcal{W}_{t_{j+1}}$ for every $j=1,\ldots,m-1$.
\end{proposition}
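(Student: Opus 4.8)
\emph{Proof strategy.} The plan is to reduce everything to the affine subvarieties $\mathcal{V}_{s,t}$ supplied by Theorem~\ref{th:pressure-detail}\eqref{it:sublevel-algebraic}. Fix $Q\in\End(\R^d)$. Under the contractivity hypothesis, $s\mapsto P_Q(\A,s)$ is continuous and strictly decreasing on $[0,\rank Q]$ by Theorem~\ref{th:pressure-detail}\eqref{it:lipschitz-convexity}, while $P_Q(\A,s)=-\infty$ for $s>\rank Q$ since $\varphi^s(QA_\iii)=0$ whenever $s>\rank(QA_\iii)=\rank Q$. From this and the definition of $\dimaffQ\A$ one checks directly that $\dimaffQ\A\le t$ holds precisely when $P_Q(\A,s)\le 0$ for every $s>t$ (the edge case $P_Q(\A,\rank Q)>0$, in which $\dimaffQ\A=\rank Q$, being handled correctly by this criterion); that is,
\[\mathcal{W}_t=\bigcap_{s>t}\mathcal{V}_{s,0}.\]
Since each $P_Q(\A,\cdot)$ is non-increasing, $s\mapsto\mathcal{V}_{s,0}$ is non-decreasing for inclusion, and by Theorem~\ref{th:pressure-detail}\eqref{it:sublevel-algebraic} it takes only finitely many values; hence the totally ordered finite family $\{\mathcal{V}_{s,0}:s>t\}$ has a least element, attained by some $s_0>t$, and this least element is the intersection. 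So in fact $\mathcal{W}_t=\mathcal{V}_{s_0(t),0}$ for a suitable $s_0(t)>t$.

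Given this, the first three assertions follow formally. Each $\mathcal{W}_t$ is one of the $\mathcal{V}_{s,0}$, hence an affine subvariety, and it inherits $\mathcal{W}_tA_i=\mathcal{W}_t$ from $\mathcal{V}_{s,0}A_i=\mathcal{V}_{s,0}$ (equivalently, this follows from $\mathcal{W}_t=\bigcap_{s>t}\mathcal{V}_{s,0}$ and bijectivity of $A_i$). The distinct level sets form a subcollection of $\{\mathcal{V}_{s,0}:s\ge 0\}$, which --- noting $\mathcal{V}_{s,0}=\End(\R^d)=\mathcal{V}_{0,t}$ for $s>d$ and $t\ge\log|\I|$ --- is contained in $\{\mathcal{V}_{s,t}:s\in[0,d],\,t\in\R\}$, a set of cardinality at most $m=\prod_{k=1}^{d}(1+2^{{d\choose k}})$ by Theorem~\ref{th:pressure-detail}\eqref{it:sublevel-algebraic}. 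Finally $(\mathcal{W}_t)_{t\ge 0}$ is totally ordered by inclusion since $\dimaffQ\A\le t$ is monotone in $t$, so its distinct members form a decreasing chain $\mathcal{W}_{t_1}\supsetneq\cdots\supsetneq\mathcal{W}_{t_p}$ with $p\le m$; one also notes that $\mathcal{W}_{t_1}=\End(\R^d)$ is irreducible, so the first inclusion already drops the dimension.

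What remains --- the one genuinely non-formal point --- is to show that consecutive members of the chain have strictly decreasing dimension, which does \emph{not} follow from the proper inclusions alone (a line is a proper subvariety of a union of two lines). Here I would use the explicit description of the $\mathcal{V}_{s,0}$ obtained inside the proof of Theorem~\ref{th:pressure-detail}\eqref{it:sublevel-algebraic}: each is cut out by conditions ``$\wedge^kQ$ vanishes on $W_k$'' for $k=1,\dots,d$, with $W_k$ an $\A$-invariant subspace of $\wedge^k\R^d$ (the finitely many admissible $W_k$ being precisely the source of the factor $1+2^{{d\choose k}}$), and passing to a strictly smaller member of the chain strictly enlarges at least one $W_k$. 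The plan is then to prove that each such ``generalised Schubert'' variety is irreducible --- equivalently, using the $\A$-invariance, that $\langle A_i:i\in\I\rangle$ permutes its top-dimensional components transitively and it has no components of smaller dimension --- so that a proper closed subvariety automatically has strictly smaller dimension and the drop is forced. (A fallback route: if $\mathcal{W}_{t_j}$ and $\mathcal{W}_{t_{j+1}}$ shared a dimension $r$, then since $\mathcal{W}_{t_{j+1}}$ is invariant and cannot contain any $\langle A_i\rangle$-translate of an $r$-dimensional component of $\mathcal{W}_{t_j}$ that it misses, the locally closed set $\mathcal{W}_{t_j}\setminus\mathcal{W}_{t_{j+1}}=\{Q:\dimaffQ\A=t_j\}$ would be dense in such a component; one then contradicts this using the upper semicontinuity of $Q\mapsto\dimaffQ\A$, itself a consequence of the approximate subadditivity of Theorem~\ref{th:pressure-detail}\eqref{it:exist-subadditive} which exhibits $P_Q(\A,s)$ as an infimum over $n$ of functions continuous in $Q$, together with the explicit form of the $W_k$.) The main obstacle is exactly this irreducibility/dimension statement for the generalised Schubert varieties, i.e.\ understanding how $\dim\{Q:\wedge^kQ|_{W_k}=0\}$ depends on the invariant subspace $W_k$; everything else is routine given Theorem~\ref{th:pressure-detail}.
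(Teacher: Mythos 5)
Your reduction of the algebraicity, the $A_i$-invariance and the counting bound to Theorem~\ref{th:pressure-detail}\eqref{it:sublevel-algebraic}, via the identity $\mathcal{W}_t=\bigcap_{s>t}\mathcal{V}_{s,0}$ and the observation that this intersection stabilises at a single $\mathcal{V}_{s_0,0}$, is correct, and is all that the paper itself supplies (it states the proposition without proof as a ``simple direct consequence'' of that theorem). The problem is the final clause $\dim\mathcal{W}_{t_j}>\dim\mathcal{W}_{t_{j+1}}$, which you rightly isolate as the only non-formal point but then leave unproved. Worse, the strategy you propose --- irreducibility of the level sets --- cannot succeed: the level sets are reducible in general, with components of different dimensions, and a smaller level set can coincide with the union of the top-dimensional components of its predecessor.

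Concretely, take $d=3$, $|\I|=2$ and $A_i=\diag(a,b,c)$ for both $i$, with $1>a>\tfrac12>b>c>0$. If $Qe_1\neq 0$ then $\|QA_\iii\|\geq a^{|\iii|}\|Qe_1\|$, so $P_Q(\A,s)\geq \log(2a^s)>0$ on $[0,1]$ and hence $\dimaffQ\A\geq 1$, with equality precisely when $\rank Q=1$; if $Qe_1=0$ then $P_Q(\A,s)\leq\log 2+s\log b$ on $[0,1]$, so $\dimaffQ\A\leq \log 2/\log(1/b)<1$. Consequently $\mathcal{W}_t=\{Q: Qe_1=0\}$ for every $t\in[\log 2/\log(1/b),\,1)$, a linear subspace of dimension $6$, whereas $\mathcal{W}_1=\{Q:Qe_1=0\}\cup\{Q:\rank Q\leq 1\}$ is a strictly larger, \emph{reducible} level set whose extra component has dimension $5$. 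Thus two consecutive distinct level sets both have dimension $6$, and the ``Moreover'' clause fails as stated over $\End(\R^d)$: the truncation $\dimaffQ\A\leq\rank Q$ makes the rank stratification of $\End(\R^d)$ interfere with the pressure level sets. (The intended statement is presumably the one used in Theorem~\ref{intro:stratified} and Corollary~\ref{co:stratified}, where one intersects with a fixed Grassmannian $\Gr(k,d)$; but even there a strict dimension drop requires an analysis of the components of the resulting Schubert-type varieties which neither you nor the paper provides.) Your fallback argument does not repair this either: in the scenario to be excluded, $\mathcal{W}_{t_{j+1}}$ contains a top-dimensional component $C$ of $\mathcal{W}_{t_j}$ outright, so $\mathcal{W}_{t_j}\setminus\mathcal{W}_{t_{j+1}}$ is disjoint from $C$ rather than dense in it, and no semicontinuity contradiction arises. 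You should flag the final clause as requiring either a corrected statement or a substantially different argument.
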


By Theorem \ref{th:pressure-detail}\ref{it:monotone-ker-q} the value of $\dimaffQ \A$ depends only on the kernel of $Q$. The inequality $\dimaffQ \A<\dimaff \A$ is thus intimately connected with the existence of $\A$-invariant varieties in the Grassmannian of $\R^d$; we will explore this matter further in \S\ref{ss:examples} below. 
We have not attempted to optimise the bound $\prod_{k=1}^d (1+2^{d\choose k})$, which is presented only in order to demonstrate that there exists a finite \emph{a priori} bound in terms of $d$ alone. A sharp estimate is likely to be significantly smaller, perhaps even $o(2^d)$.  

We next extend the classical dimension estimates of \cite{Fa88,JoPoSi07} as follows:
\begin{theorem}\label{th:dimension-results}
Suppose that $\A=(A_i)_{i \in \I} \in \GL_d(\R)^\I$ satisfies $\max_{i \in \I} \threebar{A_i}<1$ for some norm $\threebar{\cdot}$ on $\R^d$, let $Q \in \End(\R^d)$ and let $\mu \in \mathcal{M}_\sigma(\I^\N)$. For every $\mathbf{v}=(v_i)_{i \in \I} \in (\R^d)^\I$ \label{notation:bfv} define an affine iterated function system  $(T_i^{\mathbf{v}})_{i \in \I}$ on $\R^d$ by $T_i^{\mathbf{v}}x\equiv A_ix+v_i$, \label{notation:T_iv} and let $X^{\mathbf{v}}$ \label{notation:Xv} denote the attractor of $(T_i^{\mathbf{v}})_{i \in \I}$ and $\Pi^{\mathbf{v}}\colon \I^\N \to \R^d$ the \label{notation:Piv} associated coding map. 
\begin{enumerate}[(a)]
    \item \label{it:dimension-upper-bounds}
For every $\mathbf{v} \in (\R^d)^\I$,
\[\dimbu QX^{\mathbf{v}} \leq \dimaffQ \A,\]
\label{notation:dimbu} and for $\mu$-a.e.\ $\underline{\iii}\in \I^\N$
\[\dimlocu ((Q\Pi^{\mathbf{v}})_* \mu, Q\Pi^{\mathbf{v}}(\underline{\iii})) \leq \dimlyapQ (\A,\mu)(\underline{\iii}).\]
    \item\label{it:dimension-lower-bounds}
If additionally $\max_{i,j \in \I \colon i \neq j} \threebar{A_i}+\threebar{A_j}<1$ then the following further properties hold. For Lebesgue a.e.\ $\mathbf{v} \in (\R^d)^\I$,
    \[\dimh QX^{\mathbf{v}}=\dimb QX^{\mathbf{v}}=\dimaffQ \A,\]
and for $\mu$-a.e.\ $\underline{\iii} \in \I^\N$,
    \[\dimloc ((Q\Pi^{\mathbf{v}})_*\mu, Q\Pi^{\mathbf{v}}(\underline{\iii})) = \dimlyapQ (\A,\mu)(\underline{\iii}).\]
If $P_Q(\A,\rank Q)>0$, then $QX^{\mathbf{v}}$ has positive $(\rank Q)$-dimensional Lebesgue measure for Lebesgue a.e.\ $\mathbf{v} \in (\R^d)^\I$.  If we define
\[\Upsilon:=\left\{\underline{\iii}\in \I^\N \colon  \liminf_{n \to \infty}\frac{1}{n}\log \left(\frac{\varphi^{\rank Q}(QA_{\underline{\iii}|_n})}{\mu([\underline{\iii}|_n])}\right)>0\right\}\]
and let $\mu_\Upsilon$ denote the restriction of $\mu$ to $\Upsilon$, then for Lebesgue a.e.\ $\mathbf{v} \in (\R^d)^\I$ the measure $(Q\Pi^{\mathbf{v}})_*\mu_\Upsilon$ is absolutely continuous with respect to Lebesgue measure on $\im Q$. 
\end{enumerate}
\end{theorem}
Here $\dimbu$ denotes upper box dimension. Taking $Q$ to be the identity map recovers the main results of \cite{Fa88,JoPoSi07} as special cases. The condition $\max_{i,j \in \I\colon i\neq j} \threebar{A_i}+\threebar{A_j}<1$ is a modern relaxation of Falconer's original condition $\max_i \|A_i\|<\frac{1}{3}$ which is presented in \cite{BaSiSo23}; it cannot be removed entirely: see \cite{Ed92,Mo23,SiSo02}. Although it is conventional in the literature to consider only the Euclidean norm $\|\cdot\|$, any norm on $\R^d$ may be used.

Theorem \ref{intro:stratified} follows directly from the combination of Theorem \ref{th:dimension-results} with Proposition \ref{pr:dimaffq-subvarieties}.
By a simple ``Fubini slicing'' argument we may also derive the following interesting corollary from Theorem \ref{th:dimension-results}. We retain the notation of Proposition \ref{pr:dimaffq-subvarieties}.
\begin{corollary}[Stratified Marstand-type projection theorem for self-affine sets]\label{co:stratified}
Let $\A=(A_i)_{i \in \I} \in \GL_d(\R)^\I$ be such that $\max_{i,j \in \I \colon i \neq j} \threebar{A_i}+\threebar{A_j}<1$ for some norm $\threebar{\cdot}$ on $\R^d$. Let $\mathcal{W}_{t_m} \subset \ldots \subset \mathcal{W}_{t_1}$ be the distinct level sets of $Q \mapsto \dimaffQ \A$ given by Proposition \ref{pr:dimaffq-subvarieties}. 
Then for every $j=1,\ldots,m$, for Lebesgue a.e.\ $\mathbf{u}\in (\R^d)^\I$ we have $\dimh QX^{\mathbf{u}}=t_j$ for Lebesgue a.e.\ $Q \in \mathcal{W}_{t_j}$. 
\end{corollary}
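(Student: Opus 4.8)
The plan is to obtain Corollary~\ref{co:stratified} from Theorem~\ref{th:dimension-results}\eqref{it:dimension-lower-bounds} by a Fubini--Tonelli slicing argument carried out in the product space $(\R^d)^{\I}\times\mathcal{W}_{t_j}$. Here ``Lebesgue almost every $Q\in\mathcal{W}_{t_j}$'' is to be understood with respect to the natural Radon measure carried by the algebraic variety $\mathcal{W}_{t_j}\subset\End(\R^d)$: namely the restriction to the regular locus of the top-dimensional irreducible components of $\mathcal{W}_{t_j}$ of the Hausdorff measure $\mathcal{H}^{d_j}$, where $d_j:=\dim\mathcal{W}_{t_j}$. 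This is a $\sigma$-finite Borel measure on $\mathcal{W}_{t_j}$, and its null sets include the singular locus and all lower-dimensional components.

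First I would pin down the strata. Since $\mathcal{W}_t=\{Q\colon\dimaffQ\A\le t\}$ and, by Proposition~\ref{pr:dimaffq-subvarieties}, there are only finitely many distinct sets $\mathcal{W}_t$, the function $Q\mapsto\dimaffQ\A$ takes only finitely many values; writing these in decreasing order as $t_1>\cdots>t_m$ one checks directly that the distinct level sets are exactly $\mathcal{W}_{t_1}\supset\cdots\supset\mathcal{W}_{t_m}$ and that, with the convention $\mathcal{W}_{t_{m+1}}:=\emptyset$,
\[\mathcal{W}_{t_j}\setminus\mathcal{W}_{t_{j+1}}=\{Q\in\End(\R^d)\colon\dimaffQ\A=t_j\}\qquad(1\le j\le m).\]
Moreover $\dim\mathcal{W}_{t_{j+1}}<\dim\mathcal{W}_{t_j}=d_j$ by Proposition~\ref{pr:dimaffq-subvarieties}, so $\mathcal{W}_{t_{j+1}}$ is $\mathcal{H}^{d_j}$-null inside $\mathcal{W}_{t_j}$; hence $\dimaffQ\A=t_j$ holds for $\mathcal{H}^{d_j}$-almost every $Q\in\mathcal{W}_{t_j}$.

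Next I would verify the joint measurability that the slicing requires, which I expect to be the only genuinely delicate point. The attractor $X^{\mathbf{v}}$ depends continuously on $\mathbf{v}$ for the Hausdorff metric on the hyperspace $\mathcal{K}(\R^d)$ of nonempty compact subsets of $\R^d$ --- a standard fact, since under the hypothesis of the corollary the maps $T_i^{\mathbf{v}}$ are contractions of a common ratio --- while $(Q,K)\mapsto QK$ is continuous on $\End(\R^d)\times\mathcal{K}(\R^d)$ once $K$ is restricted to a bounded family; hence $(\mathbf{v},Q)\mapsto Q X^{\mathbf{v}}$ is a continuous map into $\mathcal{K}(\R^d)$. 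Combining this with the classical fact that $K\mapsto\dimh K$ is a Borel-measurable function on $\mathcal{K}(\R^d)$, the composite $(\mathbf{v},Q)\mapsto\dimh Q X^{\mathbf{v}}$ is Borel, so for each $j$ the exceptional set
\[B_j:=\bigl\{(\mathbf{v},Q)\in(\R^d)^{\I}\times\mathcal{W}_{t_j}\colon\dimh Q X^{\mathbf{v}}\ne t_j\bigr\}\]
is a Borel subset of the product space. (One may instead confirm this measurability by directly inspecting the energy integrals occurring in the proof of Theorem~\ref{th:dimension-results}\eqref{it:dimension-lower-bounds}, but invoking the hyperspace fact is cleaner.)

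Finally I would run the slicing. Fix $j$. For each $Q\in\mathcal{W}_{t_j}\setminus\mathcal{W}_{t_{j+1}}$ one has $\dimaffQ\A=t_j$, so Theorem~\ref{th:dimension-results}\eqref{it:dimension-lower-bounds}, which applies under the hypothesis of the corollary, gives $\dimh Q X^{\mathbf{v}}=t_j$ for Lebesgue-almost every $\mathbf{v}\in(\R^d)^{\I}$; that is, the $Q$-section $(B_j)_Q$ is Lebesgue-null. The remaining $Q$ make up the $\mathcal{H}^{d_j}$-null set $\mathcal{W}_{t_{j+1}}\subset\mathcal{W}_{t_j}$. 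Applying Tonelli's theorem to the $\sigma$-finite product of Lebesgue measure on $(\R^d)^{\I}$ with $\mathcal{H}^{d_j}$ on $\mathcal{W}_{t_j}$ then shows that $B_j$ is a null set for this product, and Fubini's theorem recasts this as: for Lebesgue-almost every $\mathbf{u}\in(\R^d)^{\I}$ the $\mathbf{u}$-section $(B_j)^{\mathbf{u}}=\{Q\in\mathcal{W}_{t_j}\colon\dimh Q X^{\mathbf{u}}\ne t_j\}$ is $\mathcal{H}^{d_j}$-null. This is precisely the assertion of the corollary for the given $j$ (and, since Theorem~\ref{th:dimension-results}\eqref{it:dimension-lower-bounds} also yields $\dimh=\dimb$, the same statement holds with box dimension). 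As there are only finitely many values of $j$, intersecting the corresponding full-measure sets of parameters $\mathbf{u}$ produces a single full-measure set on which all $m$ conclusions hold simultaneously. The main obstacle throughout, as flagged, is the measurability bookkeeping in the third step; once $B_j$ is known to be Borel, everything else is routine.
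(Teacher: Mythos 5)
Your proposal is correct and follows exactly the route the paper intends: the paper derives Corollary~\ref{co:stratified} from Theorem~\ref{th:dimension-results}\eqref{it:dimension-lower-bounds} by precisely the Fubini--Tonelli slicing you describe, using Proposition~\ref{pr:dimaffq-subvarieties} to see that the sub-stratum $\mathcal{W}_{t_{j+1}}$ is null in $\mathcal{W}_{t_j}$. Your added care about the choice of measure on the variety and the Borel measurability of $(\mathbf{v},Q)\mapsto\dimh QX^{\mathbf{v}}$ fills in details the paper leaves implicit, and is sound.
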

\begin{remark}In general this conclusion cannot be strengthened to include every $Q \in \mathcal{W}_{t_j}\setminus \mathcal{W}_{t_j+1}$. To see this, consider an iterated function system on $\R^2$ of the form $T_ix=\frac{1}{3}{x}+u_i$ for $i \in \I=\{1,2,3\}$. As long as $u_1$, $u_2$, $u_3$ are not colinear, up to an affine co-ordinate change the attractor is the ``one-dimensional Sierpi\'nski triangle'' studied by Furstenberg, Kenyon and Hochman, which has a countable dense family of exceptional projections arising from exact overlaps \cite{Ho14,Ke97}. In particular for Lebesgue a.e. $\mathbf{u}\in (\R^2)^\I$ the dimension $\dimh QX^{\mathbf{u}}$ is nonconstant with respect to $Q \in \mathcal{W}_1\setminus \mathcal{W}_2$. \end{remark}

\begin{figure}
    \centering
    \includegraphics[width=8.9cm]{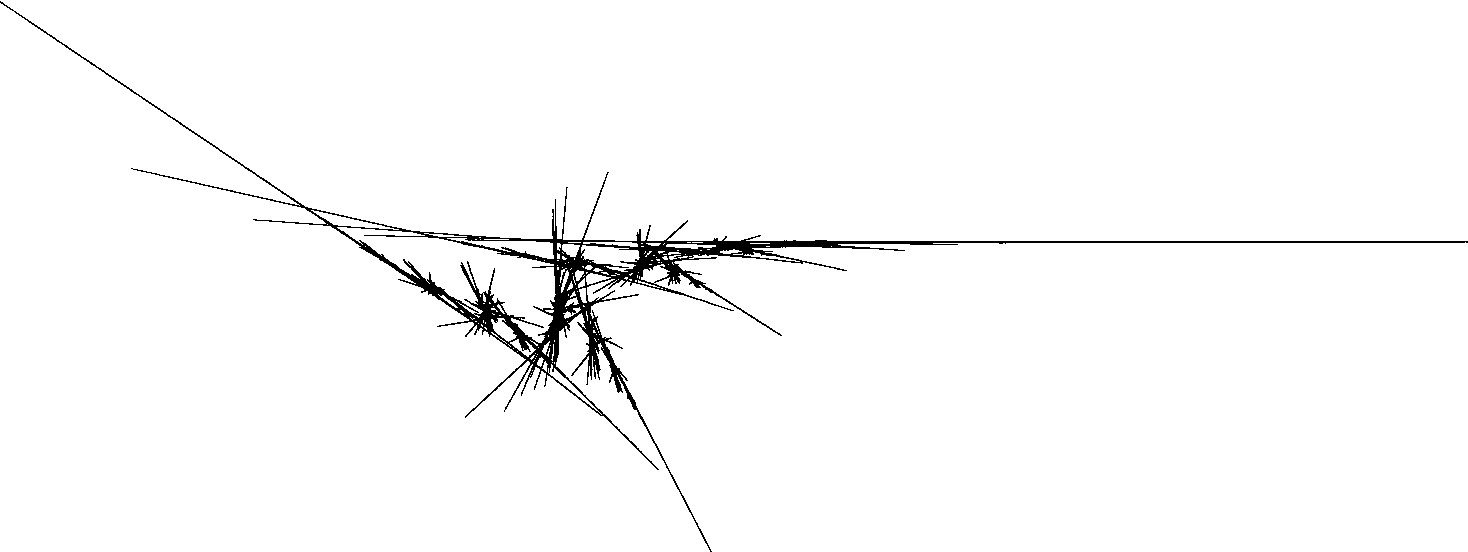}
    
    \bigskip
    
    \bigskip
    
    \includegraphics[width=8.9cm]{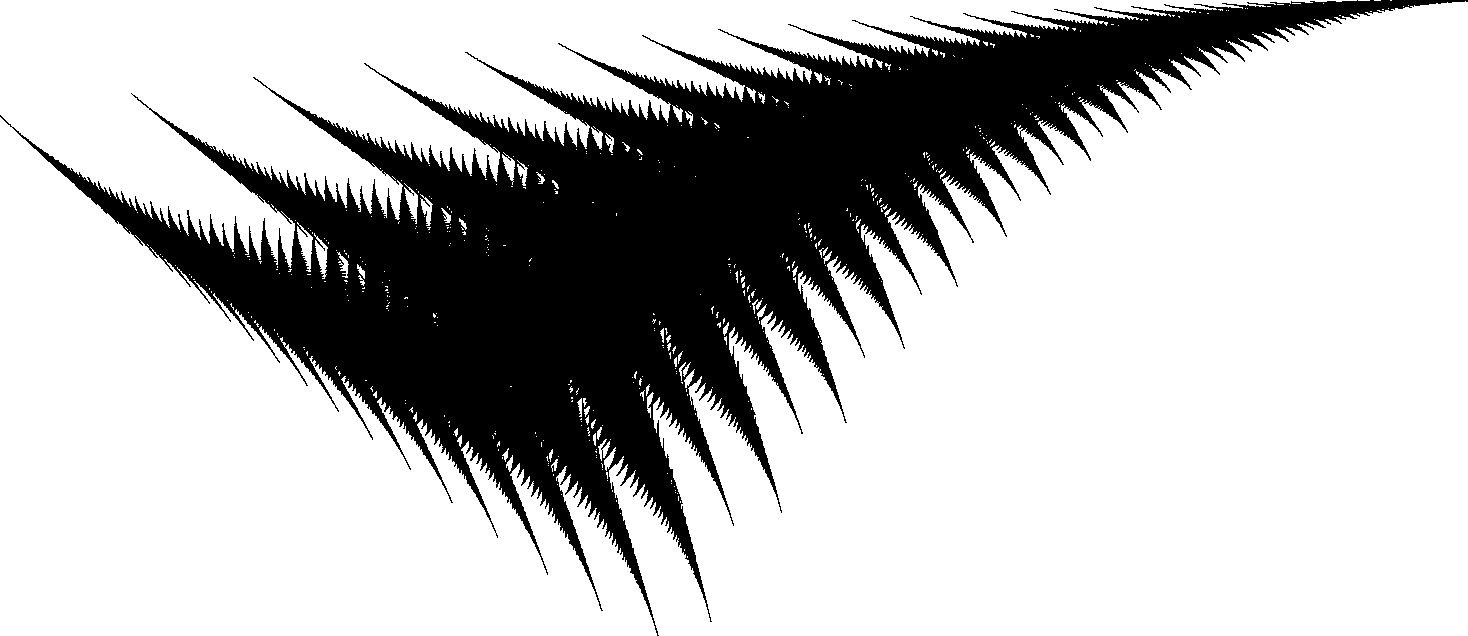}
     \captionsetup{singlelinecheck=off}
    \caption[.]{Two projections of the attractor of an iterated function system 
whose linearisation generates a Zariski-dense subsemigroup of $\R^* \SO(2,2)$. The two projections are onto two-dimensional isotropic subspaces of $\R^4$ with differing orientations.}
    \label{fig:enter-label}
\end{figure}

\subsection{Examples}\label{ss:examples} 

To round out this work we indicate some examples which demonstrate that the hierarchy of varieties identified in Theorem \ref{intro:stratified} is in general nontrivial. The following result establishes broad general criteria for the occurrence (and also the non-occurrence) of nontrivial varieties in Theorem \ref{intro:stratified}. For the statement, we recall that a semigroup \label{notation:Gamma} $\Gamma < \GL(V)$ is called \textit{irreducible} if it does not stabilise any proper nonzero subspace of $V$. It is called \textit{strongly irreducible} if it does not stabilise any finite union of  proper nonzero subspaces of $V$. We recall that $\A \in \GL(V)^\I$ is called $k$-dominated if 
\[\limsup_{n \to \infty} \frac{1}{n} \max_{|\iii|=n}\log \left(\frac{\sigma_{k+1}(A_\iii)}{\sigma_k(A_\iii)}\right)<0\]
for some (and then for every) Euclidean structure on $V$. Clearly $\A$ is $k$-dominated if and only if $(A_i^{\wedge k})_{i \in \I}$ is $1$-dominated.
\begin{theorem}\label{th:general-examples}
Let $\A \in \GL_d(\R)^\I$, let  $G\leq \GL_d(\R)$ denote the Zariski closure of the semigroup generated by $\A$ and let $1 \leq k< d$. Suppose that $G$ is real reductive.
\begin{enumerate}[(i)]
\item \label{it:split}
Suppose that $\wedge^k\R^d$ is a direct sum of two $G$-invariant subspaces each of which contains a nonzero pure $k$-wedge. If $\A$ is $k$-dominated then there exists $Q \in \End(\R^d)$ of rank $k$ such that $P_Q(\A,s)<P(\A,s)$ for every $s \in (k-1,k]$.
\item\label{it:no-split}
Suppose that $\wedge^k\R^d$ is \emph{not} a direct sum of two $G$-invariant subspaces each of which contains a nonzero pure $k$-wedge. Then $P_Q(\A,s)=P(\A,s)$ for every $Q\in \End(\R^d)$ of rank $k$ and for every $s \in [0,k]$. 
\item \label{it:bad-measure-projection}
Suppose that $G$ acts irreducibly but not strongly irreducibly on $\wedge^k \R^d$. If $\mu \in \mathcal{M}_\sigma(\I^\N)$ is ergodic and fully supported, and if $\A$ has distinct $k^{\mathrm{th}}$ and $(k+1)^{\mathrm{st}}$ Lyapunov exponents with respect to $\mu$, then there exists $Q \in \End(\R^d)$ of rank $k$ such that for every $s \in (k-1,k]$ the limit
\[\lim_{n \to \infty}\frac{1}{n}\log \varphi^s(QA_{\underline{\iii}|_n})\]
fails to be constant $\mu$-a.e.\
\end{enumerate}
\end{theorem}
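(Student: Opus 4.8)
\emph{Proof proposal.}
Throughout write $V:=\wedge^k\R^d$. The plan is to reduce all three parts to one circle of ideas. For a rank-$k$ projection $Q$ the map $\wedge^kQ$ has rank one, with $\ker(\wedge^kQ)=\ker Q\wedge\wedge^{k-1}\R^d$; hence $\|\wedge^k(QA_\iii)\|$ is comparable, up to a constant depending only on $Q$ and $\A$, to $\|(\wedge^kA_\iii)^{\transp}c_Q\|$, where $c_Q\in V^*$ is the nonzero \emph{decomposable} covector spanning $\wedge^k((\ker Q)^{\perp})$ (equivalently, annihilating that hyperplane), and a decomposable wedge $\wedge^kW$ lies in $\ker(\wedge^kQ)$ precisely when $W\cap\ker Q\neq\{0\}$. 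Combined with the elementary identity $\varphi^s(M)=\|\wedge^{k-1}M\|^{\,k-s}\|\wedge^kM\|^{\,s-k+1}$ for rank-$k$ $M$ and $s\in(k-1,k]$ (and the analogous identity with $\wedge^{\lfloor s\rfloor},\wedge^{\lceil s\rceil}$ when $s\le k-1$), and the always-available bound $P_Q(\A,s)\le P(\A,s)$ (Theorem~\ref{th:pressure-detail}\eqref{it:monotone-ker-q} with $Q_1=\id$), everything reduces to deciding, in terms of the $G$-module structure of $V$ and $V^*$ and of the Oseledets data, whether a fixed decomposable covector $c_Q$ "sees" the growth-maximising piece of the representation.

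For \eqref{it:no-split}, I would use reductivity of $G$ to write $V^*$ as a direct sum of irreducibles and isolate the isotypic component $M_\star\le V^*$ with $\lim_n n^{-1}\log\sum_{|\iii|=n}\|(\wedge^kA_\iii)^{\transp}|_{M_\star}\|=P(\A,k)$ (and the corresponding pieces for lower exterior powers). For reductive cocycles the Poincaré exponent of $\sum_\iii\|(\wedge^kA_\iii)^{\transp}c\|$ on a fixed $c$ equals $P(\A,k)$ if and only if the component of $c$ in $M_\star$ is nonzero; so $P_Q(\A,k)=P(\A,k)$ for \emph{every} rank-$k$ $Q$ iff no decomposable covector lies in the $G$-complement of $M_\star$. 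The hypothesis — that $V$ is not a direct sum of two $G$-invariant subspaces each meeting the cone $\mathcal D$ of decomposable $k$-wedges — is exactly what I would use to force this: translating through the $G$-equivariant duality $V^*\cong\wedge^{d-k}\R^d\otimes(\det)^{-1}$ (under which decomposable covectors become decomposable $(d-k)$-wedges), a decomposable covector in the complement of $M_\star$ would produce a splitting $V=W_1\oplus W_2$ with both $W_i$ meeting $\mathcal D$, using that $\mathcal D$ is an irreducible variety spanning $V$; given the nonvanishing, irreducibility of $M_\star$ supplies finitely many words $\gamma_l$ whose transposes carry the $M_\star$-component of $c_Q$ to a spanning set, and a super/sub-multiplicativity bookkeeping upgrades this to $\sum_{|\iii|=n}\|(\wedge^kA_\iii)^{\transp}|_{M_\star}\|\le C\sum_{|\jjj|\le n+O(1)}\|(\wedge^kA_\jjj)^{\transp}c_Q\|$, whence $P_Q(\A,k)\ge P(\A,k)$. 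Running the same argument on $\wedge^{k-1}$ and the lower exterior powers yields $P_Q(\A,s)=P(\A,s)$ for all $s\in[0,k]$.

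For \eqref{it:split}, I would first note that $k$-domination makes $\wedge^k\A$ one-dominated, hence it admits a unique dominating line bundle $\ell^+(\underline{\iii})=\wedge^kW^+(\underline{\iii})$; since the summands $U_1,U_2$ are $\wedge^k\A$-invariant, a no-cancellation argument (a word expanding $U_1$ composed with one expanding $U_2$ would make $\sigma_1(\wedge^kA_\iii)\asymp\sigma_2(\wedge^kA_\iii)$, contradicting domination) shows this bundle lies, say, in $U_1$ for every $\underline{\iii}$, and that the growth of $\wedge^k\A$ on $V/U_{\mathrm{top}}$ — where $U_{\mathrm{top}}\le U_1$ is the $G$-submodule spanned by the $\ell^+(\underline{\iii})$ — is exponentially smaller than $\sigma_1(\wedge^kA_\iii)$. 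Because $U_2$ contains a decomposable wedge $\wedge^kW_2\notin U_{\mathrm{top}}$, the plan is to choose a rank-$k$ projection $Q$ with $\im Q=W_2$ and $\ker Q$ adjusted so that $c_Q$ annihilates $U_{\mathrm{top}}$; then $\|\wedge^k(QA_\iii)\|\asymp\|(\wedge^kA_\iii)^{\transp}c_Q\|\le\|(\wedge^kA_\iii)^{\transp}|_{U_{\mathrm{top}}^{\perp}}\|\,\|c_Q\|\le e^{-cn}\sigma_1(\wedge^kA_\iii)=e^{-cn}\varphi^k(A_\iii)$, while $\|\wedge^{k-1}(QA_\iii)\|\le C\|\wedge^{k-1}A_\iii\|$, so the $\varphi^s$-identity gives $\varphi^s(QA_\iii)\le C'e^{-cn(s-k+1)}\varphi^s(A_\iii)$ and hence $P_Q(\A,s)\le P(\A,s)-c(s-k+1)<P(\A,s)$ for all $s\in(k-1,k]$.

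For \eqref{it:bad-measure-projection}, failure of strong irreducibility on $V$ gives a $G$-invariant finite union $V_1\cup\cdots\cup V_r$ ($r\ge2$) of proper subspaces, which we may take permuted transitively by $G$; the distinct-exponents hypothesis makes the top Oseledets line $\ell^+(\underline{\iii})=\wedge^kW^+(\underline{\iii})$ of $\wedge^k\A$ well-defined $\mu$-a.e.\ (cf.\ Theorem~\ref{th:pressure-detail}\eqref{it:thermodynamic}), and it satisfies the equivariance $\ell^+(\sigma\underline{\iii})=(\wedge^kA_{i_1})^{-1}\ell^+(\underline{\iii})$, so $\{\underline{\iii}:\ell^+(\underline{\iii})\in\bigcup_l\mathbb P(V_l)\}$ is $\sigma$-invariant and, by ergodicity, has measure $0$ or $1$. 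Using full support of $\mu$, transitivity of the permutation action, and irreducibility of $G$ on $V$ (which precludes $\mu$-a.e.\ concentration on a single $V_l$), I would in either case produce a $(d-k)$-plane $V_0$ for which $E:=\{\underline{\iii}:W^+(\underline{\iii})\cap V_0\neq\{0\}\}$ has $\mu(E)\in(0,1)$. Taking $Q$ any rank-$k$ projection with $\ker Q=V_0$: on $E^c$ one has $c_Q(\ell^+(\underline{\iii}))\neq0$, so the limit of $\tfrac1n\log\varphi^s(QA_{\underline{\iii}|_n})$ equals $(\lambda_1+\cdots+\lambda_{k-1})+(s-k+1)\lambda_k$; on $E$ the $\wedge^k$-factor loses its leading term, so the limit is strictly smaller, the gap being at least $(s-k+1)(\lambda_k-\lambda_{k+1})>0$ — hence nonconstant for every $s\in(k-1,k]$.

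I expect the main obstacle to be twofold: first, in \eqref{it:no-split}, turning the no-splitting hypothesis into the nonvanishing of the $M_\star$-component of an arbitrary decomposable covector, since (as the contextual remarks stress) the classical reduction by removal of off-diagonal blocks genuinely fails for $P_Q$; and second, common to \eqref{it:split} and \eqref{it:bad-measure-projection}, exhibiting a rank-$k$ projection realising the needed Schubert-type constraint ($\ker(\wedge^kQ)\supseteq U_{\mathrm{top}}$ in \eqref{it:split}, $\mu(E)\in(0,1)$ in \eqref{it:bad-measure-projection}): because $\wedge^kQ$ has rank one, $\ker(\wedge^kQ)$ is forced to be a \emph{decomposable} hyperplane, so one needs a genuine Grassmannian argument to show the relevant submodule or subvariety can be captured by such a special hyperplane.
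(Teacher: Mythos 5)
Your parts (i) and (iii) follow essentially the paper's strategy, but in both cases --- and fatally in (ii) --- the step you yourself flag as ``the main obstacle'' is exactly the step that requires an idea, and the idea is missing. The paper's device is Mostow's theorem: equip $\R^d$ with an inner product for which $G$ is self-adjoint. Then orthogonal complements of $G$-invariant subspaces of $\wedge^k\R^d$ are $G$-invariant, and for an \emph{orthogonal} rank-$k$ projection $Q$ the map $Q^{\wedge k}$ is the orthogonal projection onto the line spanned by the pure wedge $\wedge^k(\im Q)$, so $\ker Q^{\wedge k}=(\wedge^k \im Q)^{\perp}$ automatically contains every invariant subspace orthogonal to that wedge. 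This is what captures a submodule inside a decomposable hyperplane: in (i) one takes $Q$ orthogonal with $\wedge^k\im Q$ a pure wedge in $E_2:=E_1^{\perp}$, where $E_1$ is the minimal invariant subspace containing the dominating direction (shown to have multiplicity one because $g^{\wedge k}$ is proximal, which is also how one shows $E_1^\perp$ contains a pure wedge), whence $\ker Q^{\wedge k}\supseteq E_1$ for free; in (iii) one takes $\wedge^k\im Q$ a pure wedge in $E_1$ lying in the support of the distribution of the top Oseledets wedge, and uses $E_1\subseteq\mathfrak{v}(\underline{\iii})$ whenever that wedge falls in $E_2$. Without this (or an equivalent), your ``adjust $\ker Q$ so that $c_Q$ annihilates $U_{\mathrm{top}}$'' and your ``$\mu(E)\in(0,1)$'' remain unproved. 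Moreover, in (iii) the assertion that the top wedge falls into at least two invariant pieces with positive probability each is not automatic from irreducibility and transitivity; the paper proves it by contradiction, combining full support of $\mu$ with a word $\kkk$ satisfying $A_\kkk^{\wedge k}E_1\neq E_1$ to violate the minimality of $E_1$.

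Part (ii) has two genuine gaps. First, your central implication --- that a decomposable covector vanishing on the top isotypic component $M_\star$ ``would produce a splitting with both summands meeting $\mathcal{D}$'' --- does not follow from $\mathcal{D}$ being an irreducible spanning variety: a $G$-invariant summand need not contain any pure wedge (e.g.\ the line spanned by the symplectic form in $\wedge^2\R^4$), so you must still manufacture a pure wedge in the complement of the $G$-span of $\im Q^{\wedge k}$. The paper supplies exactly this by a compactness argument: if no finite set $F$ of words satisfies $\max_{\jjj\in F}\|(QA_\jjj)^{\wedge k}v\|\geq\kappa_1$ uniformly over unit pure wedges $v$, then some pure wedge $v$ has its entire orbit inside $\ker Q^{\wedge k}$, and the span $W_1$ of that orbit together with $W_2:=W_1^{\perp}\ni\wedge^k\im Q$ is the forbidden splitting. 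Second, ``running the same argument on the lower exterior powers'' does not give $s\in[0,k-1]$: the hypothesis constrains only $\wedge^k\R^d$, and $\wedge^\ell Q$ is no longer rank one for $\ell<k$. The paper instead converts the single inequality $\|(QA_{\jjj_0})^{\wedge k}v\|\geq\kappa_1$ (with $v$ the normalised top singular wedge of $A_\iii$) into $\|(QA_{\jjj_0}A_\iii)^{\wedge\ell}\|\geq\kappa_2\|A_\iii^{\wedge\ell}\|$ for every $\ell\leq k$ by an elementary singular-value computation, which yields $\varphi^s(QA_{\jjj_0}A_\iii)\geq\kappa_2\varphi^s(A_\iii)$ for all $s\in[0,k]$ simultaneously; some substitute for this step is required before your claim for $s\leq k-1$ can stand.
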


We proceed with a few remarks to clarify this result.

\begin{remark}\label{re:varieties}

1. Examples where Theorem \ref{th:general-examples}\ref{it:split} applies include the case $G=\R^*\cdot \SO(k,k)<\GL_{2k}(\R)$ and the case in which $G$ is given by the tensor product representation of $\GL_{d_1}(\R)\times \GL_{d_2}(\R)$. A detailed exposition of these cases is given in the conference proceedings article \cite{Mo27}. In the former case, the proof of Theorem \ref{th:general-examples}\ref{it:split} may be specialised so as to construct examples in which every projection whose kernel has the form $\{(u,Ou) \colon u \in \R^k\}$ for some $O \in \SO(k)$ is an exceptional projection of the attractor. In the case of the tensor product representation of $\GL_{d_1}(\R)\times \GL_{d_2}(\R)$, by quite direct calculations one may construct examples in which every projection of the form $Q \otimes I$, where $1 \leq \rank Q<d_1$, is exceptional. In any case, if $\A$ is strongly irreducible then by Theorem \ref{th:pressure-detail}\ref{it:sublevel-algebraic} the existence of a single orthogonal projection $Q$ such that $\dimaffQ \A<\dimaff \A$ implies the existence of at least a one-dimensional variety of such projections.  These constructions yield examples of algebraic varieties embedded in the exceptional set in Marstrand's Theorem which have not been previously observed in the self-affine context.\\[3pt]
2. It follows from Theorem \ref{th:general-examples}\ref{it:no-split} that if $\A$ is $k$-irreducible (i.e.\ irreducible on $\wedge^{k}\R^d$) with $\dimaff \A \leq k$ then necessarily $\dimaffQ \A=\dimaff \A$ for every $Q \in \End(\R^d)$ whose rank is $k$. However, this outcome can also arise in certain cases where $\A$ is not $k$-irreducible: for example, if $\A$ generates a Zariski-dense subgroup of $\R^* \cdot \mathrm{Sp}(2m,\R)$ then Theorem \ref{th:general-examples}\ref{it:no-split} applies, since in this case $\wedge^k \R^{2m}$ splits into invariant subspaces exactly one of which contains a wedge.\\[3pt]
3. In the statement  \ref{it:bad-measure-projection} above, denote by $\Gamma$ the semigroup generated by $\A$ and $\Gamma^{-1}=\{\gamma^{-1} : \gamma \in \Gamma\}$ the corresponding inverse semigroup. One may show that the conclusion of \ref{it:bad-measure-projection} holds for every $Q \in \End(\R^d)$ whose kernel is represented via the Pl\"ucker embedding by an element of the \emph{limit set} of $\Gamma^{-1}$ in the projective space $\mathbf{P}(\wedge^{d-k} \R^d)$ \label{notation:projW}-- see \cite[\S 4.1]{BeQu16}. The limit set in this case is characterised as the unique minimal $\Gamma^{-1}$-invariant subset of $\mathbf{P}(\wedge^{d-k} \R^d)$.
\\[3pt]
4. We defer a more precise analysis of the pressure drop phenomenon expressed in Theorem \ref{th:general-examples} -- including, in particular, an estimation of the size of the resulting dimension drop --  to the upcoming work \cite{morris-sert.LDP}, where we will also give an alternative expression for $P_Q(\A,s)$ using the large deviations theory of random matrix products.
\end{remark}

Theorem \ref{th:general-examples}\ref{it:bad-measure-projection} implies the following general criterion for the existence of non-exact-dimensional projections of $\varphi^s$-equilibrium states:

\begin{corollary}\label{co:non-exact-dimensional}
Suppose that $\A \in \GL_d(\R)^\I$ is strongly $(k-1)$-irreducible, is $k$-proximal,  $k$-irreducible and not strongly $k$-irreducible, and is $(k+1)$-irreducible. Suppose also that $s:=\dimaff \A \in (k-1,k]$. Then there is a unique $\varphi^s$-equilibrium state $\mu$ for $\A$, and there exist rank-$k$ projections $Q$ such that the local Lyapunov dimension $\dimlyapQ (\A,\mu,\cdot)$ is not constant $\mu$-a.e.\
\end{corollary}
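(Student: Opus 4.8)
The plan is to deduce the Corollary directly from Theorem~\ref{th:general-examples}\eqref{it:bad-measure-projection}, the crux being to convert non‑constancy of the projected singular‑value exponents into non‑constancy of the local Lyapunov dimension. Put $s:=\dimaff\A\in(k-1,k]$. First I would assemble the auxiliary facts. Uniqueness --- hence, by the remarks following Theorem~\ref{th:pressure-detail}, ergodicity --- of the $\varphi^s$‑equilibrium state $\mu$ follows from the $k$‑proximality, $k$‑irreducibility, strong $(k-1)$‑irreducibility and $(k+1)$‑irreducibility hypotheses via the structure theory of matrix equilibrium states recovered by Theorem~\ref{th:pressure-detail}; $\mu$ is fully supported, since a measure supported on a proper sub‑alphabet $\J\subsetneq\I$ would give $P(\A,s)\leq P(\A',s)<P(\A,s)$ by Theorem~\ref{th:pressure-detail}\eqref{it:monotone-A}; and $h(\mu)>0$, because $\mu$ realises $P(\A,s)=0$, so that \eqref{eq:classical-0} holds $\mu$‑a.e.\ for this $s$, whence $\lim_n\tfrac1n\log\varphi^s(A_{\underline{\iii}|_n})=-h(\mu)<0$ for $\mu$‑a.e.\ $\underline{\iii}$ (by contractivity). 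The hypotheses force $k<d$; since a linear map fixing every $k$‑dimensional subspace is $\pm\mathrm{id}$, the kernel of $g\mapsto\wedge^{k}g$ is finite, so $k$‑irreducibility makes the unipotent radical of $G$ trivial, i.e.\ $G$ reductive; and $k$‑proximality, $k$‑irreducibility and full support of $\mu$ make the top Lyapunov exponent of $\A$ on $\wedge^{k}\R^d$ simple, i.e.\ $\A$ has distinct $k^{\mathrm{th}}$ and $(k+1)^{\mathrm{st}}$ Lyapunov exponents with respect to $\mu$. Thus Theorem~\ref{th:general-examples}\eqref{it:bad-measure-projection} applies and yields a rank‑$k$ linear map $Q$ --- which, since $\varphi^{s'}(Q\,\cdot\,)$ and $\dimlyapQ$ depend only on $\ker Q$ by Theorem~\ref{th:pressure-detail}\eqref{it:monotone-ker-q}, I may take to be an orthogonal projection --- such that, setting $\Phi_{s'}(\underline{\iii}):=\lim_n\tfrac1n\log\varphi^{s'}(QA_{\underline{\iii}|_n})$ (well‑defined $\mu$‑a.e.\ for all $s'\in[0,k]$ by Theorem~\ref{th:pressure-detail}\eqref{it:thermodynamic}), the function $\underline{\iii}\mapsto\Phi_{s'}(\underline{\iii})$ is \emph{not} $\mu$‑a.e.\ constant, for every $s'\in(k-1,k]$.

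I would then set $g_{s'}(\underline{\iii}):=\Phi_{s'}(\underline{\iii})+h(\mu)$, so that $\dimlyapQ(\A,\mu)(\underline{\iii})=\inf\{s'\in[0,k]:g_{s'}(\underline{\iii})<0\}$ (taking the value $k$ if this set is empty) while, for $\mu$‑a.e.\ $\underline{\iii}$, the map $s'\mapsto g_{s'}(\underline{\iii})$ is continuous and strictly decreasing on $[0,k]$. As $Q$ is orthogonal, $\varphi^{s}(QA)\leq\varphi^{s}(A)$, hence $\Phi_{s}(\underline{\iii})\leq\lim_n\tfrac1n\log\varphi^{s}(A_{\underline{\iii}|_n})=-h(\mu)$ for $\mu$‑a.e.\ $\underline{\iii}$, giving $g_{s}\leq0$ and $\dimlyapQ(\A,\mu)(\underline{\iii})\leq s$ $\mu$‑a.e. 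Now suppose, for a contradiction, that $\dimlyapQ(\A,\mu)(\,\cdot\,)$ is $\mu$‑a.e.\ equal to a constant $c$. Then $c\leq s\leq k$, and continuity together with strict monotonicity forces $g_{c}(\underline{\iii})=0$ --- i.e.\ $\Phi_{c}(\underline{\iii})=-h(\mu)$ --- for $\mu$‑a.e.\ $\underline{\iii}$ (in the case $c=k$ one has $c=s$ and uses $g_{s}\leq0$ directly). If $c\in(k-1,k]$ this contradicts Theorem~\ref{th:general-examples}\eqref{it:bad-measure-projection}, so it remains only to rule out $c\leq k-1$.

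This last point is where strong $(k-1)$‑irreducibility enters, and it is the hard part. I claim that $\Phi_{k-1}(\underline{\iii})=\lambda_{1}+\cdots+\lambda_{k-1}$ for $\mu$‑a.e.\ $\underline{\iii}$, where $\lambda_{1}\geq\cdots\geq\lambda_{d}$ are the Lyapunov exponents of $\A$ with respect to $\mu$. Granting this, $s>k-1$ gives $\lambda_{1}+\cdots+\lambda_{k-1}+h(\mu)=-(s-(k-1))\lambda_{k}>0$, hence $g_{k-1}(\underline{\iii})>0$ and $\dimlyapQ(\A,\mu)(\underline{\iii})>k-1$ for $\mu$‑a.e.\ $\underline{\iii}$; with $\dimlyapQ(\A,\mu)(\underline{\iii})\leq s$ this forces the hypothetical constant $c$ into $(k-1,k]$ and finishes the proof. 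To prove the claim, write $\varphi^{k-1}(QA_{\underline{\iii}|_n})=\|\wedge^{k-1}(QA_{\underline{\iii}|_n})\|=\|(\wedge^{k-1}Q)(\wedge^{k-1}A_{\underline{\iii}|_n})\|$; since $\wedge^{k-1}Q$ is the orthogonal projection of $\wedge^{k-1}\R^d$ onto the nonzero subspace $\wedge^{k-1}(\mathrm{im}\,Q)$, transposing shows this is comparable, up to a constant depending only on a fixed basis $w_{1},\dots,w_{k}$ of $\wedge^{k-1}(\mathrm{im}\,Q)$, to $\max_{l}\|(\wedge^{k-1}A_{\underline{\iii}|_n})^{\transp}w_{l}\|$. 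Here $\underline{\iii}\mapsto(\wedge^{k-1}A_{\underline{\iii}|_n})^{\transp}$ is the linear cocycle over $\sigma$ generated by $i\mapsto\wedge^{k-1}(A_{i}^{\transp})$; its generators generate a semigroup acting strongly irreducibly --- hence irreducibly --- on $\wedge^{k-1}\R^d$ (it is the transpose of the strongly $(k-1)$‑irreducible semigroup generated by $\A$), and its top Lyapunov exponent equals $\lambda_{1}+\cdots+\lambda_{k-1}$. Since $\mu$ is a Gibbs measure by Theorem~\ref{th:pressure-detail}\eqref{it:eqm-gibbs}, it has local product structure, so the Furstenberg--Kifer mechanism --- in the form valid for measures with local product structure, via Avila--Viana, or directly from the $\psi$‑mixing provided by Theorem~\ref{th:pressure-detail}\eqref{it:eqm-psi-mixing} --- applies: irreducibility makes the Furstenberg--Kifer filtration trivial, so $\lim_n\tfrac1n\log\|(\wedge^{k-1}A_{\underline{\iii}|_n})^{\transp}w\|=\lambda_{1}+\cdots+\lambda_{k-1}$ for $\mu$‑a.e.\ $\underline{\iii}$ and every fixed $w\neq0$, and the claim follows. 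The main obstacle is exactly this step: establishing that $\mu$, although not a Bernoulli measure on $\I^\N$, carries enough independence for the Furstenberg--Kifer mechanism to force the $(k-1)$‑dimensional projected exponent not to drop; once this is in hand the rest is elementary bookkeeping with the piecewise‑affine functions $g_{s'}(\underline{\iii})$ and their zeros.
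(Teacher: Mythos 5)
Your setup (uniqueness and ergodicity of $\mu$, reductivity of $G$ from $k$-irreducibility, the Lyapunov gap, the application of Theorem~\ref{th:general-examples}\eqref{it:bad-measure-projection}, and the reduction of the problem to pinning down the hypothetical constant value $c$ of $\dimlyapQ(\A,\mu,\cdot)$) is broadly sound, and your disposal of the case $c\in(k-1,k]$ via the non-constancy of $\Phi_c$ is correct. But the step you yourself identify as the hard part --- ruling out $c\leq k-1$ by proving $\Phi_{k-1}(\underline{\iii})=\lambda_1+\cdots+\lambda_{k-1}$ for $\mu$-a.e.\ $\underline{\iii}$ --- is a genuine gap, not a citation. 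That claim amounts to a Furstenberg--Kifer statement for the \emph{non-Bernoulli} equilibrium state $\mu$: that for a fixed nonzero $w\in\wedge^{k-1}\R^d$ the Oseledets slow subspace of the transposed cocycle avoids $w$ almost surely. The paper's own technical remarks after Theorem~\ref{th:pressure-detail} flag exactly this as the delicate point: it is known for Bernoulli measures (Furstenberg--Kifer), for measures with local product structure under \emph{additional algebraic hypotheses} (Avila--Viana), and for supermultiplicative measures (Feng--Xie) --- and none of these is available here. The Gibbs inequality of Theorem~\ref{th:pressure-detail}\eqref{it:thermodynamic}\eqref{it:eqm-gibbs} gives only $\mu([\iii\jjj])\leq C\mu([\iii])\mu([\jjj])$ (the potential is submultiplicative, so the measure is \emph{sub}-, not super-, multiplicative up to constants), and neither $\psi$-mixing nor strong irreducibility of the transposed action is known to rule out the Oseledets distribution charging the set of hyperplanes through a fixed $w$. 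Indeed, Theorem~\ref{th:general-examples}\eqref{it:bad-measure-projection} exists precisely because this phenomenon does occur for such equilibrium states one exterior-power level up. So as written, this step would need a new argument, not a reference.

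The paper sidesteps the issue entirely by using Theorem~\ref{th:general-examples}\eqref{it:no-split}: since $G$ acts irreducibly on $\wedge^k\R^d$, there is no invariant splitting at level $k$, so $P_Q(\A,s)=P(\A,s)=0$ for \emph{every} rank-$k$ projection $Q$. Hence every $(\varphi^s,Q)$-equilibrium state is a $\varphi^s$-equilibrium state, so $\mu$ itself is the $(\varphi^s,Q)$-equilibrium state, and by Theorem~\ref{th:pressure-detail}\eqref{it:thermodynamic} the essential supremum of $\underline{\iii}\mapsto\lim_n\frac1n\log\bigl(\varphi^s(QA_{\underline{\iii}|_n})/\mu([\underline{\iii}|_n])\bigr)$ equals $0$. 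This pins the essential supremum of $\dimlyapQ(\A,\mu,\cdot)$ at $s\in(k-1,k]$ without any analysis below level $k$, and the non-constancy of $\Phi_s$ from Theorem~\ref{th:general-examples}\eqref{it:bad-measure-projection} then forces the essential infimum strictly below $s$. I recommend replacing your $c\leq k-1$ argument with this. Two smaller points: full support of $\mu$ does not follow from excluding sub-alphabets $\J\subsetneq\I$ (an invariant measure can be supported on a proper subshift that is not of the form $\J^\N$); use the Gibbs inequality, which gives $\mu([\iii])>0$ for every $\iii$ directly. And uniqueness of the $\varphi^s$-equilibrium state under the stated irreducibility/proximality hypotheses is not a consequence of Theorem~\ref{th:pressure-detail} alone; the paper imports it from \cite{Mo18b} --- which is also where the hypothesis of strong $(k-1)$-irreducibility is actually consumed, rather than in any level-$(k-1)$ exponent computation.
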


Here $\A$ is called $k$-proximal if, denoting by $\Gamma$ the semigroup generated by $\A$ in $\GL(\wedge^k\R^d)$, $\overline{\R \Gamma}$ contains a rank-one element in $\End(\wedge^k \R^d)$. In the presence of $k$-irreducibility this is equivalent to the existence of an element of $\overline{\R \Gamma}$ with a simple leading eigenvalue.

\begin{proof}
By \cite[Theorem 1.2(i)]{Mo18b} there exists a unique $\varphi^s$-equilibrium state $\mu$ for $\A$ and its $k^{\mathrm{th}}$  and $(k+1)^{\mathrm{st}}$ Lyapunov exponents are distinct, and by the main result of \cite{BoMo18} this measure is fully supported.  Since $\A$ is $k$-irreducible it generates a reductive linear algebraic group $G \leq \GL_d(\R)$ which acts irreducibly but not strongly irreducibly on $\wedge^k\R^d$. By Theorem \ref{th:general-examples}\ref{it:bad-measure-projection} there exists $Q\in \End(\R^d)$ of rank $k$ such that $\lim_{n \to \infty}\frac{1}{n}\log\varphi^s(QA_{\underline{\iii}|_n})$ is not constant a.e, and by Theorem \ref{th:general-examples}\ref{it:no-split} we have $P_Q(\A,s)=P(\A,s)$ as a consequence of $k$-irreducibility. Every $(\varphi^s,Q)$-equilibrium state for $\A$ is therefore also a $\varphi^s$-equilibrium state, so $\mu$ itself must be a $(\varphi^s,Q)$-equilibrium state. The essential supremum of $\dimlyapQ(\A,\mu,\cdot)$ thus equals $s$, but by Theorem \ref{th:general-examples}\ref{it:bad-measure-projection} the essential infimum cannot also equal $s$.
\end{proof}

By choosing $\A \in \GL_d(\R)^\I$ to generate a Zariski-dense subgroup of the group of generalised permutation matrices (i.e. the group of $d\times d$ real matrices having exactly one nonzero entry in each row and column), combining Corollary \ref{co:non-exact-dimensional} and Theorem \ref{th:dimension-results} in the case $k=1$ yields Theorem \ref{intro:not-exact-dimensional}. Here the projections in question are precisely the rank-one orthogonal projections onto the co-ordinate axes. In the case $d=2$ this corresponds to the examples considered in \cite{AlKaPrSiTr25,FeXi25}. In the case $d=2k \geq 4$, taking $\A$ to generate a Zariski-dense subgroup of $\R^*\cdot \O(k,k)$ yields strongly irreducible examples.  A careful reading of the proof of Theorem \ref{th:general-examples}\ref{it:bad-measure-projection} further demonstrates that  in this case the set of projections $Q$ such that $Q_*\Pi_*\mu$ fails to be exact-dimensional includes the set of all orthogonal projections onto $k$-dimensional isotropic subspaces of the group $\O(k,k)$. 

\subsection{Structure of the remainder of the article} In \S\ref{se:technical-core} below we state and prove a more general and more overtly algebraic formulation of Theorem \ref{th:pressure-detail}, namely Theorem \ref{th:main-tech}. Theorem \ref{th:pressure-detail} is then derived from this result in \S\ref{se:derivation}. Sections \ref{se:dimension} and \ref{se:general-examples} provide the proofs of Theorems \ref{th:dimension-results} and \ref{th:general-examples} respectively, each of which is relatively short and is mostly self-contained.

As noted earlier in this section, Theorems \ref{intro:stratified} and \ref{intro:not-exact-dimensional} follow from straightforward combinations of the results already stated above. Theorem \ref{intro:sumset} on the other hand follows from the combination  of Theorem \ref{th:dimension-results} with some additional explicit calculations which we present at the end of \S\ref{se:general-examples}.

\section{An abstract formulation of Theorem \ref{th:pressure-detail}}\label{se:technical-core}
In this section we state and prove Theorem \ref{th:main-tech}, a more general and more abstract formulation of Theorem \ref{th:pressure-detail} in which the map $\iii \mapsto \varphi^s(QA_\iii)$ is replaced with a general potential given by a product of operator norms of finitely many representations of a semigroup. Theorem \ref{th:pressure-detail} will be deduced from this result in the following Section \ref{se:derivation}. We begin by setting notation and stating some preliminaries.

\subsection{Preliminaries from subadditive thermodynamic formalism}

We retain the notation from the previous section, in particular, $\I$ denotes a nonempty finite set and $\Gamma_\I$ the set of all finite words over $\iii$.
We define a \emph{potential}\label{notation:Psi} to be an arbitrary function $\Gamma_\I \to (0,\infty)$. We call a potential  \emph{submultiplicative} if it satisfies $\Psi(\iii\jjj)\leq \Psi(\iii)\Psi(\jjj)$ for every $\iii,\jjj \in \Gamma_\I$. We further call a potential $\Psi \colon \Gamma_\I \to(0,\infty)$ \emph{quasi-multiplicative} if it is submultiplicative and if additionally there exist $\kappa>0$ and a finite set $F\subset \Gamma_\I$ such that 
\[\max_{\kkk \in F} \Psi(\iii \kkk \jjj) \geq \kappa \Psi(\iii)\Psi(\jjj)\]
for all $\iii,\jjj \in \Gamma_\I$. If the set $F$ can be chosen to consist of words of constant length then we will say that $\Psi$ is \emph{strongly quasi-multiplicative}. The ergodic-theoretic properties of these three classes of potentials are summarised in the following result, which encompasses the work of several previous authors:
\begin{theorem}[\cite{CaFeHu08,Fe11,Pi20}]\label{th:properties-of-potentials}
Let $\I$ be a nonempty finite set and $\Psi \colon \I^\N \to (0,\infty)$ be a potential. Then:
\begin{enumerate}[(i)]
    \item\label{it:variational-principle}
    If $\Psi$ is submultiplicative then the pressure\label{notation:P(Psi)}
    \[P(\Psi):=\lim_{n \to \infty} \frac{1}{n}\log \sum_{|\iii|=n}\Psi(\iii)\in [-\infty,\infty)\]
    exists and satisfies
    \[P(\Psi)=\sup_{\mu \in \mathcal{M}_\sigma(\I^\N)} \left[h(\mu) + \lim_{n \to \infty}\frac{1}{n}\int_{\I^\N} \log\Psi(\underline{\iii}|_n)d\mu(\underline{\iii})\right],\]
    and this supremum is attained by at least one ergodic measure. We call invariant measures which attain this supremum \emph{equilibrium states} of $\Psi$.
    \item\label{it:quasi-multiplicative}
    If $\Psi$ is quasi-multiplicative then it has a unique equilibrium state $\mu \in \mathcal{M}_\sigma(\I^\N)$. The measure $\mu$ is ergodic with respect to $\sigma$, and there exists $C\geq 1$ such that  
    \[C^{-1}\leq \frac{\mu([\iii])}{e^{-|\iii|P(\Psi)} \Psi(\iii)} \leq C\]
    for every $\iii \in \Gamma_\I$.
    \item \label{it:strong-quasi}
    If $\Psi$ is strongly quasi-multiplicative then its unique equilibrium state $\mu$  is additionally \emph{$\psi$-mixing}:
\[\lim_{n \to \infty}\sup_{\iii,\jjj\in \Gamma_\I} \left|\frac{\mu([\iii]\cap \sigma^{-|\iii|-n} [\jjj])}{\mu([\iii])\mu([\jjj])}-1\right| =0.\]
\end{enumerate}
\end{theorem}
\begin{proof}
Part \ref{it:variational-principle} follows from the main theorem of \cite{CaFeHu08}, and \ref{it:quasi-multiplicative} similarly follows from \cite[Theorem 5.5]{Fe11}. To derive the $\psi$-mixing property of equilibrium states of strongly quasi-multiplicative potentials, we first note that the $\psi$-mixing property defined above holds if and only if the same property holds for the natural lifting of the measure $\mu$ to a shift-invariant measure on the invertible, two-sided shift $\I^\Z$. Following the proof of \cite[Theorem 1.3]{Pi20}, the $\psi$-mixing property of the latter may be derived by applying Theorem 1.1 of \cite{Br83} and the remarks following it, or can be obtained more directly from \cite[Theorem 4.1(2)]{Br05}.
\end{proof}

We additionally say that two potentials $\Psi_1,\Psi_2 \colon \Gamma_\I \to (0,\infty)$ are \emph{equivalent} if their ratio $\Psi_1/\Psi_2$ is uniformly bounded away from zero and infinity. Clearly if two potentials are equivalent then they have identical sets of equilibrium states, and if both potentials are quasi-multiplicative then the converse also holds as a consequence of Theorem \ref{th:properties-of-potentials}\ref{it:quasi-multiplicative}.

\emph{Remark.} If a measure $\mu \in \mathcal{M}_\sigma(\I^\N)$ is $\psi$-mixing then its natural extension is measurably isomorphic to a Bernoulli process by a theorem of N. Friedman and D.S. Ornstein \cite{FrOr70}. The same conclusion also holds if $\mu$ is quasi-multiplicative and totally ergodic by a recent result of B. Call and K. Park \cite{CaPa23}, but to the best of our knowledge this combination of conditions is distinct from both $\psi$-mixing and strong quasi-multiplicativity in general.

\subsubsection*{Some notions and notation from the linear algebraic setup}
The potentials we will consider in the sequel will be given by products of operator norms of an elements of a semigroup in finitely many representations. To express these, we need to fix some linear algebraic terminology and notation before proceeding. All vector spaces are understood to be finite dimensional and over $\R$ (although we note that in this section, all results are valid if the base field is an arbitrary local field). A \emph{representation} $\rho$ \label{notation:rho} of a semigroup $\Gamma$ will simply mean a semigroup homomorphism $\Gamma \to \GL(W)$, where $W$ is a (finite dimensional real) vector space. A representation from a linear algebraic group $G$ to another linear algebraic group $H$ will always be understood to mean a homomorphism which is given by a polynomial map with respect to the respective algebraic structures. 
A representation $\rho: \Gamma \to \GL(W)$ is called \emph{irreducible} if the only linear subspaces of $W$ which are preserved by every element of the image of $\rho$ are $\{0\}$ and $W$. If $\rho \colon \Gamma \to G$ is a representation from a semigroup to a linear algebraic group then we will find it convenient to define $\Gamma^o:=\{\gamma \in \Gamma \colon \rho(\gamma)\in G^o\}$,\label{notation:Gammao} which is easily seen to be a subsemigroup of $\Gamma$. An element of $\End(W)$ is called \emph{proximal} if it has a unique eigenvalue of maximum modulus and if additionally that eigenvalue is simple.

If $G$ is an arbitrary group, its associated \emph{group ring} $\R[G]$ over $\R$ is the set of all formal linear combinations $\sum_{g \in G} f(g)\cdot g$ with $f \colon G \to \R$ finitely supported, equipped with coordinate-wise addition and with the natural distributive multiplication induced by the group operation, i.e.
$(\sum_{g \in G}f_1(g)\cdot g)(\sum_{g \in G}f_2(g)\cdot g):=\sum_{g \in G} (\sum_{h_1h_2=g}f_1(h_1)f_2(h_2))\cdot g.$
%\[\left(\sum_{g \in G}f_1(g)\cdot g\right)\left(\sum_{g \in G}f_2(g)\cdot g\right):=\sum_{g \in G} \left(\sum_{h_1h_2=g}f_1(h_1)f_2(h_2)\right)\cdot g.\]
If $G$ acts on a vector space $W$ via a representation $\rho \colon G \to \GL(W)$ then $W$ acquires the structure of a module over $\R[G]$ via the scalar multiplication operation $(\sum_{g \in G}f(g)\cdot g , w)\mapsto \sum_{g \in G}f(g)\rho(g)w$. %and the obvious notion of addition. 
The submodules $V$ of the $\R[G]$-module $W$ are then precisely the linear subspaces of $W$ which are preserved by every $\rho(g)\in \GL(W)$, and likewise the quotient  $W/V$ of two $\R[G]$-modules is precisely the vector space $W/V$ equipped with its quotient $\R[G]$-module structure. We write $V \leq W$\label{notation:submodule} to mean that $V$ is a submodule of $W$. Two $\R[G]$-modules are isomorphic if and only if they are isomorphic as vector spaces via a linear transformation which commutes with the action of every $\rho(g)$. A module is called \emph{simple} if it has no proper submodules except the zero module. Via the Jordan-H\"older theorem the $\R[G]$-module induced by a finite-dimensional representation $G \to \GL(W)$ may be  decomposed uniquely (up to isomorphism) into a finite composition series.

If $W$ is a vector space equipped with a norm $\|\cdot\|_W$, \label{notation:Wnorm} $\rho\colon G\to \GL(W)$ is a representation and $V\leq W$ an $\R[G]$-submodule of $W$, then to simplify notation we will write $\|\rho(g)\|_W$ for the (operator) norm of $\rho(G)$ considered as an element of $\GL(W)$, $\|\rho(g)\|_V$ for the norm of $\rho(g)|_V \in \GL(V)$, $\|\cdot\|_{W/V}$ for the induced norm on $W/V$ and hence $\|\rho(g)\|_{W/V}$ for the norm of the linear transformation of $W/V$ induced by $\rho(g)$. Finally, where $\rho(g)$ induces a linear map from a normed vector space $V_1$ to another one $V_2$, we write $\|\rho(g)\|_{V_1 \to V_2}$ \label{notation:W12} for the norm of this induced linear map. 
The notation $\mathbf{S}_V$ will denote the unit sphere of the normed vector space $V$.  \label{notation:sphere}

\subsection{Statement of the abstract theorem}
We may now state the general result from which Theorem \ref{th:pressure-detail} will be derived.
\begin{theorem}\label{th:main-tech}
Let $\I$ be a finite nonempty set, let $W$ be a finite-dimensional real vector space, for every $i \in \I$ let $g_i \in \GL(W)$ and for every $\iii = i_1\cdots i_n \in \Gamma_\I$ define $g_\iii:=g_{i_1}g_{i_2}\cdots g_{i_n} \in G$. Let $G \leq \GL(W)$ denote the linear algebraic group which is the Zariski closure of $\{g_\iii \colon \iii \in \Gamma_\I\}$,  let $r \geq 1$ and for every $j=1,\ldots,r$ let $V_j$ be a finite-dimensional real vector space and $\rho_j \colon G \to \GL(V_j)$ a representation. For each $j=1,\ldots,r$ let $n_j$ denote the number of distinct isomorphism classes of composition factors of $V_j$ as an $\R[G^o]$-module.

Let $\mathfrak{U}$ \label{notation:frakU} denote the set of all tuples $\mathbf{U}=(U_j/U_j')_{j=1}^r$ \label{notation:Ujs} \label{notation:bfU} such that for every $j=1,\ldots,r$, $U_j$ and $U_j'$ are $\R[G^o]$-submodules of $V_j$ satisfying $0 \leq U_j' < U_j \leq V_j$. Let $\mathfrak{U}_S$  \label{notation:frakUS} denote the set of all $\mathbf{U}=(U_j/U_j')_{j=1}^r \in \mathfrak{U}$ having the additional property that every $U_j/U_j'$ is a simple $\R[G^o]$-module. For every $\mathbf{U} \in \mathfrak{U}$ and $\mathbf{b}=(\beta_j)_{j=1}^r \in [0,\infty)^r$ \label{notation:bfb} 
define a potential $\Psi_{\mathbf{U}, \mathbf{b}} \colon \Gamma_\I \to (0,\infty)$ by

\[\Psi_{\mathbf{U}, \mathbf{b}}(\iii):=\max_{ hG\in G/G^o} \left(\prod_{j=1}^r \left\|\rho_j(g_\iii)\right\|_{\frac{\rho_j(h)U_j}{\rho_j(h)U_j'} \to \frac{\rho_j(g_\iii 
h)U_j}{\rho_j(g_\iii h)U_j'}}^{\beta_j}\right).\]\label{notation:PsiUb}
Then the following properties hold:
\begin{enumerate}[(a)]
    \item\label{it:potential-basics}
Let $\mathbf{b}\in [0,\infty)^r$. If $\mathbf{U}\in \mathfrak{U}$ then the potential $\Psi_{\mathbf{U}, \mathbf{b}}$ is submultiplicative; if $\mathbf{U} \in \mathfrak{U}_S$, then $\Psi_{\mathbf{U}, \mathbf{b}}$ is quasi-multiplicative; and if $\mathbf{U} \in \mathfrak{U}_S$ and additionally $\bigcup_{|\iii|=m} g_\iii G^o=G$ for some integer $m \geq 1$, then $\Psi_{\mathbf{U}, \mathbf{b}}$ is strongly quasi-multiplicative.
If $\mathbf{U}=(U_j/U_j')_{j=1}^r$ and  $\hat{\mathbf{U}}=(\hat{U}_j/ \hat{U}_j')_{j=1}^r$ are elements of $\mathfrak{U}$ having the property that the $\R[G^o]$-modules $U_j/U_j'$ and $\hat{U}_j/\hat{U}_j'$ are isomorphic for every $j=1,\ldots,r$ such that $\beta_j \neq 0$, then $\Psi_{\mathbf{U}, \mathbf{b}}$ and $\Psi_{\hat{\mathbf{U}}, \mathbf{b}}$ are equivalent potentials.
\item \label{it:max-over-composition-factors}
Define a partial order relation $\preceq$ on $\mathfrak{U}$ as follows: $\mathbf{U}=(U_j/U_j')_{j=1}^r$ and  $\hat{\mathbf{U}}=(\hat{U}_j/\hat{U}_j')_{j=1}^r$ satisfy $\mathbf{U}\preceq \hat{\mathbf{U}}$ if and only if  $\hat{U}_j' \leq U_j' < U_j \leq \hat{U}_j$ for every $j=1,\ldots,r$. Then for every $\mathbf{U} \in \mathfrak{U}$ and $\mathbf{b} \in [0,\infty)^r$,
\[P(\Psi_{\mathbf{U}, \mathbf{b}}) = \max\left\{ 
 P\left(\Psi_{\hat{\mathbf{U}}, \mathbf{b}}\right)\colon \hat{\mathbf{U}} \in \mathfrak{U}_S\text{ and }\hat{\mathbf{U}}\preceq \mathbf{U}\right\},\]
and an ergodic measure $\mu \in \mathcal{M}_\sigma(\I^\N)$ is an equilibrium state of $\Psi_{\mathbf{U},\mathbf{b}}$ if and only if it is the unique equilibrium state of some potential $\Psi_{\hat{\mathbf{U}},\mathbf{b}}$ which attains the above maximum.

\item \label{it:psi-mixing-tech}
Let $\mathbf{U}\in \mathfrak{U}_S$ and $\mathbf{b}=(\beta_j)_{j=1}^r \in [0,\infty)^r$ and let $\mu$ be the unique equilibrium state of the quasi-multiplicative potential $\Psi_{\mathbf{U},\mathbf{b}}$. Then there exist an integer $q\geq1$ and a Borel probability measure $\nu$ on $\I^\N$ such that $\mu=\frac{1}{q}\sum_{k=0}^{q-1}(\sigma^k)_*\nu$ and such that $\nu$ is $\psi$-mixing with respect to $\sigma^q$:
\begin{equation*}%\label{eq:psi-tech}
\lim_{n \to \infty} \sup_{\substack{\iii,\jjj \in \Gamma_\I\\ q\text{ divides }|\iii|, |\jjj|}} \left|\frac{\nu([\iii]\cap \sigma^{-|\iii|-nq}[\jjj])}{\nu([\iii])\nu([\jjj])}-1\right|=0.\end{equation*}
The integer $q$ is not greater than $\prod_{1 \leq j \leq r \colon \beta_j \neq 0}n_j$ and is a factor of $[G \colon G^o]$. If $\bigcup_{|\iii|=m} g_\iii G^o=G$ for some integer $m \geq 1$ then we may take $q:=1$. 

\item \label{it:q-pressure}
For every $j=1,\ldots,r$ let $Q_j \in \End(V_j)$ be nonzero, let $Z_j$ be the unique maximal $\R[G^o]$-submodule of $V_j$ which is a subset of $\ker Q_j$, and let $\mathbf{Z}=(V_j/Z_j)_{j=1}^r \in \mathfrak{U}$. Then there exist $C,\kappa>0$ such that for every $\mathbf{b}=(\beta_j)_{j=1}^r \in [0,\infty)^r$,
\begin{equation}\label{eq:q-potential-ratio}
C^{-\sum_{j=1}^r \beta_j}\kappa \sum_{|\iii|=n} \Psi_{\mathbf{Z}, \mathbf{b}}(\iii) \leq \sum_{|\iii|=n} \prod_{j=1}^r \|Q_j \rho_j(g_\iii)\|^{\beta_j} \leq C^{\sum_{j=1}^r \beta_j} \sum_{|\iii|=n} \Psi_{\mathbf{Z}, \mathbf{b}}(\iii)\end{equation}
for all $n \geq 1$, and hence in particular
\begin{equation}\label{eq:general-limit}\lim_{n \to \infty} \frac{1}{n}\log \sum_{|\iii|=n} \prod_{j=1}^r \|Q_j \rho_j(g_\iii)\|^{\beta_j} = P(\Psi_{\mathbf{Z}, \mathbf{b}}). 
\end{equation}
Furthermore, if $\mu \in \mathcal{M}_\sigma(\I^\N)$
is ergodic then the limit 
\[\lim_{n \to \infty} \frac{1}{n}\log \left(\frac{\prod_{j=1}^r \|Q_j \rho_j(g_{\underline{\iii}|_n})\|^{\beta_j}}{\mu([\underline{\iii}|_n])}\right)\]
exists $\mu$-a.e and satisfies 
\[{\ess\sup}_{\mu,\underline{\iii}} \lim_{n \to \infty} \frac{1}{n}\log \left(\frac{\prod_{j=1}^r \|Q_j \rho_j(g_{\underline{\iii}|_n})\|^{\beta_j}}{\mu([\underline{\iii}|_n])}\right)\leq P(\Psi_{\mathbf{Z},\mathbf{b}})\]
with equality if and only if $\mu$ is an equilibrium state of $\Psi_{\mathbf{Z},\mathbf{b}}$.

\item\label{it:exist-subvarieties}
For every  $\mathbf{b} \in [0,\infty)^r$ and $t \in \R$ the sublevel set
\[\mathcal{V}_{\mathbf{b},t}:=\left\{(Q_j)_{j=1}^r \in \prod_{j=1}^r \End(V_j)\colon \lim_{n \to \infty} \frac{1}{n}\log \sum_{|\iii|=n} \prod_{j=1}^r \|Q_j \rho_j(g_\iii)\|^{\beta_j}\leq t\right\}\]
is a subvariety of the affine variety $\prod_{j=1}^r \End(V_j)$. Furthermore
\[|\{\mathcal{V}_{\mathbf{b},t} \colon \mathbf{b} \in [0,\infty)^r\text{ and } t \in \R\}|\leq \prod_{j=1}^r \left(1+2^{n_j}\right),\]
and for every $\mathbf{b} \in [0,\infty)^r$
\[|\{\mathcal{V}_{\mathbf{b},t} \colon t \in \R\}|\leq 1+ \prod_{\substack{1 \leq j \leq r \\ \beta_j \neq 0}} n_j .\]
\end{enumerate}
\end{theorem}
\emph{Historical remarks.} Clauses \ref{it:q-pressure} and \ref{it:exist-subvarieties} of Theorem \ref{th:main-tech} are wholly novel and lack any antecedents in the literature which we are able to discern. 

\ref{it:potential-basics}--\ref{it:max-over-composition-factors}: The first considerations of quasi-multiplicativity in fractal geometry trace back to the work of D.-J. Feng and K.-S. Lau in \cite{FeLa02}, which was gradually extended by Feng in \cite{Fe09} and further by Feng and K\"aenm\"aki in \cite{FeKa11}. The latter treated a particular case (corresponding to $r=1$) of \ref{it:max-over-composition-factors}. The study of the general case ($r \geq 1$ and arbitrary dimension) for $\varphi^s$-equilibrium states, required additional algebraic ideas and  was initiated in \cite{KaMo18} and completed in \cite{BoMo18}. Therefore the case $Q=\id$ above \ref{it:potential-basics} and \ref{it:max-over-composition-factors} recovers these previous results. Strongly quasi-multiplicative potentials were introduced by M. Piraino in \cite{Pi20} in the case $r=1$ and were studied for general $r$ in \cite{Mo21} in a situation corresponding to the particular case of Theorem \ref{th:main-tech}\ref{it:potential-basics} in which every $V_j$ is irreducible as an $\R[G]$-module.

\ref{it:psi-mixing-tech}: In the case $r=1$ only, the existence of equilibrium states which are ergodic but not totally ergodic was originally reported in \cite{Mo18a} and fully characterised in that case in the subsequent work \cite{Mo19b}. A result providing the same structural description as \ref{it:psi-mixing-tech} was presented in \cite{Mo21} for a more particular setting that recovers the case of invertible $Q_j$'s in our result. The results of \cite{Mo21} also do not relate failure of total ergodicity to any relationship between $G^o$ and $G$. The extension of this result to the more general class of potentials needed to prove Theorem \ref{th:pressure-detail} is thus the key achievement of Theorem \ref{th:main-tech}\ref{it:psi-mixing-tech} and also of the strong quasi-multiplicativity clause of \ref{it:potential-basics} on which the former depends.

\bigskip

We lastly note that while the results in this section are formulated over the field $\R$, both Theorem \ref{th:main-tech} in particular and the results of \S\ref{se:technical-core} more broadly go through entirely unchanged if $\R$ is consistently replaced with an arbitrary local field. 

\subsection{Proof of Theorem \ref{th:main-tech}\ref{it:potential-basics}}
Throughout the proof of Theorem \ref{th:main-tech} we let $\Gamma^o_\I$ denote the subsemigroup $\{\iii \in \Gamma_\I\colon g_\iii \in G^o\}\leq \Gamma_\I$. We begin with the arguments needed to demonstrate quasi-multiplicativity. At the core of it lies the following ``bridging lemma'' that was used in various forms by Benoist \cite{benoist.annals, benoist.prop.asymp} and several others (e.g.\ \cite{quint.divergence,Fe09,Wi02}), going back to Abels--Margulis--Soifer \cite{AbMaSo}.

\begin{proposition}\label{pr:long-bridge}
Let $H$ be a Zariski-connected linear algebraic group, $\Gamma<H$ a Zariski-dense subsemigroup of $H$ and for $j=1,\ldots,r$ let $\rho_j \colon H \to \GL(W_j)$ be an irreducible representation. Then there exist a constant $\kappa>0$ and a finite set $F \subset \Gamma$ such that for every $g,g'\in H$, there exists $f \in F$ with the property that
\[\inf_{n \in \N} \min_{1 \leq j \leq r}  \frac{\|\rho_j(g f^ng')\|}{\|\rho_j(g)\|\cdot \|\rho_j(f^n)\|\cdot \|\rho_j(g')\|} > \kappa.\]
\end{proposition}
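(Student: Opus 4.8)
The plan is to prove Proposition~\ref{pr:long-bridge} by combining the Abels--Margulis--Soifer ``ping-pong'' argument with a continuity/compactness argument on products of projective spaces. The key point is that proximality provides attracting fixed points which we can use to ``align'' arbitrary matrices, and the proof splits naturally into two parts: first establishing the existence of suitable proximal elements in $\Gamma$ for \emph{each} representation simultaneously, and second, bridging an arbitrary pair $g,g'$ via a high power of one such element.

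\medskip

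\textbf{Step 1: Reduction to the proximal setting.} First I would invoke the Abels--Margulis--Soifer theorem (or Benoist's version) to produce, for each $j$, elements of $\Gamma$ which are proximal in $\rho_j$. More precisely, since $\Gamma$ is Zariski-dense in the connected group $H$ and each $\rho_j$ is irreducible, the Zariski closure of $\rho_j(\Gamma)$ acts irreducibly, so after replacing each $\rho_j$ by a suitable exterior power if necessary (and using that proximality can be arranged on a well-chosen irreducible subrepresentation of some $\wedge^{k_j} W_j$), one can find a single element $\gamma_0 \in \Gamma$ which is proximal in all the relevant representations simultaneously --- this uses that the set of $(g,g')$ for which a given element fails to ``work'' is a proper subvariety, and a finite intersection of complements of proper subvarieties is nonempty (and in fact Zariski-dense, hence meets $\Gamma$). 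The set $F$ will be built from such elements together with a finite ``correcting'' set needed to handle the finitely many bad positions of $g$ and $g'$ relative to the attracting/repelling data.

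\medskip

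\textbf{Step 2: The bridging estimate via attracting points.} Given $g, g' \in H$, write the polar (Cartan) decomposition data: $\rho_j(g)$ has a ``dominant direction'' and $\rho_j(g')$ has a ``dominant pull-back hyperplane.'' The quantity $\|\rho_j(g f^n g')\|$ is comparable to $\|\rho_j(g)\|\cdot\|\rho_j(f^n)\|\cdot\|\rho_j(g')\|$ precisely when the dominant direction of $\rho_j(g')$ (a point in $\mathbf{P}(W_j)$) is not killed by being too close to the repelling hyperplane of $\rho_j(f^n)$, and symmetrically the image under $\rho_j(f^n)$ --- which concentrates near the attracting point of $f$ --- is not near the repelling hyperplane of $\rho_j(g)$. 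Since for large $n$ the behaviour of $\rho_j(f^n)$ is governed entirely by the attracting point $x_f^+$ and repelling hyperplane $H_f^-$ of the proximal element $f$, this becomes a finite list of ``avoidance'' conditions. The Abels--Margulis--Soifer lemma guarantees that a finite set $F$ of proximal elements can be chosen so that for \emph{any} point $x \in \mathbf{P}(W_j)$ and any hyperplane, at least one $f \in F$ has its attracting point away from all the relevant hyperplanes and its repelling hyperplane away from $x$; then a uniform (in $n \geq 1$) lower bound $\kappa$ follows from compactness of projective space together with the uniform convergence $\rho_j(f^n)/\|\rho_j(f^n)\| \to (\text{rank-one projector onto } x_f^+)$.

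\medskip

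\textbf{Main obstacle and remaining care.} The principal difficulty is making the estimate \emph{uniform in $n \in \N$}: for a single $n$ this is a standard transversality argument, but we need one $\kappa$ and one finite $F$ that work for all powers $f^n$ simultaneously and for all $g,g'$. This is exactly what the Abels--Margulis--Soifer machinery is designed to deliver --- the ``$(r,\varepsilon)$-proximality'' and ``$(r,\varepsilon)$-Schottky'' framework --- so the real work is (i) checking that the element $f$ can be chosen proximal in all representations $\rho_1,\ldots,\rho_r$ at once (handled by the subvariety-intersection argument of Step~1, together with passing to appropriate irreducible subquotients of exterior powers where needed to witness proximality as in \cite{benoist.annals}), and (ii) verifying that the $n$-independence of the contraction rate of $\rho_j(f^n)$ near its attracting point propagates through the three-fold product without deterioration, which follows because the ``correction'' only needs to move $x_f^+$ off finitely many hyperplanes by a definite amount, a condition insensitive to $n$. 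The norm-multiplicativity bookkeeping (relating $\|\rho_j(gf^ng')\|$ to the three separate norms via the Cartan decomposition) is routine once the projective transversality is in hand, and I would carry it out using the standard comparison $\|\rho(g)\| \asymp \|\rho(g) v\|$ for $v$ the top singular direction, again appealing to compactness to absorb constants into $\kappa$.
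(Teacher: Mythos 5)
Your proposal is correct and follows essentially the same route as the paper's proof: reduction to proximal representations (your exterior-power device is exactly the content of the lemma the paper borrows from Benoist--Quint, tracing back to Tits), selection via Zariski-density of elements of $\Gamma$ that are simultaneously proximal in all $\rho_j$ with the required transversality, a compactness/finite-subcover argument to extract the finite set $F$, and uniformity in $n$ from the convergence of $\rho_j(f^n)/\|\rho_j(f^n)\|$ to the rank-one projector (the paper handles the small-$n$ issue by replacing each $f$ with a fixed high power $f^m$, which your $(r,\varepsilon)$-proximality framework delivers equivalently).
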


We will include a brief proof of this proposition for completeness. The proof will require the following lemma which we borrow from \cite[Lemma 4.13 (c)]{BQ.CLT}. It will allow us to work, without loss of generality, with proximal representations. The use of this lemma can be traced back to Tits' \cite{tits.free} in the proof of the Tits alternative.

\begin{lemma}\label{le:prox-redux}
Let $H$ be a group and $\rho \colon H \to \GL(V)$ an irreducible representation. Then there exist a proximal irreducible representation $\hat \rho \colon H \to \GL(\hat{V})$, a real number $C>0$ and an integer $\ell \geq 1$ such that $C^{-1}\leq \|\rho(h)\|^\ell /\|\hat\rho(h)\| \leq C$ for all $h \in H$.  \qed 
\end{lemma}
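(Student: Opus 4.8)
The plan is to reduce the claim to the structure theory of proximal representations of reductive $\R$-groups. First I would note that $\|\rho(h)\|$ depends on $h$ only through $\rho(h)$, so one may replace $H$ by $\rho(H)$ and then by its Zariski closure $G:=\overline{\rho(H)}\le\GL(V)$; this $G$ acts irreducibly on $V$ (stabilising a fixed subspace is a Zariski-closed condition) and faithfully, so its unipotent radical is trivial — a normal unipotent subgroup would have a nonzero fixed space, which would then be a proper invariant subspace — and hence $G$ is reductive. In the situations in which the lemma is applied $H$ is Zariski-connected, so I will work under the hypothesis that $G$ is Zariski-connected (for connected groups this is automatic, irreducibility then coinciding with strong irreducibility). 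It then suffices to show: for $G$ a connected reductive $\R$-group and $\rho\colon G\to\GL(V)$ an irreducible rational representation there are a proximal irreducible rational representation $\hat\rho\colon G\to\GL(\hat V)$, an integer $\ell\ge1$ and $C>0$ with $C^{-1}\le\|\rho(g)\|^\ell/\|\hat\rho(g)\|\le C$ for all $g\in G(\R)$; since any two norms on a given space are equivalent up to a multiplicative constant, this yields the lemma.

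If $G(\R)$ is compact the claim is immediate ($\|\rho(\cdot)\|$ lies between two positive constants, so a trivial $\hat\rho$ and $\ell=1$ suffice), so I would assume $G$ has positive $\R$-rank. Next I would fix a Cartan involution $\theta$ with maximal compact $K=G(\R)^\theta$, a maximal $\R$-split torus with Lie algebra $\mathfrak a$ and closed Weyl chamber $\mathfrak a^+$, and the Cartan decomposition $G(\R)=K\exp(\mathfrak a^+)K$ with Cartan projection $\kappa\colon G(\R)\to\mathfrak a^+$. Equipping each rational representation $(\pi,W)$ with a $\theta$-adapted Euclidean norm — so that $\pi(K)$ acts orthogonally and $\pi(\exp\mathfrak a)$ by positive self-adjoint operators — one gets $\|\pi(g)\|=\|\pi(\exp\kappa(g))\|=e^{\chi_\pi(\kappa(g))}$, where $\chi_\pi$ is the highest restricted weight of $W$, a single linear functional on $\mathfrak a$. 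The problem then collapses to producing a proximal irreducible rational $\hat\rho$ whose highest restricted weight is an integer multiple $\ell\chi_\rho$ of that of $\rho$: for such a $\hat\rho$ one has $\|\hat\rho(g)\|=e^{\ell\chi_\rho(\kappa(g))}=\|\rho(g)\|^\ell$, with $C=1$ for the adapted norms.

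For this last step I would invoke the classical theory of proximal representations of Tits \cite{tits.free} and Benoist \cite{benoist.prop.asymp}: for each restricted simple root $\alpha$ there is an $\R$-proximal irreducible rational representation of $G$ whose highest restricted weight is a positive integer multiple $n_\alpha\varpi_\alpha$ of the fundamental weight $\varpi_\alpha$ (central characters being handled by one-dimensional representations). Writing the dominant weight $\chi_\rho$ in terms of a character and of the $\varpi_\alpha$ with non-negative coefficients, choosing $\ell$ so that all the relevant multiples become non-negative integers, and taking the Cartan component — the irreducible subrepresentation generated by the product of highest-weight vectors — of the corresponding tensor product, one obtains an $\R$-proximal irreducible representation with highest restricted weight $\ell\chi_\rho$, using that Cartan components of tensor products of $\R$-proximal representations are again $\R$-proximal. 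Finally, $\R$-proximality of $\hat\rho$ means that $\hat\rho(g)$ is a proximal element of $\End(\hat V)$ for every $\R$-regular $g\in G$; since the $\R$-rank is positive such elements form a Zariski-dense subset of $G$ and hence meet the Zariski-dense set $\rho(H)$, so the representation $\hat\rho\circ\rho$ of $H$ has proximal image and is the required proximal irreducible representation.

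The only genuinely non-formal ingredient is this last construction of an $\R$-proximal irreducible representation with highest restricted weight a prescribed multiple of $\chi_\rho$; it is standard but rests on the structure theory of reductive $\R$-groups — the existence of fundamental $\R$-proximal representations and the stability of $\R$-proximality under passage to Cartan components of tensor products — so in practice I would cite \cite{tits.free,benoist.prop.asymp,BQ.CLT} for it and carry out in detail only the reductions above and the Cartan-projection identity $\|\pi(g)\|=e^{\chi_\pi(\kappa(g))}$. The conceptual heart of the argument is the observation that, once the Zariski closure of the image is connected, the operator norm in any rational representation is governed through the Cartan projection by a single linear functional (the highest restricted weight), which is exactly what lets a proximal representation with a scaled highest weight reproduce $\|\rho(\cdot)\|^\ell$ up to a constant.
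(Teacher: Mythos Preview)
The paper does not actually prove this lemma: it is stated with a \qed{} and attributed to \cite[Lemma 4.13(c)]{BQ.CLT}. Your sketch follows essentially the same route as the cited source --- pass to the Zariski closure, observe it is reductive, use the Cartan projection to reduce the norm comparison to an equality of highest restricted weights, and then invoke the Tits--Benoist construction of fundamental $\R$-proximal representations to manufacture a proximal irreducible with highest restricted weight $\ell\chi_\rho$. This is correct and is the standard argument.

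One point worth flagging: the lemma as stated in the paper is for an arbitrary group $H$, with no connectedness hypothesis on the Zariski closure, whereas you explicitly restrict to connected $G$. You are right that this suffices for the paper's only application (Proposition~\ref{pr:long-bridge} has $H$ Zariski-connected), and you are transparent about the restriction, so this is not a gap so much as a narrowing of scope. If you wanted to recover the full statement, the disconnected case requires a bit more care: an irreducible $G$-representation restricted to $G^o$ decomposes into irreducibles whose highest restricted weights form a single $G/G^o$-orbit, and one must check that the proximal representation built from $G^o$ extends compatibly. But for the purposes of this paper your version is entirely adequate.
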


\begin{proof}[Proof of Proposition \ref{pr:long-bridge}]

As a consequence of Lemma \ref{le:prox-redux} we may assume without loss of generality that every $\rho_j$ is a proximal representation.

If $q \in \End(W_j)$ is a proximal element, let $\pi_q \in \End(W_j)$ denote the linear projection whose image is the leading eigenspace of $q$ and whose kernel is the complementary invariant hyperplane. Let $\Gamma_{\mathrm{prox}}$ denote the set of all $h \in \Gamma$ with the property that $\rho_j(h)\in \End(W_j)$ is proximal for every $j=1,\ldots,r$. 
We claim that if $q_j, q_j' \in \End(W_j)$ are both nonzero for every $j=1,\ldots,r$, then there exists $h \in \Gamma_{\mathrm{prox}}$ such that  $q_j \pi_{\rho_j(h)} q_j' \neq 0$ for all $j=1,\ldots,r$. Indeed, given nonzero $q_1,\ldots,q_r$ and $q_1',\ldots,q_r'$, by \cite[Lemma 6.25]{BeQu16} there exists $h_0 \in \Gamma_{\mathrm{prox}}$ such that  $q_j \pi_{\rho_j(h_0)} \neq 0$ for every $j=1,\ldots,r$. Now, for each $j=1,\ldots,r$ the sets
\[\mathcal{U}_j:=\left\{ f \in H  \colon q_j \pi_{\rho_j(h_0)}\rho_j(f)q_j' \neq 0\right\},\qquad  \mathcal{V}_j:=\left\{f \in H \colon\tr \pi_{\rho_j(h_0)}\rho_j(f)\neq 0\right\}\]
are clearly both Zariski open. If $\mathcal{U}_j$ were empty then  $\Span \{\rho_j(f)q_j'w \colon f \in  H\text{ and }w \in W_j\}$ would be a nonzero invariant subspace of $W_j$ which is contained in the proper subspace $\ker q_j\pi_{\rho_j(h_0)}$ of $W_j$, contradicting the irreducibility of $\rho_j$, so every $\mathcal{U}_j$ must be nonempty. Clearly every $\mathcal{V}_j$ contains the identity and hence is nonempty also. Since $H$ is Zariski-connected its nonempty Zariski-open subsets are all Zariski dense, so the intersection $\bigcap_{j=1}^r \mathcal{U}_j\cap\mathcal{V}_j$ is a nonempty Zariski-open set and therefore contains an element $f_0 \in \Gamma$, say. For each $j=1,\ldots,r$ the element $\pi_{\rho_j(h_0)}\rho_j(f_0) \in \End(W_j)$ has rank one and nonzero trace, hence is proximal, so $\rho_j(h_0^n f_0)$ must also be proximal for every large enough $n$ since every accumulation point of the sequence $\|\rho_j(h_0^n)\|^{-1}\rho_j(h_0^n f_0) $ is a nonzero scalar multiple of the proximal linear map $\pi_{\rho_j(h_0)}\rho_j(f_0)$. Since moreover $\lim_{n \to \infty} \pi_{\rho_j(h_0^nf_0)} = \pi_{\pi_{\rho_j(h_0)} \rho_j(f_0)}$  for each $j=1,\ldots,r$, it follows that  if $n$ is large enough then $h_0^nf_0 \in \Gamma_{\mathrm{prox}}$ and additionally $q_j \pi_{\rho_j(h_0^nf_0)} q_j' \neq 0$ for every $j=1,\ldots,r$. The claim is proved.

Now, for every $h \in \Gamma_{\mathrm{prox}}$  the set
\[O_h:=\left\{(q_j,q_j')_{j=1}^r \in \prod_{j=1}^r \mathbf{S}_{\End(W_j)}^2  \colon q_j \pi_{\rho_j(h)}q_j' \neq 0\text{ for all }j=1,\ldots,r\right\}\]
is clearly open (in the norm topology) and by the preceding observation the union $\bigcup_{h \in \Gamma_{\mathrm{prox}}} O_h$ covers the compact set $\prod_{j=1}^r \mathbf{S}_{\End(W_j)}^2$. Passing to a finite subcover, we deduce the existence of a finite set $F_0\subset \Gamma_{\mathrm{prox}}$ such that 
\[%\begin{equation}\label{eq.contradictnonzero}
\max_{f \in F_0} \min_{1 \leq j \leq r}\|q_j \pi_{\rho_j(f)}q_j'\|>0\]
%\end{equation}
for all $(q_j, q_j')_{j=1}^r \in \prod_{j=1}^r \mathbf{S}_{\End(W_j)}^2$. Appealing to pointwise convergence as $n \to \infty$ and to equicontinuity, it follows that for some $\kappa>0$ and all large enough $n$
\[\inf_{g,g' \in H} \max_{f \in F_0} \min_{1 \leq j \leq r}\frac{\|\rho_j(g f^ng')\|}{\|\rho_j(g)\|\cdot \|\rho_j(f^n)\|\cdot \|\rho_j(g')\|}>\kappa\]
and the conclusion follows by taking $F:=\{f^m : f\in F_0\}$ with $m$ suitably large. 
\end{proof}

We may now prove the core statements of Theorem \ref{th:main-tech}\ref{it:potential-basics}. We first observe that every $\Psi_{\mathbf{U},\mathbf{b}}$ is submultiplicative: given $\mathbf{U}=(U_j/U_j')_{j=1}^r \in \mathfrak{U}$, $\mathbf{b}=(\beta_j)_{j=1}^r \in [0,\infty)^r$ and $\iii,\jjj \in \Gamma_\I$, choosing $h \in G$ such that
\[\Psi_{\mathbf{U},\mathbf{b}}(\iii\jjj) = \prod_{j=1}^r \left\|\rho_j(g_{\iii\jjj})\right\|^{\beta_j}_{\frac{\rho_j(h)U_j}{\rho_j(h)U_j'} \to \frac{\rho_j(g_{\iii\jjj}h)U_j}{\rho_j(g_{\iii\jjj} h)U_j'}}\]
yields
\[\Psi_{\mathbf{U},\mathbf{b}}(\iii\jjj) \leq\left(\prod_{j=1}^r\left\|\rho_j(g_{\iii})\right\|^{\beta_j}_{\frac{\rho_j(g_\jjj h)U_j}{\rho_j(g_\jjj h)U_j'} \to \frac{\rho_j(g_{\iii\jjj}h)U_j}{\rho_j(g_{\iii\jjj} h)U_j'}} \right)\left(\prod_{j=1}^r\left\|\rho_j(g_{\jjj})\right\|^{\beta_j}_{\frac{\rho_j(h)U_j}{\rho_j(h)U_j'} \to \frac{\rho_j(g_{\jjj}h)U_j}{\rho_j(g_{\jjj} h)U_j'}} \right) \]
and the latter is clearly bounded above by $\Psi_{\mathbf{U},\mathbf{b}}(\iii) \Psi_{\mathbf{U},\mathbf{b}}(\jjj)$.

We may now approach the quasi-multiplicativity and strong quasi-multiplicativity clauses of Theorem \ref{th:main-tech}\ref{it:potential-basics}. Fix $\mathbf{U}=(U_j/U_j')_{j=1}^r \in \mathfrak{U}_S$ and $\mathbf{b}=(\beta_j)_{j=1}^r\in [0,\infty)^r$. We claim that there exist a finite set $F_1\subset \Gamma^o_\I$ and real number $\tau_1>0$ such that for every $\iii,\jjj \in \Gamma^o_\I$
\begin{equation}\label{eq:new-tau-claim}
\max_{\jjj \in F_1} \prod_{j=1}^r \|\rho_j(g_{\iii_1} g_\jjj g_{\iii_2})\|^{\beta_j}_{U_j/U_j'} \geq \tau_1\left( \prod_{j=1}^r\|\rho_j(g_{\iii_1})\|^{\beta_j}_{U_j/U_j'}\right)\left( \prod_{j=1}^r\|\rho_j(g_{\iii_2})\|^{\beta_j}_{U_j/U_j'}\right)\end{equation}
and such that additionally every word in $F_1$ has the same length. To see this we apply Proposition \ref{pr:long-bridge} with $H:=G^o$, $\Gamma:=\{g_\iii \in G^o \colon \iii \in \Gamma^o_\I\}$, $W_j:=U_j/U_j'$ and with each representation $G^o \to \GL(U_j/U_j')$ being the representation induced by the corresponding representation $\rho_j \colon G \to \GL(V_j)$;  these representations are irreducible since $\mathbf{U}\in\mathfrak{U}_S$. Let $F\subset \Gamma$ be the finite set and $\kappa>0$ the constant given by that proposition.
Fix $\jjj_1,\ldots,\jjj_p \in \Gamma_\I^o$ such that $F=\{g_{\jjj_t} \colon 1 \leq t \leq p\}$ and let $n$ be a large integer which is divisible by $|\jjj_t|$ for every $t=1,\ldots,p$. Clearly we have
\begin{equation}
    \label{eq:pre-tau}
\max_{1 \leq t \leq p} \min_{1 \leq j \leq r} \frac{\left\|\rho_j\left(g_{\iii_1} g_{\jjj_t}^{n/|\jjj_t|} g_{\iii_2}\right)\right\|_{U_j/U_j'}}{\left\|\rho_j\left(g_{\iii_1}\right)\right\|_{U_j/U_j'}\left\|\rho_j\left(g_{\jjj_t}^{n/|\jjj_t|}\right)\right\|_{U_j/U_j'}\left\|\rho_j\left(g_{\iii_2}\right)\right\|_{U_j/U_j'}}>\kappa\end{equation}
for every $\iii_1,\iii_2 \in \Gamma_\I^o$ and for every such $n \in \N$, so we define
\[F_1:=\left\{\jjj_t^{n/|\jjj_t|} \colon 1 \leq t \leq p\right\}\subseteq \{\jjj \in \Gamma_\I^o \colon |\jjj|=n\}\]
and
\[\tau_1:= \prod_{j=1}^r\left( \kappa \cdot \min_{\jjj \in F_1} \|\rho_j(g_\jjj)\|_{U_j/U_j'}\right)^{\beta_j}\]
and observe that \eqref{eq:new-tau-claim} now follows from \eqref{eq:pre-tau} by simple rearrangements. 

We may now complete the proof. Fix a finite set $F_2 \subseteq \Gamma_\I$ such that $\bigcup_{\kkk \in F_2}  g_\kkk G^o   = G$. Given arbitrary $\iii_1, \iii_2 \in \Gamma_\I$ we may choose $\kkk_1, \kkk_2 \in F_2$ such that \[\Psi_{\mathbf{U},\mathbf{b}}(\iii_t)=\prod_{j=1}^r \left\|\rho_j(g_{\iii_t})\right\|^{\beta_j}_{\frac{\rho_j(g_{\kkk_t})U_j}{\rho_j(g_{\kkk_t})U_j'}\to \frac{\rho_j(g_{\iii_t}g_{\kkk_t})U_j}{\rho_j(g_{\iii_t}g_{\kkk_t)U_j'}}}\]
for $t=1,2$, and we may further choose $\kkk_1',\kkk_2' \in F_2$ such that $\kkk_t'\iii_t\kkk_t\in \Gamma^o_\I$ for $t=1,2$. We observe that
\[\prod_{j=1}^r \left\|\rho_j(g_{\kkk_t'\iii_t\kkk_t})\right\|^{\beta_j}_{U_j/U_j'}\geq \tau_2 \Psi_{\mathbf{U},\mathbf{b}}(\iii_t)\]
for $t=1,2$, where \[\tau_2:=\left(\max_{\kkk \in F_2} \prod_{j=1}^r \left\|\rho_j(g_\kkk)^{-1}\right\|^{\beta_j}_{V_j}  \right)^{-2}.\] Applying \eqref{eq:new-tau-claim} yields
\begin{align*}\max_{\jjj \in F_1} \Psi_{\mathbf{U},\mathbf{b}}(\kkk_1'\iii_1 \kkk_1 \jjj\kkk_2'\iii_2 \kkk_2) &\geq \max_{\jjj \in F_1} \prod_{j=1}^r \left\|\rho_j(g_{\kkk_1'\iii_1 \kkk_1 \jjj\kkk_2'\iii_2 \kkk_2})\right\|^{\beta_j}_{U_j/U_j'} \\
&\geq \tau_1 \left(\prod_{j=1}^r \left\|\rho_j(g_{\kkk_1'\iii_1 \kkk_1})\right\|^{\beta_j}_{U_j/U_j'}\right)\left(\prod_{j=1}^r \left\|\rho_j(g_{\kkk_2'\iii_2 \kkk_2})\right\|^{\beta_j}_{U_j/U_j'}\right) \\
&\geq \tau_1\tau_2^2 \Psi_{\mathbf{U},\mathbf{b}}(\iii_1) \Psi_{\mathbf{U},\mathbf{b}}(\iii_2)\end{align*}
and it follows directly that
\[\max_{\jjj \in F_1} \max_{\kkk_1, \kkk_2' \in F_2} \Psi_{\mathbf{U},\mathbf{b}}(\iii_1 \kkk_1 \jjj \kkk_2' \iii_2) \geq \tau_1\tau_2^2\left(\max_{\kkk \in F_2} \Psi_{\mathbf{U},\mathbf{b}}(\kkk)\right)^{-2} \Psi_{\mathbf{U},\mathbf{b}}(\iii_1)\Psi_{\mathbf{U},\mathbf{b}}(\iii_2).\]
Since $\iii_1, \iii_2 \in \Gamma_\I$ were arbitrary, we have proved quasi-multiplicativity. If the additional condition $\bigcup_{|\kkk|=m} g_\kkk G^o=G$ is satisfied for some integer $m \geq 1$ then clearly we may choose $F_2\subseteq\{\kkk \in \Gamma_\I \colon |\kkk|=m\}$ in the preceding argument, in which case every element of $F_2F_1F_2$ is a word of the same length $n+2m$ and the preceding arguments demonstrate strong quasi-multiplicativity of $\Psi_{\mathbf{U},\mathbf{b}}$. The proof of the final clause of Theorem \ref{th:main-tech}\ref{it:potential-basics}, concerning equivalence of potentials, is elementary.

\subsection{Proof of Theorem \ref{th:main-tech}\ref{it:max-over-composition-factors}}
Throughout this section we fix $\mathbf{U}=(U_j/U_j')_{j=1}^r\in \mathfrak{U}$ and $\mathbf{b}=(\beta_j)_{j=1}^r \in [0,\infty)^r$. For each $j=1,\ldots,r$ we also fix a filtration of $\R[G^o]$-modules,
\begin{equation}\label{eq:series}\{0\}\leq U_j'=U_j^0<U_j^1<\cdots <U_j^{m_j}=U_j,\end{equation}
with the property that each of the quotients $U^\ell_j/U^{\ell-1}_j$ is a simple $\R[G^o]$-module. 
Let $\mathcal{L}$ denote the set of all tuples $\mathfrak{l}=(\ell_j)_{j=1}^r\in \N^r$ which satisfy $1 \leq \ell_j \leq m_j$ for every $j=1,\ldots,r$, and for each $\mathfrak{l}=(\ell_j)_{j=1}^r\in\mathcal{L}$ define $\mathbf{U}_{\mathfrak{l}}:=(U^{\ell_j}_j/U^{\ell_j-1}_j)_{j=1}^r \in \mathfrak{U}_S$. Evidently $\mathbf{U}_{\mathfrak{l}} \preceq \mathbf{U}$ for every $\mathfrak{l}\in\mathcal{L}$. Given any $\hat{\mathbf{U}}=(\hat{U}_j/\hat{U}_j')_{j=1}^r \in \mathfrak{U}_S$ satisfying $\hat{\mathbf{U}}\preceq \mathbf{U}$, it follows from the Jordan-H\"older theorem 
that each  $\hat{U}_j/\hat{U}'_j$ is isomorphic as an $\R[G^o]$-module to some quotient module $U_j^{\ell_j}/U_j^{\ell_j-1}$ arising from \eqref{eq:series}. Defining $\mathfrak{l}:=(\ell_j)_{j=1}^r \in \mathcal{L}$ it follows directly that the potentials $\Psi_{\hat{\mathbf{U}},\mathbf{b}}$ and $\Psi_{\mathbf{U}_{\mathfrak{l}},\mathbf{b}}$ are equivalent. This demonstrates that
\[\max_{\mathfrak{l}\in\mathcal{L}} P\left(\Psi_{\mathbf{U}_{\mathfrak{l}},\mathbf{b}}\right) = \max_{\substack{\hat{\mathbf{U}}\in \mathfrak{U}_S \\ \hat{\mathbf{U}}\preceq \mathbf{U}}} P\left(\Psi_{\hat{\mathbf{U}},\mathbf{b}}\right)\]
and so to prove Theorem \ref{th:main-tech}\ref{it:max-over-composition-factors} it is sufficient for us to show that 
\begin{equation}
    \label{eq:max-L}
\max_{\mathfrak{l}\in\mathcal{L}} P\left(\Psi_{\mathbf{U}_{\mathfrak{l}},\mathbf{b}}\right) = P\left(\Psi_{\mathbf{U},\mathbf{b}}\right)\end{equation}
and that each  ergodic measure $\mu \in \mathcal{M}_\sigma(\I^\N)$ is an equilibrium state of $\Psi_{\mathbf{U},\mathbf{b}}$ if and only if it is an equilibrium state of a potential $\Psi_{\mathbf{U}_{\mathfrak{l}},\mathbf{b}}$ whose pressure equals that of $P(\Psi_{\mathbf{U},\mathbf{b}})$. 
In view of Theorem \ref{th:properties-of-potentials}\ref{it:variational-principle} and of the inequality $\max_{\mathfrak{l} \in\mathfrak{L}} \Psi_{\mathbf{U}_{\mathfrak{l}},\mathbf{b}} \leq \Psi_{\mathbf{U},\mathbf{b}}$ which follows directly from the definition of the tuples $\mathbf{U}_{\mathfrak{l}}$,  both statements will hold if we can show that for every ergodic measure $\mu \in \mathcal{M}_\sigma(\I^\N)$,
\[\lim_{n \to \infty}\frac{1}{n}\int_{\I^\N}\log \Psi_{\mathbf{U},\mathbf{b}}(\underline{\iii}|_n)d\mu(\underline{\iii}) =\max_{\mathfrak{l}\in \mathcal{L}}\lim_{n \to \infty}\frac{1}{n}\int_{\I^\N}\log \Psi_{\mathbf{U}_{\mathfrak{l}},\mathbf{b}}(\underline{\iii}|_n)d\mu(\underline{\iii}),\]
and by the subadditive ergodic theorem this in turn is satisfied if and only if for every ergodic measure $\mu \in \mathcal{M}_\sigma$, for $\mu$-a.e.\ $\underline{\iii} \in \I^\N$
\begin{equation}\label{eq:quod-est-demonstrandum}
    \lim_{n \to \infty} \frac{1}{n} \log \Psi_{\mathbf{U},\mathbf{b}}(\underline{\iii}|_n) = \max_{\mathfrak{l}\in\mathcal{L}} \lim_{n \to \infty} \frac{1}{n} \log \Psi_{\mathbf{U}_{\mathfrak{l}},\mathbf{b}}(\underline{\iii}|_n).
\end{equation}
For the remainder of this section, therefore, we fix an ergodic measure $\mu \in \mathcal{M}_\sigma(\I^\N)$ and proceed to verify \eqref{eq:quod-est-demonstrandum}. We will apply the following lemma\footnote{We take this opportunity to note an alternative proof to the argument followed in \cite{Mo12}. By passing to an exterior power one may assume without loss of generality that the top Lyapunov exponent is simple; and under this assumption the method used by Avila and Bochi in \cite[Theorem 15]{AvBo02}, relying on Poincar\'e recurrence of the Oseledets spaces to demonstrate that the limit-superior exponential growth rate of the trace equals the top Lyapunov exponent, may be applied. This approach also generalises straightforwardly to the case of vector spaces over an arbitrary local field. On the other hand this simpler method is not applicable in the infinite-dimensional context which was the main subject of \cite{Mo12}.}, which may be found in \cite{Mo12}. In the following statement and the remainder of this subsection $\spectralradius(B)$ \label{notation:specrad} denotes the spectral radius of a linear map $B \in \End(V)$.
\begin{lemma}\label{le:Mo12} 
Let $T$ be an ergodic measure-preserving transformation of a probability space $(X,\mathcal{F},\nu)$, let $V$ be a finite-dimensional real vector space  and let $\mathcal{A} \colon X \times \mathbb{N} \to \GL(V)$ be a measurable function. Suppose that $\mathcal{A}(x,n+m)=\mathcal{A}(x,n)\mathcal{A}(T^nx,m)$ a.e.\ for every $n,m \geq 1$ and that $\int_X \log \|\mathcal{A}(x,1)\|d\nu(x)<\infty$. Then for $
\nu$-a.e.\ $x \in X$,
\[\limsup_{n \to \infty} \frac{1}{n}\log\spectralradius(\mathcal{A}(x,n)) = \lim_{n \to \infty} \frac{1}{n}\log \|\mathcal{A}(x,n)\|.\]
\end{lemma}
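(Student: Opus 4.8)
The plan is to reduce, via a passage to an exterior power, to the case where the top Lyapunov exponent of the cocycle $\mathcal{A}$ is simple, and then to exploit Poincaré recurrence of the Oseledets decomposition to relate the spectral radius growth to the norm growth. Let me sketch this in more detail.

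First I would record the trivial inequality $\limsup_{n\to\infty}\frac1n\log\spectralradius(\mathcal{A}(x,n))\le \lim_{n\to\infty}\frac1n\log\|\mathcal{A}(x,n)\|$, which holds pointwise since $\spectralradius(B)\le\|B\|$ for any operator $B$, and note that the right-hand limit exists a.e.\ and is a.e.\ constant by Kingman's subadditive ergodic theorem applied to $n\mapsto\log\|\mathcal{A}(x,n)\|$ (subadditivity follows from the cocycle identity and submultiplicativity of the operator norm; integrability of the positive part follows from $\int\log\|\mathcal{A}(x,1)\|d\nu<\infty$, while the $\GL(V)$ hypothesis controls the negative part). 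Call this common value $\lambda_1$. The task is the reverse inequality, i.e.\ to produce infinitely many $n$ with $\spectralradius(\mathcal{A}(x,n))\ge e^{(\lambda_1-\varepsilon)n}$ for each $\varepsilon>0$.

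Next, by replacing $V$ with $\wedge^p V$ and $\mathcal{A}$ with $\wedge^p\mathcal{A}$ for the appropriate $p$ (the largest index of an Oseledets subspace realizing the top exponent), one reduces to the case where the top Oseledets exponent is simple, because the top exponent of $\wedge^p\mathcal{A}$ is $\lambda_1+\cdots+\lambda_p$ and the corresponding Oseledets space is one-dimensional; moreover $\spectralradius$ and $\|\cdot\|$ transform multiplicatively enough under this operation that the statement for $\wedge^p\mathcal{A}$ implies it for $\mathcal{A}$ (one must check the elementary algebra relating the top eigenvalue of $B$ to that of $\wedge^p B$, and likewise for norms up to a dimensional constant, which is routine). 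So assume henceforth the top exponent $\lambda_1$ is simple with one-dimensional Oseledets line field $E(x)$ and complementary $\mathcal{A}$-invariant hyperplane field $H(x)$. Oseledets' theorem then gives $\frac1n\log\|\mathcal{A}(x,n)v\|\to\lambda_1$ for $v\notin H(x)$, uniformly on compact sets of angles away from $H(x)$ in a suitable sense.

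The key step, following the Avila--Bochi method (\cite[Theorem 15]{AvBo02}), is this: fix a point $x$ in a positive-measure set $K$ on which the angle between $E(x)$ and $H(x)$ is bounded below, on which the Oseledets convergence is suitably uniform, and on which $\|\mathcal{A}(\cdot,1)^{\pm1}\|$ is bounded. By Poincaré recurrence, $\nu$-a.e.\ $x\in K$ returns to $K$ at infinitely many times $n_1<n_2<\cdots$. At such a return time $n$, the operator $\mathcal{A}(x,n)$ maps a neighborhood of $E(x)$ to a neighborhood of $E(T^nx)$, and since both $x,T^nx\in K$ we can estimate the trace (or a matrix coefficient) of $\mathcal{A}(x,n)$ against the line $E(x)$ from below by roughly $\|\mathcal{A}(x,n)|_{E(x)}\|$ times a uniform constant depending only on the geometry of $K$; but $\|\mathcal{A}(x,n)|_{E(x)}\|\approx e^{\lambda_1 n}$ by Oseledets. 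Since $\spectralradius(\mathcal{A}(x,n))\ge$ (trace divided by dimension) in absolute value is not quite right, one argues instead that the restriction of $\mathcal{A}(x,n)$ to the $\mathcal{A}(x,n)$-cyclic structure forces an eigenvalue of modulus $\gtrsim e^{(\lambda_1-\varepsilon)n}$; concretely, since $T^nx\in K$, the line $E(T^nx)$ is uniformly transverse to $H(T^nx)$, and iterating $\mathcal{A}$ along the orbit one builds from the near-invariance of a cone around the line field an honest eigenvalue via a fixed-point/Brouwer argument on the projective cone, of size comparable to $\|\mathcal{A}(x,n)\|\asymp e^{\lambda_1 n}$. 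Taking $n=n_k\to\infty$ yields the desired $\limsup$ lower bound, completing the proof.

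The main obstacle is making the cone argument at return times fully rigorous: one needs the return point $T^nx$ to lie not merely in $K$ but in a subset where the forward Oseledets cone around $E$ is genuinely $\mathcal{A}$-invariant with controlled contraction, and to verify that the composition $\mathcal{A}(x,n)$ preserves a cone based at $x$ mapping into the cone based at $T^nx$ whose "overlap" produces an eigenvector — this is where the transversality of $E(x)$ and $H(x)$ on $K$, together with the uniform Oseledets estimates on $K$, must be combined carefully. The exterior-power reduction and the existence/constancy of $\lambda_1$ are routine; the transfer of the conclusion back from $\wedge^p\mathcal{A}$ to $\mathcal{A}$ requires only bookkeeping. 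I would remark, as the footnote already does, that this approach has the bonus of working verbatim over any local field, whereas the original argument of \cite{Mo12} was designed for an infinite-dimensional setting where cones of this kind are unavailable.
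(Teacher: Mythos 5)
Your overall strategy --- reduce to a simple top exponent via an exterior power and then exploit Poincar\'e recurrence of the Oseledets data --- is exactly the alternative proof sketched in the paper's footnote to this lemma (the paper itself cites \cite{Mo12} for the full argument). The trivial inequality $\spectralradius\le\|\cdot\|$, the Kingman step, and the exterior-power reduction are all fine. The difficulty is at the one step that actually matters, and there your proposal has a genuine gap.

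You write that ``$\spectralradius(\mathcal{A}(x,n))\ge$ (trace divided by dimension) in absolute value is not quite right'' and on that basis abandon the trace in favour of a cone/Brouwer argument. But the inequality \emph{is} right: for $B\in\End(V)$ with eigenvalues $\mu_1,\dots,\mu_d$ one has $|\tr B|\le\sum_i|\mu_i|\le(\dim V)\,\spectralradius(B)$, so $\spectralradius(B)\ge|\tr B|/\dim V$. This is precisely why Avila--Bochi work with the trace: once $\limsup_n\frac1n\log|\tr\mathcal{A}(x,n)|=\lambda_1$ a.e.\ is established, the spectral radius statement is immediate. Your replacement argument is, as you yourself flag, not carried out, and as stated it fails: knowing only that $x$ and $T^nx$ both lie in a set $K$ on which the angle between $E$ and $H$ is bounded below and the Oseledets convergence is uniform does \emph{not} force $\mathcal{A}(x,n)$ to have a large eigenvalue, because $\mathcal{A}(x,n)$ may carry the expanding line $E(x)$ onto a line near the contracting hyperplane $H(x)$ --- compare $\bigl(\begin{smallmatrix}0&M\\-1/M&0\end{smallmatrix}\bigr)$, which has norm $M$ but spectral radius $1$ --- so no cone based at $x$ maps into itself. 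The missing ingredient is to shrink $K$ (via Lusin/Egorov) so that the measurable splitting $x\mapsto(E(x),H(x))$ has small oscillation on $K$; then at return times $E(T^nx)$ is close to $E(x)$ and $H(T^nx)$ to $H(x)$, and in a basis adapted to the splitting at $x$ the matrix of $\mathcal{A}(x,n)$ is a small perturbation of a block-diagonal one whose $1\times1$ block has modulus $\approx e^{\lambda_1 n}$, at which point either the trace inequality or a genuine invariant-cone argument yields the required eigenvalue. With that correction (and preferably with the trace inequality reinstated, which makes the conclusion immediate), the proof closes.
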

\begin{remark}
    An alternative proof can be pursued replacing the use of this lemma by Hennion's \cite[Proposition 1]{hennion}, who, together with Furstenberg--Kifer \cite{furstenberg-kifer} studied random matrix products without any algebraic assumptions and provided an invariant deterministic flag of subspaces which distinguishes growth of the matrices.
\end{remark}

Given $g \in G^o$ and a subquotient  $\R[G^o]$-module $W_j$ of $V_j$, we write $\spectralradius_{W_j}(\rho_j(g))$ for the spectral radius of the linear transformation of $W_j$ induced by $\rho_j(g)$. For each $h \in G$ and $g \in G^o$, by elementary linear algebra the characteristic polynomial of $\rho_j(g)$ acting on the vector space $\rho_j(h)U_j/\rho_j(h)U_j'$ factorises into the product over $\ell=1,\ldots,m_j$ of the characteristic polynomials of $\rho_j(g)$ acting on the vector spaces $\rho_j(h)U_j^\ell /\rho_j(h)U_j^{\ell-1}$, so in particular
\begin{equation}\label{eq:rho-split}\spectralradius_{\frac{\rho_j(h)U_j}{\rho_j(h)U_j'}}(\rho_j(g)) = \max_{1 \leq \ell \leq m_j} \spectralradius_{\frac{\rho_j(h)U_j^\ell}{\rho_j(h)U_j^{\ell-1}}}(\rho_j(g))\end{equation}
for every $j=1,\ldots,r$, $h \in G$ and $g \in G^o$.

Now let $\Lambda \subseteq \I^\N$ denote the set of all $\underline{\iii} \in \I^\N$ such that $g_{\underline{\iii}|_n} \in G^o$ for infinitely many $n\geq1$, 
 which is clearly a Borel set. For each $\underline{\iii} \in \Lambda$ and $n \geq 1$ define $\mathfrak{r}(\underline{\iii},n)$ \label{notation:return} to be the $n^{\mathrm{th}}$-smallest integer $k\geq1$   such that $g_{\underline{\iii}|_k}\in G^o$. We observe that the identity $\mathfrak{r}(\underline{\iii},n+m) = \mathfrak{r}(\sigma^{\mathfrak{r}(\underline{\iii},n)}\underline{\iii},m)+\mathfrak{r}(\underline{\iii},n)$ is satisfied for all $\underline{\iii}\in \Lambda$ and $n,m \geq 1$.  We claim that $\mu(\Lambda)=1$; that $\int_\Lambda \mathfrak{r}(\cdot,1)d\mu<\infty$; and that for $\mu$-a.e.\ $\underline{\iii} \in \I^\N$
\begin{align}\label{eq:sensible-internal-tag}
{\lefteqn{\lim_{n \to \infty} \frac{1}{\mathfrak{r}(\underline{\iii},n)}\log \left\|\rho_j\left(g_{\underline{\iii}|_{\mathfrak{r}(\iii,n)}}\right)\right\|_{\frac{\rho_j(h)U_j}{\rho_j(h)U_j'}}}}&\\\nonumber
&=\max_{1 \leq \ell \leq m_j}\lim_{n \to \infty} \frac{1}{\mathfrak{r}(\underline{\iii},n)}\log \left\|\rho_j\left(g_{\underline{\iii}|_{\mathfrak{r}(\iii,n)}}\right)\right\|_{\frac{\rho_j(h)U_j^\ell}{\rho_j(h)U_j^{\ell-1}}}    
\end{align}
for all $j=1,\ldots,r$ and $h \in G$. 

To prove these assertions 
we will modify the dynamical system $\sigma \colon \I^\N \to \I^\N$ in two ways. First, define a continuous function $T \colon \I^\N \times G/G^o\to\I^\N \times G/G^o$ by $T(\underline{\iii}, h G^o):=(\sigma \underline{\iii}, g_{i_1}^{-1} h G^o)$ and choose arbitrarily  an ergodic $T$-invariant measure $\nu$ on $\I^\N \times G/G^o$ which projects to the measure $\mu$ on $\I^\N$ and satisfies $\nu(\I^\N \times \{G^o\})>0$. (For example, we could take an ergodic decomposition of the $T$-invariant measure $[G : G^o]^{-1} \sum_{h G^o \in G/G^o} \mu \times \delta_{h G^o}$ and choose $\nu$ to be an arbitrary ergodic component satisfying $\nu(\I^\N \times \{G^o\}) \geq [G:G^o]^{-1}$.) Clearly $T^n (\underline{\iii},hG^o)=(\sigma^n\underline{\iii}, g_{\underline{\iii}|_n}^{-1}hG^o)$ for every $n \geq 1$ and $(\underline{\iii},hG^o) \in \I^\N \times G/G^o$, and for every coset  $hG^o \in G/G^o$ the set $\Lambda \times \{hG^o\}$ is easily seen to equal the set of all $(\underline{\iii},hG^o) \in \I^\N \times \{hG^o\}$ which return infinitely many times to $\I^\N \times \{hG^o\}$ under the action of $T$. Using the Poincar\'e recurrence theorem we find that 
\begin{align*}
\mu(\Lambda)=\nu\left(\Lambda \times G/G^o\right) &= \sum_{h G^o \in G/G^o} \nu\left(\Lambda \times \{hG^o\}\right)\\
&=\sum_{hG^o \in G/G^o} \nu\left(\I^\N \times \{hG^o\}\right)=\nu\left(\I^\N \times G/G^o\right)=1
\end{align*}
as required.  
To see that $\int_\Lambda \mathfrak{r}(\cdot,1)d\mu<\infty$ we note that the restriction of $\mathfrak{r}(\cdot,1)$ to $\Lambda \times \{hG^o\}$ is precisely the time of first return to $\Lambda \times \{hG^o\}$ under the transformation $T$, so using Kac's Lemma
\begin{align*}\int_\Lambda \mathfrak{r}(\cdot,1)d\mu&=\int_{\Lambda \times G/G^o}\mathfrak{r}(\cdot,1)d \nu\\
&=\sum_{hG^o \in G/G^o} \int_{\Lambda \times \{hG^o\}} \mathfrak{r}(\cdot,1)d\nu=\sum_{\substack{hG^o \in G/G^o\\ \nu(\I^\N\times \{hG^o\})>0}}\frac{1}{\nu(\I^\N \times \{hG^o\})}<\infty\end{align*}
as claimed. To recover \eqref{eq:sensible-internal-tag} we make a second modification to the dynamical system: we define $\sigma_{\mathfrak{r}} \colon \Lambda \to \Lambda$ by $\sigma_{\mathfrak{r}}(\underline{\iii}):=\sigma^{\mathfrak{r}(\underline{\iii},1)}\underline{\iii}$. We observe that by a simple induction $\sigma_{\mathfrak{r}}^n(\underline{\iii})=\sigma^{\mathfrak{r}(\underline{\iii},n)}(\underline{\iii})$ for all $n \geq 1$ and $\underline{\iii}\in \Lambda$. The first-return map of $T$ on $\Lambda \times \{G^o\}$ is precisely $(\underline{\iii},G^o)\mapsto (\sigma_{\mathfrak{r}}\underline{\iii},G^o)$, and since the first-return map of an ergodic transformation is also ergodic, it follows trivially that $\sigma_{\mathfrak{r}}$ is ergodic with respect to the measure $\mu$ on $\Lambda$. Now for each $j=1,\ldots,r$ and $hG^o \in G/G^o$ define a function $\mathcal{A}_{j,hG^o} \colon \Lambda \times \N \to \GL(\rho_j(h)U_j/\rho_j(h)U_j')$ by taking $\mathcal{A}_{j,hG^o}(\underline{\iii},n)$ to be the linear transformation of the $\R[G^o]$-module $\rho_j(h)U_j/\rho_j(h)U_j'$ which is induced by $\rho_j(g_{\underline{\iii}|_{\mathfrak{r}(\underline{\iii},n)}})$, and similarly for each $j=1,\ldots,r$, $\ell=1,\ldots,m_j$ and $hG^o \in G/G^o$ let $\mathcal{A}_{j,\ell,hG^o}(\underline{\iii},n)$ denote the linear transformation of $\rho_j(h)U_j^\ell/\rho_j(h)U_j^{\ell-1}$ induced by $\rho_j(g_{\underline{\iii}|_{\mathfrak{r}(\underline{\iii},n)}})$. Each of these functions $\mathcal{A}$ satisfies the right cocycle identity $\mathcal{A}(\underline{\iii},m+n)\equiv \mathcal{A}(\underline{\iii},n)\mathcal{A}(\sigma_{\mathfrak{r}}^n\underline{\iii},m)$, and in each case the integrability of $\log \|\mathcal{A}(\cdot,1)\|$ follows easily from the integrability of $\mathfrak{r}(\cdot,1)$. We may therefore apply Lemma \ref{le:Mo12} and the subadditive ergodic theorem to the map $\sigma_{\mathfrak{r}}$ to obtain, for every $j=1,\ldots,r$ and $h G^o \in G/G^o$ and for $\mu$-almost-every $\underline{\iii}\in \Lambda$,
\begin{align*}\lim_{n \to \infty} \frac{1}{n}\log \|\rho_j(g_{\underline{\iii}|_{\mathfrak{r}(\underline{\iii},n)}})\|_{\frac{\rho_j(h)U_j}{\rho_j(h)U_j'}}&=\limsup_{n \to \infty} \frac{1}{n}\log \spectralradius_{\frac{\rho_j(h)U_j}{\rho_j(h)U_j'}}(\rho_j(g_{\underline{\iii}|_{\mathfrak{r}(\underline{\iii},n)}})) \\
&=\limsup_{n \to \infty} \frac{1}{n}\log \max_{1 \leq \ell \leq m_j}\spectralradius_{\frac{\rho_j(h)U_j^\ell}{\rho_j(h)U_j^{\ell-1}}}(\rho_j(g_{\underline{\iii}|_{\mathfrak{r}(\underline{\iii},n)}})) \\
&=\max_{1 \leq \ell \leq m_j}\limsup_{n \to \infty} \frac{1}{n}\log \spectralradius_{\frac{\rho_j(h)U_j^\ell}{\rho_j(h)U_j^{\ell-1}}}(\rho_j(g_{\underline{\iii}|_{\mathfrak{r}(\underline{\iii},n)}})) \\
&=\max_{1 \leq \ell \leq m_j}\lim_{n \to \infty} \frac{1}{n}\log \left\|\rho_j(g_{\underline{\iii}|_{\mathfrak{r}(\underline{\iii},n)}})\right\|_{\frac{\rho_j(h)U_j^\ell}{\rho_j(h)U_j^{\ell-1}}},\end{align*}
where we have used \eqref{eq:rho-split}. By the Birkhoff ergodic theorem %which I use weirdly rarely, all things considered,
\[\lim_{n \to \infty} \frac{\mathfrak{r}(\underline{\iii},n)}{n} = \lim_{n \to \infty} \frac{1}{n}\sum_{k=0}^{n-1}\mathfrak{r}(\sigma_{\mathfrak{r}}^k\underline{\iii},1) = \int_\Lambda \mathfrak{r}(\cdot,1)d\mu \in [1,\infty)\]
and the result \eqref{eq:sensible-internal-tag} follows directly.

We may now prove the desired result \eqref{eq:quod-est-demonstrandum}. Using the subadditive ergodic theorem with respect to $\sigma$, for almost every $\underline{\iii} \in \Lambda$
\begin{align*} {\lefteqn{ \lim_{n \to \infty} \frac{1}{n}\log \Psi_{\mathbf{U},\mathbf{b}}(\underline{\iii}|_n) }}&\\
&=\lim_{n \to \infty} \frac{1}{\mathfrak{r}(\underline{\iii},n)}\log \Psi_{\mathbf{U},\mathbf{b}}(\underline{\iii}|_{\mathfrak{r}(\underline{\iii},n)})\\
&=\lim_{n \to \infty} \frac{1}{\mathfrak{r}(\underline{\iii},n)}\max_{hG^o \in G/G^o} \sum_{j=1}^r \beta_j \log \|\rho_j(g_{\underline{\iii}|_{\mathfrak{r}(\underline{\iii},n)}})\|_{\frac{\rho_j(h)U_j}{\rho_j(h)U_j'}}  \\
&=\max_{hG^o \in G/G^o}\sum_{j=1}^r \lim_{n \to \infty}  \frac{\beta_j}{\mathfrak{r}(\underline{\iii},n)} \log \|\rho_j(g_{\underline{\iii}|_{\mathfrak{r}(\underline{\iii},n)}})\|_{\frac{\rho_j(h)U_j}{\rho_j(h)U_j'}}\\
&=\max_{h G^o \in G/G^o}\sum_{j=1}^r \max_{1 \leq \ell_j \leq m_j} \lim_{n \to \infty}  \frac{\beta_j}{\mathfrak{r}(\underline{\iii},n)} \log \|\rho_j(g_{\underline{\iii}|_{\mathfrak{r}(\underline{\iii},n)}})\|_{\frac{\rho_j(h)U_j^{\ell_j}}{\rho_j(h)U_j^{\ell_j-1}}} \\
&=\max_{\mathfrak{l}=(\ell_j)_{j=1}^r \in \mathcal{L}} \lim_{n \to \infty}\frac{1}{\mathfrak{r}(\underline{\iii},n)} \max_{hG^o \in G/G^o}\sum_{j=1}^r \beta_j \log \|\rho_j(g_{\underline{\iii}|_{\mathfrak{r}(\underline{\iii},n)}})\|_{\frac{\rho_j(h)U_j^{\ell_j}}{\rho_j(h)U_j^{\ell_j-1}}} \\
&=\max_{\mathfrak{l}\in \mathcal{L}} \lim_{n \to \infty}\frac{1}{\mathfrak{r}(\underline{\iii},n)} \log \Psi_{\mathbf{U}_{\mathfrak{l}},\mathbf{b}}(\underline{\iii}|_{\mathfrak{r}(\underline{\iii},n)})=\max_{\mathfrak{l}\in \mathcal{L}} \lim_{n \to \infty}\frac{1}{n} \log \Psi_{\mathbf{U}_{\mathfrak{l}},\mathbf{b}}(\underline{\iii}|_n)\end{align*}
which is \eqref{eq:quod-est-demonstrandum}. We deduce \eqref{eq:max-L}, and we have proved Theorem \ref{th:main-tech}\ref{it:max-over-composition-factors}.

\subsection{Proof of Theorem \ref{th:main-tech}\ref{it:psi-mixing-tech}}
We first claim that there exists an integer $p \geq 1$ which divides $[G \colon G^o]$ and has the following property: if $H$ denotes the Zariski closure in $\GL(W)$ of the semigroup $\{g_\iii \colon p\text{ divides }|\iii|\}$, then $H$ is a finite-index normal subgroup of $G$ and satisfies $\bigcup_{|\iii|=pn} g_\iii H^o=H$ for all large enough $n \geq 1$. To prove this we will use the below modified statement of \cite[Lemma 3.11]{breuSert}. For subsets $A,B$ of an ambient group $G$ and an integer $n \geq 1$, we denote $AB:=\{ab: a \in A, b \in B\}$, $A^{-1}:=\{a^{-1}: a \in A\}$, $A^n:=\{a_1\cdots a_n:a_i \in G\}$, and $A^{-n}=(A^{-1})^n$.
\begin{lemma}\label{le:breuillard-sert-cyclic}
Let $F$ be a finite group and $E\subseteq F$ a generating set. Then there exist an integer $p \geq 1$ and a normal subgroup $N\unlhd F$ \label{notation:normal} such that $F/N$ is cyclic of order $p$ and $E^{np}=N$ for all large enough $n$. 
\end{lemma}
\begin{proof}
The increasing sequence of finite sets $E^{-n} E^n$ must be constant for all $n \geq m$, say. Define $N:=E^{-m}E^m$ and observe that $$NN= E^{-m}E^mE^{-m}E^m \subseteq (E^{(|F|-1)m}E^m)^{-1}E^{(|F|-1)m}E^m=E^{-m}E^m=N$$ so that $N$ is a nonempty subsemigroup of the finite group $F$, hence a group. Clearly $N$ is fixed under conjugation by all elements of the generating set $E$, hence $N \unlhd F$. 
If $f\in E$ is arbitrary then $\bigcup_{k \geq 1}f^kN  \supseteq\bigcup_{k \geq 1} E^k=F$ demonstrating that $F/N$ is cyclic; let $p$ denote its order. Choose $q \geq 1$ such that $E^{pq}$ contains the identity and note that the sequence of sets $E^{npq}$ increases to the semigroup generated by $E^{pq}$, which is the group generated by $E^{pq}$, which contains $E^{npq}E^{-npq}$ for all $n\geq 1$ and hence contains $N$. Since obviously $E^p\subseteq N$ we have shown that $E^{np}=N$ for some $n$, hence for all large enough $n$.
\end{proof}
To prove the claim, apply the lemma with $F:=G/G^o$ and $E:=\{g_i G^o  \colon i \in \I\}$. We observe that the group $H$ defined above equals $\bigcup_{ g_\iii G^o \in N}g_\iii G^o $. Indeed, obviously $H$ is contained in this set since the latter is Zariski closed and contains every $g_\iii$ such that $|\iii|$ is a large enough multiple of $p$; on the other hand $H$ contains $G^o$ since it has finite index in $G$, hence contains  $\bigcup_{p\text{ divides }|\iii|}g_\iii G^o \supseteq \bigcup_{g_\iii G^o  \in N}g_\iii G^o $. Obviously also $H^o=G^o$. We conclude that $\bigcup_{|\iii|=np} g_\iii H^o =H$ for all large enough $n$ and that $G/ H\simeq F/ N$ is cyclic of order $p$, where in particular $p$ divides $[G \colon G^o]$.

Choose cosets $h_1 H^o,\ldots,h_p H^o \in G/H^o$ whose images $h_1H,\ldots,h_pH$ in $G/H$ are all distinct, and for each $h_t H^o$ define  $\mathbf{U}_{h_t H^o}:=(\rho_j(h_t)U_j/\rho_j(h_t)U_j')_{j=1}^r \in \mathfrak{U}_S$. Consider the functions $\hat\Psi_{\mathbf{U}_{h_t H^o},\mathbf{b}} \colon \Gamma_{\I^p} \to (0,\infty)$ defined by 
\[\hat\Psi_{\mathbf{U}_{h_t H^o}, \mathbf{b}}(\jjj):=\max_{h' H^o \in H/H^o} \left(\prod_{j=1}^r \left\|\rho_j(g_\jjj)\right\|_{\frac{\rho_j(h'h_t)U_j}{\rho_j(h'h_t)U_j'} \to \frac{\rho_j(g_\jjj 
h'h_t)U_j}{\rho_j(g_\jjj h'h_t)U_j'}}^{\beta_j}\right)\]
defined for words $\jjj$ over the alphabet $\I^p$ (which we may naturally identify with words over $\I$ whose length is divisible by $p$). We apply Theorem \ref{th:main-tech}\ref{it:potential-basics} to the analysis of this potential, using the generators $(g_\iii)_{\iii \in \I^p}$ in place of the generators $(g_i)_{i \in \I}$ and the group $H$ in place of the group $G$, but leaving the other parameters unchanged. Since $H^o=G^o$ the $\R[H^o]$-submodules of each $V_j$ are precisely the $\R[G^o]$-submodules of the same $V_j$, so the sets $\mathfrak{U}$ and $\mathfrak{U}_S$ and the integers $n_j$ are unaffected by the change of parameters. In view of the fact that $\bigcup_{|\iii|=pn} g_\iii H^o=H$ for some $n \geq 1$ we deduce from Theorem \ref{th:main-tech}\ref{it:potential-basics} that each $\hat\Psi_{\mathbf{U}_{h_tH^o},\mathbf{b}}$ is a strongly quasi-multiplicative potential, and hence by Theorem \ref{th:properties-of-potentials}\ref{it:strong-quasi} each has a unique equilibrium state in $\mathcal{M}_\sigma((\I^p)^\N)$ which is ergodic and $\psi$-mixing. 

We may also naturally identify $\Psi_{\mathbf{U},\mathbf{b}}$ with a potential $\hat\Psi_{\mathbf{U},\mathbf{b}} \colon \Gamma_{\I^p} \to (0,\infty)$ which we define by restricting its domain to those words over $\I$ whose length is divisible by $p$ and then identifying these words with elements of $\Gamma_{\I^p}$ in the obvious fashion. The potential  $\hat\Psi_{\mathbf{U},\mathbf{b}}$ obviously satisfies $\hat\Psi_{\mathbf{U},\mathbf{b}} =\max_{t=1,\ldots,p} \hat\Psi_{\mathbf{U}_{h_tH^o},\mathbf{b}}$, so by a straightforward analysis using the subadditive ergodic theorem in the same manner as in the beginning of the previous section, the ergodic equilibrium states of $\hat\Psi_{\mathbf{U},\mathbf{b}}$ in $\mathcal{M}_\sigma((\I^p)^\N)$ are a nonempty subset of the set of equilibrium states of the potentials $\hat\Psi_{\mathbf{U}_{h_tH^o},\mathbf{b}}$. We now note that the equilibrium state $\mu \in \mathcal{M}_\sigma(\I^\N)$ of $\Psi_{\mathbf{U},\mathbf{b}}$ may be naturally identified with an invariant (but not necessarily ergodic) measure on $(\I^p)^\N$ which is an equilibrium state of $\hat\Psi_{\mathbf{U},\mathbf{b}}$ and hence is a convex combination of the latter's ergodic equilibrium states. We conclude from this analysis that the equilibrium state $\mu \in \mathcal{M}_\sigma(\I^\N)$ is necessarily a convex combination of not more than $\prod_{1 \leq j \leq r \colon \beta_j \neq 0}n_j$ distinct ergodic $\sigma^p$-invariant measures $\nu$ on $\I^\N$, each of which corresponds to the unique equilibrium state of one of the potentials $\hat{\Psi}_{\mathbf{U}_{h_tH^o},\mathbf{b}}$ and is therefore $\psi$-mixing with respect to $\sigma^p$ in the sense that
\[\lim_{n \to \infty} \sup_{\substack{\iii,\jjj \in \Gamma_\I\\ p\text{ divides }|\iii|, |\jjj|}} \left|\frac{\nu([\iii]\cap \sigma^{-|\iii|-np}[\jjj])}{\nu([\iii])\nu([\jjj])}-1\right|=0.\]
Choose arbitrarily one of these ergodic $\sigma^p$-invariant measures on $\I^\N$, which we denote hereafter by $\nu$. Let $q\geq 1$ denote the smallest integer such that $\sigma^q_*\nu=\nu$ and observe that $q$ is necessarily a factor of $p$, hence in particular is a factor of $[G\colon G^o]$. The measure $\frac{1}{q}\sum_{j=0}^{q-1} \sigma^j_*\nu$ is a $\sigma$-invariant Borel probability measure on $\I^\N$ which is absolutely continuous with respect to the ergodic $\sigma$-invariant measure $\mu$ and hence must be equal to $\mu$. Since this measure has precisely $q$ distinct ergodic components with respect to $\sigma^p$ we deduce that necessarily $q \leq \prod_{1 \leq j \leq r \colon \beta_j \neq 0}n_j$.

To complete the proof it remains only show that $\nu$ is $\psi$-mixing with respect to $\sigma^q$, which yields to a careful direct analysis as follows. Given $\varepsilon>0$, choose $N_\varepsilon$ large enough that if $n \geq N_\varepsilon$ then
\begin{equation}
    \label{eq:placeholder}
\left|\nu([\iii] \cap \sigma^{-|\iii|-np}[\jjj]) - \nu([\iii])\nu([\jjj])\right| <\varepsilon \nu([\iii])\nu([\jjj])\end{equation}
for all words $\iii,\jjj \in \Gamma_\I$ whose length is divisible by $p$. Now suppose that $\iii', \jjj'$ are arbitrary words of length divisible by $q$. Define $mq:=|\iii'|$, let $k$ be an arbitrary integer in the range $0 \leq k< p/q$, and let $\ell \geq 0$ be the smallest integer such that $p|(m+\ell)q$. The sets $[\iii']$ and $\sigma^{-p-(k-\ell)q}[\jjj']$ can respectively be written as the union of not more than $|\I|^{p}$ and $|\I|^{2p}$ cylinders of length divisible by $m$, with the cylinders used to obtain $[\iii']$ each having length precisely $(m+\ell)q$. It follows that for each $n \geq N_\varepsilon$
\begin{align*}&\left|\nu([\iii'] \cap \sigma^{-(m+\ell)q-np-(p-(k-\ell)q)}[\jjj']) - \nu([\iii'])\nu\left(\sigma^{-(p+(k-\ell)q)}[\jjj']\right)\right|\\
&<\varepsilon\nu([\iii'])\nu\left(\sigma^{-(p+(k-\ell)q)}[\jjj']\right)\end{align*}
by summing not more than $|\I|^{3p}$ appropriate instances of \eqref{eq:placeholder}, so in particular
\begin{align*}{\lefteqn{\left|\nu\left([\iii'] \cap \sigma^{-|\iii'|-(n+1)p-kq}[\jjj']\right) - \nu([\iii'])\nu([\jjj'])\right|}}&\\
&=\left|\nu\left([\iii'] \cap \sigma^{-mq-(n+1)p-kq}[\jjj']\right) - \nu([\iii'])\nu\left(\sigma^{-(p+(k-\ell)q)}[\jjj']\right)\right| \\
&=\left|\nu\left([\iii'] \cap \sigma^{-(m+\ell)q-np-(p+(k-\ell)q)}[\jjj']\right) - \nu([\iii'])\nu\left(\sigma^{-(p+(k-\ell)q)}[\jjj']\right)\right| \\
&<\varepsilon\nu([\iii'])\nu\left(\sigma^{-(p+(k-\ell)q)}[\jjj']\right)=\varepsilon\nu([\iii'])\nu([\jjj'])\end{align*}
where we have used the $\sigma^q$-invariance of $\nu$. We have shown that for arbitrary $n'=(n+1)(p/q)+k \geq (N_\varepsilon+2)(p/q)>(N_\varepsilon +1)(p/q)+k$,
\[\sup_{\substack{\iii',\jjj' \in \Gamma_\I\\ q\text{ divides }|\iii'|, |\jjj'|}}  \left|\frac{\nu([\iii']\cap \sigma^{-|\iii'|-n'q}[\jjj'])}{\nu([\iii'])\nu([\jjj'])}-1\right|\leq \varepsilon,\]
and since $\varepsilon>0$ was arbitrary, this gives $\psi$-mixing of $\nu$ with respect to $q$. The proof of Theorem \ref{th:main-tech}\ref{it:psi-mixing-tech} is complete.

\subsection{Proof of Theorem \ref{th:main-tech}\ref{it:q-pressure}}
For each $j=1,\ldots,r$ fix a nonzero linear map $Q_j \in \End(V_j)$ and let $Z_j$ \label{notation:Zj} be the maximal $\R[G^o]$-submodule of $V_j$ which is a subset of $\ker Q_j$. Define $\mathbf{Z}:=(V_j/Z_j)_{j=1}^r$. We require two lemmas:
\begin{lemma}\label{le:pupper}
There exists $K>0$ such that for every $\iii \in \Gamma_\I$ and $\mathbf{b}=(\beta_j)_{j=1}^r \in [0,\infty)^r$,
\[\prod_{j=1}^r \left\|Q_j\rho_j(g_\iii)\right\|^{\beta_j} \leq K^{\sum_{j=1}^r \beta_j}\Psi_{\mathbf{Z},\mathbf{b}}(\iii).\]
\end{lemma}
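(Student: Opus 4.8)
\textit{Proof sketch.} The plan is to use that $Q_j$ annihilates $Z_j$, so it descends to a bounded linear map $\bar Q_j \colon V_j/Z_j \to V_j$ with $Q_j = \bar Q_j \circ \pi_j$, where $\pi_j \colon V_j \to V_j/Z_j$ is the canonical projection; then the only real work is to recognise the growth of $\rho_j(g_\iii)$ ``measured through $\pi_j$'' as one of the factors already present inside the maximum defining $\Psi_{\mathbf{Z},\mathbf{b}}$. Concretely, I would first fix a norm on each $V_j$, equip every quotient $V_j/Y$ with the associated quotient norm, and set $K:=\max\{1,\max_{1\leq j\leq r}\|\bar Q_j\|\}$, a quantity depending only on the $Q_j$ and on the chosen norms, as the statement requires.

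Next, given $\iii\in\Gamma_\I$, I would choose $h\in G$ with $g_\iii h\in G^o$ --- for instance $h=g_\iii^{-1}$, using that $G$ is a group. Since $Z_j$ is $G^o$-invariant and $\rho_j$ has invertible image, $\rho_j(\gamma)Z_j=Z_j$ for every $\gamma\in G^o$; applying this with $\gamma=g_\iii h$ gives $\rho_j(g_\iii h)Z_j=Z_j$, while $\rho_j(h)Z_j=\rho_j(g_\iii)^{-1}Z_j$. Hence $\rho_j(g_\iii)$ carries the subspace $\rho_j(h)Z_j$ isomorphically onto $Z_j$ and so induces an isomorphism $\widetilde{\rho_j(g_\iii)}\colon V_j/\rho_j(h)Z_j\to V_j/Z_j$ whose operator norm is exactly the factor $\|\rho_j(g_\iii)\|_{\frac{\rho_j(h)V_j}{\rho_j(h)Z_j}\to\frac{\rho_j(g_\iii h)V_j}{\rho_j(g_\iii h)Z_j}}$ occurring in the coset-$G^oh$ term of $\Psi_{\mathbf{Z},\mathbf{b}}(\iii)$.

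With this identification, for a unit vector $v\in V_j$ one has $Q_j\rho_j(g_\iii)v=\bar Q_j\big(\widetilde{\rho_j(g_\iii)}(\pi_j^h v)\big)$, where $\pi_j^h\colon V_j\to V_j/\rho_j(h)Z_j$ is the quotient map, and I would estimate $\|Q_j\rho_j(g_\iii)v\|\leq \|\bar Q_j\|\cdot\|\widetilde{\rho_j(g_\iii)}\|\cdot\|\pi_j^h v\|\leq \|\bar Q_j\|\cdot\|\widetilde{\rho_j(g_\iii)}\|$, using only that a quotient norm never exceeds the ambient norm. Taking the supremum over $v$ yields $\|Q_j\rho_j(g_\iii)\|\leq\|\bar Q_j\|\cdot\|\widetilde{\rho_j(g_\iii)}\|$ for every $j$; raising to the power $\beta_j$, multiplying over $j$, bounding $\prod_j\|\bar Q_j\|^{\beta_j}\leq K^{\sum_j\beta_j}$, and recognising $\prod_j\|\widetilde{\rho_j(g_\iii)}\|^{\beta_j}$ as the coset-$G^oh$ term of $\Psi_{\mathbf{Z},\mathbf{b}}(\iii)$ --- which is therefore $\leq\Psi_{\mathbf{Z},\mathbf{b}}(\iii)$ --- gives the asserted inequality.

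The argument is essentially bookkeeping, and I do not expect a serious obstacle; the one point needing care is the choice of coset $G^oh$. One wants the ``moving'' target subspace $\rho_j(g_\iii h)Z_j$ to collapse back onto the fixed subspace $Z_j$ through which every $Q_j$ factors, since otherwise the target quotient appearing in $\Psi_{\mathbf{Z},\mathbf{b}}$ would be $V_j/\rho_j(g_\iii)Z_j$ and the factorisation $Q_j=\bar Q_j\circ\pi_j$ could not be invoked. Taking $h=g_\iii^{-1}$ achieves this simultaneously for all $j$; alternatively one could observe that $\rho_j(g_\iii)^{-1}Z_j$ ranges over the finite family of $\R[G^o]$-submodules $\{\rho_j(h_t)Z_j\colon 1\leq t\leq[G:G^o]\}$ obtained from coset representatives, but passing directly through $h=g_\iii^{-1}$ avoids even invoking this finiteness.
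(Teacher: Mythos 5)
Your argument is correct, and it reaches the conclusion by a slightly different and in fact cleaner route than the paper. The paper splits into two cases: for $\iii$ with $g_\iii \in G^o$ it uses the identity $Q_j\rho_j(g_\iii)=Q_j(\rho_j(g_\iii)+L_j)$ for any $L_j$ with $L_jV_j\subseteq Z_j$ to bound $\|Q_j\rho_j(g_\iii)\|\leq \|Q_j\|\cdot\|\rho_j(g_\iii)\|_{V_j/Z_j}$, i.e.\ it compares against the term of $\Psi_{\mathbf{Z},\mathbf{b}}(\iii)$ corresponding to the trivial coset; for general $\iii$ it then appends a word $\kkk$ from a fixed finite set $F$ with $\bigcup_{\kkk\in F}G^og_\kkk=G$ so that $g_{\iii\kkk}\in G^o$, and pays two further multiplicative constants (one to undo $\rho_j(g_\kkk)^{-1}$, one to pass from $\Psi_{\mathbf{Z},\mathbf{b}}(\iii\kkk)$ back to $\Psi_{\mathbf{Z},\mathbf{b}}(\iii)$ via submultiplicativity). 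You instead exploit the fact that the maximum defining $\Psi_{\mathbf{Z},\mathbf{b}}$ ranges over \emph{all} cosets $G^oh\in G^o\backslash G$ and select $h=g_\iii^{-1}$ (legitimate, since $G$ is a group containing $g_\iii$, and the term is independent of the representative because $G^o$ is normal and $Z_j$ is $G^o$-invariant); this makes the target quotient equal to $V_j/Z_j$ for every $\iii$ simultaneously, so the factorisation $Q_j=\bar Q_j\circ\pi_j$ applies in one step with the single constant $K=\max_j\max\{1,\|\bar Q_j\|\}$ (and in fact $\|\bar Q_j\|\leq\|Q_j\|$, since $\|Q_jv\|=\|Q_j(v+z)\|\leq\|Q_j\|\,\|v+z\|$ for all $z\in Z_j$). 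Both proofs rest on the same two facts — that $Q_j$ annihilates $Z_j$ and that quotient norms do not exceed ambient norms — so the difference is organisational rather than conceptual, but your version avoids the bridging set $F$ entirely and yields a marginally sharper constant.
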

\begin{proof}
Define $C_1:=\max_j \|Q_j\|$. If $\iii \in \Gamma_\I^o$  then clearly
\begin{align*}\prod_{j=1}^r \|Q_j\rho_j(g_\iii)\|^{\beta_j} &=\prod_{j=1}^r \inf_{\substack{L_j \in \End(V_j)\\ L_jV_j \subseteq Z_j}} \|Q_j(\rho_j(g_\iii)+L_j)\|^{\beta_j}\\
&\leq \prod_{j=1}^r \left(\|Q_j\|^{\beta_j} \cdot \inf_{\substack{L_j \in \End(V_j)\\ L_jV_j \subseteq Z_j}}\|\rho_j(g_\iii)+L_j\|^{\beta_j}\right)\\
&\leq C_1^{\sum_{j=1}^r\beta_j} \prod_{j=1}^r\inf_{\substack{L_j \in \End(V_j)\\ L_jV_j \subseteq Z_j}}\|\rho_j(g_\iii)+L_j\|^{\beta_j}\\
&= C_1^{\sum_{j=1}^r\beta_j} \prod_{j=1}^r\|\rho_j(g_\iii)\|^{\beta_j}_{V_j/Z_j}\leq C_1^{\sum_{j=1}^r\beta_j} \Psi_{\mathbf{Z}, \mathbf{b}}(\iii).\end{align*}
Now let $F\subset \Gamma_\I$ be a finite set such that $\bigcup_{\kkk \in F} g_\kkk G^o  = G$, and define 
\[C_2:=\max_{\kkk \in F}\max_{1 \leq j \leq r} \max\{\|\rho_j(g_\kkk)\|, \|\rho_j(g_\kkk)^{-1}\|\}.\]
Given any $\iii \in \Gamma_\I$, choose $\kkk \in F$ such that $g_{\iii\kkk} \in G^o$ and observe that
\begin{align*}\prod_{j=1}^r\|Q_j \rho_j(g_\iii)\|^{\beta_j}&= \prod_{j=1}^r\|Q_j \rho_j(g_{\iii\kkk} )\rho_j(g_{\kkk})^{-1}\|^{\beta_j}  \leq C_2^{\sum_{j=1}^r \beta_j}\prod_{j=1}^r\|Q_j \rho_j(g_{\iii\kkk} )\|^{\beta_j}   \\
& \leq \left(C_1C_2\right)^{\sum_{j=1}^r \beta_j}\Psi_{\mathbf{Z},\mathbf{b}}(\iii \kkk)\leq \left(C_1C_2^2\right)^{\sum_{j=1}^r \beta_j} \Psi_{\mathbf{Z},\mathbf{b}}(\iii)\end{align*}
using the previous inequality. The lemma is proved.
\end{proof}

\begin{lemma}\label{le:plower}
There exist a finite set $F\subset \Gamma_\I$ and a real number $\kappa>0$ such that for every $\mathbf{b}=(\beta_j)_{j=1}^r \in [0,\infty)^r$ and $\iii \in \Gamma_\I$,
\[\max_{\kkk \in F} \prod_{j=1}^r \|Q_j \rho_j(g_{\kkk\iii})\|^{\beta_j}\geq \kappa^{\sum_{j=1}^r \beta_j} \Psi_{\mathbf{Z},\mathbf{b}}(\iii).\]
\end{lemma}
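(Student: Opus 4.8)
The plan is to deduce the inequality from a cleaner \emph{special case}, and then recover the general statement by conjugating with words from a fixed finite set $E\subset\Gamma_\I$ with $\bigcup_{\kkk\in E}G^og_\kkk=G$ (as in the proof of Lemma~\ref{le:pupper}). Two structural observations will be used throughout. First, since $Z_j$ is an $\R[G^o]$-submodule contained in $\ker Q_j$, the map $Q_j$ factors as $Q_j=\widetilde Q_j\circ\pi_j$, where $\pi_j\colon V_j\to V_j/Z_j$ is the quotient map and $\widetilde Q_j\in\End(V_j/Z_j,Q_jV_j)$; writing $\bar\rho_j\colon G^o\to\GL(V_j/Z_j)$ for the induced representation, the maximality in the definition of $Z_j$ forces $\ker\widetilde Q_j$ to contain no nonzero $\R[G^o]$-submodule. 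Second, $\{g_\iii\colon\iii\in\Gamma^o_\I\}$ is Zariski-dense in the irreducible group $G^o$, a fact already used implicitly in the proof of part~\eqref{it:potential-basics}. Also, since $\rho_j(h)V_j=V_j$, we have $\Psi_{\mathbf{Z},\mathbf{b}}(\iii)=\max_{G^oh}\prod_{j=1}^r\|\rho_j(g_\iii)\|^{\beta_j}_{V_j/\rho_j(h)Z_j\to V_j/\rho_j(g_\iii h)Z_j}$, and indices $j$ with $\beta_j=0$ contribute a factor $1$ to both sides and may be discarded.

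\textbf{Simultaneous bridging.} The heart of the argument is the claim that there exist a finite set $F_0\subset\Gamma^o_\I$ and $\kappa_0>0$ such that \emph{for every} tuple $(\hat y_j)_{j=1}^r$ of nonzero vectors $\hat y_j\in V_j/Z_j$ there is a \emph{single} $\mathbf{m}\in F_0$ with $\|\widetilde Q_j\bar\rho_j(g_\mathbf{m})\hat y_j\|\geq\kappa_0\|\hat y_j\|$ for all $j$ at once. I would prove this as follows. For a fixed tuple and each $j$, the set $\{g\in G^o\colon\bar\rho_j(g)\hat y_j\in\ker\widetilde Q_j\}$ is Zariski-closed, being the preimage of a linear subspace under the polynomial map $g\mapsto\bar\rho_j(g)\hat y_j$; and it is a \emph{proper} subset of $G^o$, since otherwise $\hat y_j$ would lie in $\bigcap_{g\in G^o}\bar\rho_j(g)^{-1}\ker\widetilde Q_j$, which is an $\R[G^o]$-submodule of $V_j/Z_j$ contained in $\ker\widetilde Q_j$, hence trivial. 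Because $G^o$ is irreducible, the union over $j=1,\dots,r$ of these $r$ proper closed sets is again proper, so its complement is a nonempty Zariski-open subset of $G^o$ and therefore contains $g_\mathbf{m}$ for some $\mathbf{m}\in\Gamma^o_\I$. Hence the open sets $O_\mathbf{m}:=\{(\hat y_j)_j\in\prod_j\mathbf{S}_{V_j/Z_j}\colon\bar\rho_j(g_\mathbf{m})\hat y_j\notin\ker\widetilde Q_j\ \forall j\}$, $\mathbf{m}\in\Gamma^o_\I$, cover the compact space $\prod_j\mathbf{S}_{V_j/Z_j}$; passing to a finite subcover $F_0$ and noting that $(\hat y_j)_j\mapsto\max_{\mathbf{m}\in F_0}\min_j\|\widetilde Q_j\bar\rho_j(g_\mathbf{m})\hat y_j\|$ is continuous and strictly positive there produces $\kappa_0$.

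\textbf{From the bridging claim to the lemma.} For the special case, given $\mathbf{m}'\in\Gamma^o_\I$ I would choose for each $j$ a unit vector $\bar v_j\in V_j/Z_j$ realising $\|\rho_j(g_{\mathbf{m}'})\|_{V_j/Z_j}$, set $\hat y_j:=\bar\rho_j(g_{\mathbf{m}'})\bar v_j$, and apply the bridging claim; lifting $\bar v_j$ to $V_j$ and using $Q_j\rho_j(g_\mathbf{m}g_{\mathbf{m}'})=\widetilde Q_j\bar\rho_j(g_\mathbf{m})\bar\rho_j(g_{\mathbf{m}'})\pi_j$ yields $\|Q_j\rho_j(g_\mathbf{m}g_{\mathbf{m}'})\|\geq\kappa_0\|\rho_j(g_{\mathbf{m}'})\|_{V_j/Z_j}$ for every $j$, hence
\[\max_{\mathbf{m}\in F_0}\prod_{j=1}^r\|Q_j\rho_j(g_\mathbf{m}g_{\mathbf{m}'})\|^{\beta_j}\geq\kappa_0^{\sum_j\beta_j}\prod_{j=1}^r\|\rho_j(g_{\mathbf{m}'})\|_{V_j/Z_j}^{\beta_j}.\]
For the general case, given $\iii\in\Gamma_\I$ I would let $G^oh$ attain the maximum defining $\Psi_{\mathbf{Z},\mathbf{b}}(\iii)$, write $h=g_{\kkk_0}$ with $\kkk_0\in E$, pick $\kkk_1\in E$ with $g_{\kkk_1}g_\iii g_{\kkk_0}\in G^o$ (possible since $E$ meets every coset and $G^o\lhd G$), and put $\mathbf{m}':=\kkk_1\iii\kkk_0\in\Gamma^o_\I$. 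Since $\rho_j(g_{\kkk_0})$ and $\rho_j(g_{\kkk_1})$ induce isomorphisms $V_j/Z_j\to V_j/\rho_j(g_{\kkk_0})Z_j$ and $V_j/\rho_j(g_\iii g_{\kkk_0})Z_j\to V_j/Z_j$ with norms and inverse norms bounded uniformly in terms of $E$, the submultiplicativity of induced-map norms gives $\prod_j\|\rho_j(g_{\mathbf{m}'})\|_{V_j/Z_j}^{\beta_j}\geq C^{-\sum_j\beta_j}\Psi_{\mathbf{Z},\mathbf{b}}(\iii)$ for a uniform $C>0$. Applying the special case to $\mathbf{m}'$, then stripping $g_{\kkk_0}$ off the right via $\|Q_j\rho_j(g_\mathbf{m}g_{\kkk_1}g_\iii)\|\geq\|\rho_j(g_{\kkk_0})\|^{-1}\|Q_j\rho_j(g_\mathbf{m}g_{\kkk_1}g_\iii g_{\kkk_0})\|$, I obtain the lemma with $F:=\{\mathbf{m}\kkk_1\colon\mathbf{m}\in F_0,\ \kkk_1\in E\}$ and $\kappa$ built from $\kappa_0$, $C$ and $\max_{\kkk\in E,\,j}\|\rho_j(g_\kkk)\|$.

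\textbf{Main obstacle.} I expect the bookkeeping in the general reduction to be the delicate point: one must relate $\Psi_{\mathbf{Z},\mathbf{b}}(\iii)$, which encodes the cosets $G^oh$ through the varying subspaces $\rho_j(h)Z_j$, to plain endomorphism norms on $V_j/Z_j$, and check that every distortion constant produced by the finitely many cosets and the chosen factorisations is uniform in $\iii$ and $\mathbf{b}$. By contrast, the simultaneity over $j$ in the bridging step — which one might have expected to require a proximal reduction and a ping-pong argument in the style of Proposition~\ref{pr:long-bridge} — comes essentially for free from the irreducibility of $G^o$ as a variety together with the Zariski density of $\{g_\iii\colon\iii\in\Gamma^o_\I\}$.
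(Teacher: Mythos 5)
Your proposal is correct and follows essentially the same route as the paper's proof: the same Zariski‑density/compactness "bridging" step on $\prod_j\mathbf{S}_{V_j/Z_j}$ (with properness of the bad set coming from the maximality of $Z_j$ and irreducibility of $G^o$), applied to vectors realising $\|\rho_j(\cdot)\|_{V_j/Z_j}$, followed by the same reduction of the general case to $\Gamma_\I^o$ via coset representatives and stripping a bounded factor off the right. The only differences are cosmetic (factoring $Q_j$ through the quotient rather than observing directly that $\|Q_j\rho_j(g)v_j\|$ is coset‑independent).
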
\begin{proof}
For each $j=1,\ldots,r$ let $\mathbf{S}_{V_j/Z_j}$ denote the unit sphere of the real vector space $V_j/Z_j$. We first claim that there exists a finite set $F_1 \subset \Gamma_\I^o$ such that the real number
\[\tau:=\min_{([v_j])_{j=1}^r \in \prod_{j=1}^r \mathbf{S}_{V_j/Z_j}} \max_{\kkk \in F_1} \min_{1 \leq j \leq r} \|Q_j\rho_j(g_\kkk)v_j\|\]
is positive. Here we observe that the expression $\|Q_j \rho_j(g_\kkk)v_j\|$ is well-defined with respect to the choice of coset representative $v_j \in [v_j]$, since if $v_j \in V_j$ and $z_j \in Z_j$ then $Q_j
\rho_j(g_\kkk) (v_j+z_j)=Q_j
\rho_j(g_\kkk) v_j$ using the fact that $\rho_j(g_\kkk)Z_j=Z_j\subseteq \ker Q_j$. The same reasoning demonstrates that the value of $\|Q_j \rho_j(g_\kkk)v_j\|$ depends continuously on $[v_j] \in \mathbf{S}_{V_j/Z_j}$ when the other parameters are fixed.

To prove the claim we adopt a variation of the strategy used to prove Proposition \ref{pr:long-bridge}. %and Lemma \ref{le:bridge-elementary}.  
We observe that it is sufficient to prove the above claim in the following reduced form: for every
 $([v_j])_{j=1}^r \in \prod_{j=1}^r \mathbf{S}_{V_j/Z_j}$ there exists $\kkk \in \Gamma_\I^o$ such that
 \[\min_{1 \leq j\leq r} \|Q_j \rho_j(g_\kkk)v_j\|\neq 0.\]
 To see that the reduced statement implies the claim, we 
 observe that if the former holds then the open sets
 \[O_\kkk:=\left\{[v_j]_{j=1}^r \in \prod_{j=1}^r \mathbf{S}_{V_j/Z_j} \colon \min_{1 \leq j\leq r} \|Q_j \rho_j(g_\kkk)v_j\|\neq 0\right\}\]
 for $\kkk \in \Gamma_\I^o$ form an open cover of $\prod_{j=1}^r \mathbf{S}_{V_j/Z_j}$, so choosing $F_1 \subset \Gamma_\I^o$ such that $(O_\kkk)_{\kkk \in F_1}$ is a finite subcover will then yield the positivity of $\tau$ by continuity and compactness. The reduced statement thus implies the claim, and we proceed to prove the former.

 Fix $([v_j])_{j=1}^r$ and a choice of coset representatives $v_1,\ldots,v_r$, and consider the sets
\[\mathcal{U}_j:=\left\{g \in G^o \colon Q_j\rho_j(g)v_j\neq 0\right\}.\]
Each $\mathcal{U}_j$ is obviously Zariski open, and each is also nonempty: if some $\mathcal{U}_j$ were empty, the $\R[G^o]$-linear span of $v_j+Z_j$ would be an $\R[G^o]$-module which strictly contains $Z_j$ and is a subset of $\ker Q_j$, contradicting the definition of $Z_j$. Since $G^o$ is Zariski connected, its nonempty Zariski-open subsets are Zariski dense, so each $\mathcal{U}_j$ is open and dense in $G^o$. In particular $\bigcap_{j=1}^r\mathcal{U}_j$ is a nonempty Zariski-open subset of $G^o$, so choosing any element of $\{\iii \in \Gamma_\I^o \colon g_\iii \in \bigcap_{j=1}^r \mathcal{U}_j\}$ yields   $\min_{1 \leq j\leq r} \|Q_j \rho_j(g_\iii)v_j\|\neq 0$ as required. We have proved the reduced form of the claim, and the existence of the finite set $F_1$ and positive real number $\tau$ follow. We fix such a set $F_1$ and constant $\tau>0$ for the remainder of the proof of the lemma.

We now claim that for every $\iii \in \Gamma_\I^o$ there exists $\kkk \in F_1$ such that
\[\prod_{j=1}^r \|Q_j \rho_j(g_\kkk g_\iii)\|^{\beta_j}  \geq \tau^{\sum_{j=1}^r \beta_j} \prod_{j=1}^r \|\rho_j(g_\iii)\|^{\beta_j}_{V_j/Z_j} \]
for every $\mathbf{b}=(\beta_j)_{j=1}^r \in [0,\infty)^r$. Indeed, given any $\iii \in \Gamma_\I^o$, for each $j=1,\ldots,r$ pick a unit vector $[u_j] \in V_j/Z_j$ such that $\|[\rho_j(g_\iii)u_j]\|_{V_j/Z_j} = \|\rho_j(g_\iii)\|_{V_j/Z_j}$, define a further unit vector by $[v_j]:=\|\rho_j(g_\iii)\|_{V_j/Z_j}^{-1}[\rho_j(g_\iii)u_j] \in V_j/Z_j$, and fix coset representatives $u_j, v_j \in V_j$ such that $\|v_j\|=\|[v_j]\|_{V_j/Z_j}=1$ and $\|u_j\|=\|[u_j]\|_{V_j/Z_j}=1$. Applying the previous claim, there exists $\kkk \in F_1$ such that for every $\mathbf{b}=(\beta_j)_{j=1}^r \in [0,\infty)^r$
\[\prod_{j=1}^r \|Q_j \rho_j(g_\kkk)v_j\|^{\beta_j} \geq \tau^{\sum_{j=1}^r \beta_j}\]
and therefore
\begin{align*}
\prod_{j=1}^r \|Q_j \rho_j(g_\kkk g_\iii)\|^{\beta_j} \geq \prod_{j=1}^r \|Q_j \rho_j(g_\kkk)\rho_j( g_\iii)u_j\|^{\beta_j}&= \prod_{j=1}^r \|\rho_j(g_\iii)\|^{\beta_j}_{V_j/Z_j} \|Q_j \rho_j(g_\kkk)v_j\|^{\beta_j} 
\\
&\geq \tau^{\sum_{j=1}^r \beta_j} \prod_{j=1}^r \|\rho_j(g_\iii)\|^{\beta_j}_{V_j/Z_j}  \end{align*}
as required to prove the claim. 

Now let $F_2 \subset \Gamma_\I$ be a finite set such that $\bigcup_{\jjj \in F_2} g_\jjj G^o =G$, and define \[F:=F_1F_2=\left\{\kkk\jjj \colon \kkk \in F_1 \text{ and }\jjj \in F_2\right\}\]
and
\[C:=\max_{1 \leq j \leq r} \max_{\kkk \in F_1 \cup F_2}\max\{\|\rho_j(g_\kkk)\|, \|\rho_j(g_\kkk)^{-1}\|\}.\]
Given $\mathbf{b}=(\beta_j)_{j=1}^r \in [0,\infty)^r$ and $\iii \in \Gamma_\I$, choose $\jjj_2\in F_2$ such that 
\[\Psi_{\mathbf{Z},\mathbf{b}}(\iii)=\prod_{j=1}^r \|\rho_j(g_\iii)\|^{\beta_j}_{\frac{V_j}{\rho_j\left(g_{\jjj_2}\right)Z_j} \to \frac{V_j}{\rho_j\left(g_{\iii\jjj_2}\right)Z_j} }\]
and choose $\jjj_1 \in F_2$ such that $g_{\jjj_1\iii \jjj_2} \in G^o$. Applying the previous claim to the word $\jjj_1\iii\jjj_2 \in \Gamma_\I^o$, we may choose $\kkk \in F_1$ such that 
\begin{align*}\prod_{j=1}^r \|Q_j \rho_j(g_\kkk g_{\jjj_1\iii\jjj_2})\|^{\beta_j} &\geq \tau^{\sum_{j=1}^r \beta_j} \prod_{j=1}^r \|\rho_j(g_{\jjj_1\iii\jjj_2})\|^{\beta_j}_{V_j/Z_j}\\
&\geq \left(C^{-2}\tau\right)^{\sum_{j=1}^r\beta_j}\prod_{j=1}^r \|\rho_j(g_\iii)\|^{\beta_j}_{\frac{V_j}{\rho_j(g_{\jjj})Z_j} \to \frac{V_j}{\rho_j(g_{\iii\jjj})Z_j}} \\
&= \left( C^{-2}\tau\right)^{\sum_{j=1}^r \beta_j}\Psi_{\mathbf{Z},\mathbf{b}}(\iii)\end{align*}
and hence clearly
\[\prod_{j=1}^r \|Q_j \rho_j(g_{\kkk\jjj_1\iii})\|^{\beta_j}\geq \left( C^{-3}\tau\right)^{\sum_{j=1}^r \beta_j}\Psi_{\mathbf{Z},\mathbf{b}}(\iii).\]
Since $\kkk\jjj_1 \in F$ the lemma is proved.
\end{proof}
We may now prove the assertions of Theorem \ref{th:main-tech}\ref{it:q-pressure}. We begin with \eqref{eq:q-potential-ratio}. Lemma \ref{le:pupper} directly yields the upper bound 
\[\sum_{|\iii|=n} \prod_{j=1}^r \|Q_j\rho_j(g_\iii)\|^{\beta_j} \leq K^{\sum_{j=1}^r \beta_j} \sum_{|\iii|=n}\Psi_{\mathbf{Z},\mathbf{b}}(\iii)\]
for every $n \geq 1$. Now let $F \subset \Gamma_\I$ and $\kappa>0$ be as given by Lemma \ref{le:plower}, define once more
\[C_1:=\max_{1 \leq j \leq r} \max_{\kkk \in F} \max\{\|\rho_j(g_\kkk)\|, \|\rho_j(g_\kkk)^{-1}\|\}\]
and let $m$ denote the maximum length of a word in $F$. 
Fix $n \geq 1$. By Lemma \ref{le:plower} we have
\begin{align*}\sum_{\ell=1}^{m} \sum_{|\iii|=n+\ell} \prod_{j=1}^r \|Q_j \rho_j(g_\iii)\|^{\beta_j}& \geq \sum_{\kkk \in F}\sum_{|\jjj|=n} \prod_{j=1}^r \|Q_j \rho_j(g_{\kkk\jjj})\|^{\beta_j}\\
&\geq \sum_{|\jjj|=n}\max_{\kkk \in F} \prod_{j=1}^r \|Q_j \rho_j(g_{\kkk}g_\jjj )\|^{\beta_j}\geq \kappa^{\sum_{j=1}^r \beta_j}\sum_{|\jjj|=n}\Psi_{\mathbf{Z},\mathbf{b}}(\jjj)\end{align*}
so in particular we may choose an integer $k$ in the range $1 \leq k \leq m$ such that 
\[\sum_{|\iii|=n+k} \prod_{j=1}^r \|Q_j \rho_j(g_\iii)\|^{\beta_j} \geq \frac{1}{m}\sum_{\ell=1}^{m} \sum_{|\iii|=n+\ell} \prod_{j=1}^r \|Q_j \rho_j(g_\iii)\|^{\beta_j}\geq \frac{\kappa^{\sum_{j=1}^r \beta_j}}{m} \sum_{|\iii|=n} \Psi_{\mathbf{Z},\mathbf{b}}(\iii)\]
and since clearly
\begin{align*}\sum_{|\iii|=n+k} \prod_{j=1}^r \|Q_j \rho_j(g_\iii)\|^{\beta_j} &\leq \left(\sum_{|\iii|=n}\sum_{|\jjj|=k} \prod_{j=1}^r \|Q_j \rho_j(g_\iii)\|^{\beta_j}\|\rho_j(g_\jjj)\|^{\beta_j}\right)\\
&\leq \left(|\I| \cdot C_1^{\sum_{j=1}^r \beta_j}\right)^{k}\sum_{|\iii|=n} \prod_{j=1}^r \|Q_j \rho_j(g_\iii)\|^{\beta_j}\end{align*}
we conclude that
\[\sum_{|\iii|=n} \prod_{j=1}^r \|Q_j\rho_j(g_\iii)\|^{\beta_j} \geq \frac{1}{m |\I|^m} \left(\frac{\kappa}{C_1^m}\right)^{\sum_{j=1}^r \beta_j}\sum_{|\iii|=n} \Psi_{\mathbf{Z},\mathbf{b}}(\iii)\]
for every $n \geq 1$ as required for the lower bound in \eqref{eq:q-potential-ratio}.  
Since the sequence $\log \sum_{|\iii|=n}\Psi_{\mathbf{Z},\mathbf{b}}(\iii)$ is subadditive as a consequence of the results of Theorem \ref{th:main-tech}\ref{it:potential-basics}, the existence and description of the limit in \eqref{eq:general-limit} follow immediately.

We now consider the ergodic theory of the functions $\underline{\iii}\mapsto \prod_{j=1}^r \|Q_j\rho_j(\underline{\iii}|_n)\|^{\beta_j}$. We begin by demonstrating that for every ergodic measure $\mu \in \mathcal{M}_\sigma(\I^\N)$, the limit
\begin{equation}\label{eq:exists-ae-limit} \lim_{n \to \infty} \frac{1}{n}\log \left(\frac{\prod_{j=1}^r \|Q_j \rho_j(g_{\underline{\iii}|_n})\|^{\beta_j}}{\mu([\underline{\iii}|_n])}\right)\end{equation}
exists $\mu$-almost-everywhere. By the Shannon-McMillan-Breiman theorem the sequence $\frac{1}{n}\log \mu([\underline{\iii}|_n])$ converges a.e.\, so it is sufficient to show that for each $j=1,\ldots,r$ the limit
\[\lim_{n \to \infty}\frac{1}{n} \log \|Q_j\rho_j(g_{\underline{
\iii}|_n})\|\]
also exists $\mu$-a.e. For this we will appeal to the Oseledets multiplicative ergodic theorem. Since in our work it is necessary for us to extend products on the right --- as $\rho_j(g_{i_1})\rho_j(g_{i_2})\cdots \rho_j(g_{i_n})$, rather than  $\rho_j(g_{i_n})\cdots \rho_j(g_{i_2})\rho_j(g_{i_1})$ --- where standard statements of Oseledets' theorem instead extend products by multiplication on the left, we will find it easier to perform the equivalent task of showing that the limit
\[\lim_{n \to \infty}\frac{1}{n} \log \|(Q_j\rho_j(g_{\underline{
\iii}|_n}))^*\|=\lim_{n \to \infty} \frac{1}{n} \log \|\rho_j(g_{i_n})^* \cdots \rho_j(g_{i_1})^*Q_j^*\|\]
exists $\mu$ a.e.\ for every $j=1,\ldots,r$.

To this end, fix $j$ and define a continuous function $\mathcal{A}_j \colon \I^\N \times \N \to \GL(V_j^*)$ by $\mathcal{A}_j(\underline{\iii},n):=\rho_j(g_{\underline{\iii}|_n})^*=\rho_j(g_{i_n})^*\cdots \rho_j(g_{i_1})^*$. The function $\mathcal{A}_j$ satisfies the left cocycle identity $\mathcal{A}_j(\underline{\iii},n+m) \equiv \mathcal{A}_j(\sigma^m \underline{\iii},n)\mathcal{A}_j(\underline{\iii},m)$. By the non-invertible formulation of the Oseledets multiplicative ergodic theorem \cite{oseledets} there exist a Borel set $\Lambda \subseteq \I^\N$ satisfying $\mu(\Lambda)=1$, a natural number $p$, real numbers $\lambda_1>\lambda_2>\cdots>\lambda_p$, and Borel measurable functions $\mathcal{W}_1,\ldots,\mathcal{W}_{p+1}$ from $\Lambda$ to the Grassmannian of $V_j^*$ such that for every $\underline{\iii} \in \Lambda$,\[V_j^*=\mathcal{W}_1(\underline{\iii})>\mathcal{W}_2(\underline{\iii})>\cdots >\mathcal{W}_p(\underline{\iii})>\mathcal{W}_{p+1}(\underline{\iii})=\{0\}\]
and for each $k=1,\ldots,p$
\[\lim_{n \to \infty} \frac{1}{n} \log \|\mathcal{A}_j(\underline{\iii},n)v\|=\lambda_k\]
for all  $v \in \mathcal{W}_k(\underline{\iii}) \setminus \mathcal{W}_{k+1}(\underline{\iii})$. Given $\underline{\iii} \in \Lambda$, let $\ell$ be the largest integer such that the image of $Q_j^*$ is contained in $\mathcal{W}_\ell(\underline{\iii})$. Since $Q_j$ is nonzero we have $\ell<p+1$. If  $u_1,\ldots,u_d$ is a basis for $V_j^*$ then clearly
\[\limsup_{n \to \infty} \frac{1}{n}\log \|\mathcal{A}_j(\underline{\iii},n)Q_j^*\| \leq \max_{1 \leq i \leq d} \limsup_{n \to \infty} \frac{1}{n}\log \left\|\mathcal{A}_j(\underline{\iii},n)Q_j^*u_i \right\| \leq \lambda_\ell \]
since each of the vectors $Q_j^*u_i$ is an element of $\mathcal{W}_\ell(\underline{\iii})$. On the other hand, 
by the maximality of $\ell$ there exists $u \in V_j^*$ such that $Q_j^*u \in \mathcal{W}_\ell(\underline{\iii})\setminus \mathcal{W}_{\ell+1}(\underline{\iii})$, whence
\[\liminf_{n\to\infty} \frac{1}{n}\log \|\mathcal{A}_j(\underline{\iii},n)Q_j^* \| \geq \lim_{n\to\infty} \frac{1}{n}\log \|\mathcal{A}_j(\underline{\iii},n)Q_j^* u\|=\lambda_\ell. \]
It follows that for this particular $\underline{\iii}$
\[\lim_{n \to \infty} \frac{1}{n}\log \|Q_j\rho_j(g_{\underline{\iii}|_n})\|=\lim_{n \to \infty} \frac{1}{n}\log \|\mathcal{A}_j(\underline{\iii},n)Q_j^*\|=\lambda_\ell\]
and since $\underline{\iii} \in \Lambda$ was arbitrary we have proved that the limit \eqref{eq:exists-ae-limit} exists $\mu$-a.e.\ as required.

It remains to consider the relationship of the limit \eqref{eq:exists-ae-limit} to the pressure of $\Psi_{\mathbf{Z},\mathbf{b}}$. If $\mu \in \mathcal{M}_\sigma(\I^\N)$ is ergodic, then using Lemma \ref{le:pupper}, the Shannon-McMillan-Breiman theorem, the subadditive ergodic theorem and Theorem \ref{th:properties-of-potentials}\ref{it:variational-principle},
\begin{align}\label{eq:esssup}{\lefteqn{{\ess\sup}_{\mu,\underline{\iii}}\lim_{n \to \infty} \frac{1}{n}\log \left(\frac{\prod_{j=1}^r \|Q_j \rho_j(g_{\underline{\iii}|_n})\|^{\beta_j}}{\mu([\underline{\iii}|_n])}\right)}} &\\\nonumber
&\leq {\ess\sup}_{\mu,\underline{\iii}} \lim_{n \to \infty} \frac{1}{n}\log \left(\frac{\Psi_{\mathbf{Z},\mathbf{b}}(\underline{\iii}|_n)}{\mu([\underline{\iii}|_n])}\right)\\\nonumber
&=h(\mu) + \lim_{n \to \infty}\frac{1}{n}\int_{\I^\N} \log \Psi_{\mathbf{Z},\mathbf{b}}(\underline{\jjj}|_n)d\mu(\underline{\jjj})\leq P(\Psi_{\mathbf{Z},\mathbf{b}}).\end{align}
Clearly  if the first expression is equal to the last, then all  expressions are equal and $\mu$ is an equilibrium state of $\Psi_{\mathbf{Z},\mathbf{b}}$ as required. To complete the proof of Theorem \ref{th:main-tech}\ref{it:q-pressure} we will show that if $\mu$ is an ergodic equilibrium state of  $\Psi_{\mathbf{Z},\mathbf{b}}$ then the first expression in \eqref{eq:esssup} must be equal to the pressure $P(\Psi_{\mathbf{Z},\mathbf{b}})$. To this end fix an ergodic equilibrium state $\mu$ of $\Psi_{\mathbf{Z},\mathbf{b}}$ and let $F \subset \Gamma_\I$ be as given by Lemma \ref{le:plower}. Using Lemma \ref{le:plower} we find that for $\mu$-a.e.\ $\underline{\iii}\in \I^\N$,
\begin{align*}\limsup_{n \to \infty}\frac{1}{n}\log \max_{\kkk \in F} \left(\frac{\prod_{j=1}^r \|Q_j \rho_j(g_\kkk g_{\underline{\iii}|_n})\|^{\beta_j}}{\mu([\underline{\iii}|_n])}\right) &\geq \lim_{n \to \infty} \frac{1}{n}\log \left(\frac{\Psi_{\mathbf{Z},\mathbf{b}}(\underline{\iii}|_n)}{\mu([\underline{\iii}|_n])}\right)=P(\Psi_{\mathbf{Z},\mathbf{b}}),\end{align*}
so in particular there exist $\kkk_0 \in F$ and a Borel set $\Omega\subseteq \I^\N$ such that $\mu(\Omega)>0$ and 
\begin{equation}\label{eq:limsup}\limsup_{n \to \infty}\frac{1}{n}\log  \left(\frac{\prod_{j=1}^r \|Q_j \rho_j(g_{\kkk_0} g_{\underline{\iii}|_n})\|^{\beta_j}}{\mu([\underline{\iii}|_n])}\right)\geq P(\Psi_{\mathbf{Z},\mathbf{b}})\end{equation}
for every $\underline{\iii} \in \Omega$. Now, by parts \ref{it:potential-basics} and \ref{it:max-over-composition-factors} of Theorem \ref{th:main-tech} together with Theorem \ref{th:properties-of-potentials}\ref{it:quasi-multiplicative}, there exist a potential $\Psi_{\mathbf{U},\mathbf{b}}$ with $\mathbf{U} \in \mathfrak{U}_S$ and $\mathbf{U}\preceq \mathbf{Z}$, and a constant $C_2\geq 1$, such that 
\[C_2^{-1}\leq \frac{e^{|\iii|P(\Psi_{\mathbf{U},\mathbf{b}})}\mu([\iii])}{ \Psi_{\mathbf{U},\mathbf{b}}(\iii)} \leq C_2\]
for every $\iii \in \Gamma_\I$. Consequently
\[\frac{\mu([\kkk_0\iii])}{\mu([\iii])} \leq C_2^2e^{-|\kkk_0|P(\Psi_{\mathbf{U},\mathbf{b}})} \frac{\Psi_{\mathbf{U},\mathbf{b}}(\kkk_0\iii)}{\Psi_{\mathbf{U},\mathbf{b}}(\iii)}\leq C_2^2e^{-|\kkk_0|P(\Psi_{\mathbf{U},\mathbf{b}})} \Psi_{\mathbf{U},\mathbf{b}}(\kkk_0)\]
and
\[\frac{\mu([\kkk_0\iii])}{\mu([\iii])} \geq C_2^{-2}e^{-|\kkk_0|P(\Psi_{\mathbf{U},\mathbf{b}})} \frac{\Psi_{\mathbf{U},\mathbf{b}}(\kkk_0\iii)}{\Psi_{\mathbf{U},\mathbf{b}}(\iii)}\geq C_1^{-\sum_{j=1}^r \beta_j} C_2^{-2}e^{-|\kkk_0|P(\Psi_{\mathbf{U},\mathbf{b}})}\]
for every $\iii \in \Gamma_\I$,
from  which we conclude that there exists $C_3\geq 1$ such that
\begin{equation}\label{eq:cylinder-distortion}
C_3^{-1}\mu([\iii])\leq \mu([\kkk_0\iii])=\mu\left([\kkk_0] \cap \sigma^{-|\kkk_0|}[\iii]\right) \leq C_3\mu([\iii])\end{equation}
for every $\iii \in \Gamma_\I$. It follows trivially that
\[C_3^{-1}\mu(Y)\leq \mu([\kkk_0] \cap \sigma^{-|\kkk_0|}Y) \leq C_3 \mu(Y)\]
whenever $Y \subseteq \I^\N$ is a finite union of cylinder sets, and by an approximation argument the same holds whenever $Y\subseteq \I^\N$ is Borel. The set $[\kkk_0]\cap \sigma^{-|\kkk_0|}\Omega$ thus has positive measure, and for every $\underline{\jjj}$ belonging to that set
\[\limsup_{n \to \infty}\frac{1}{n}\log  \left(\frac{\prod_{j=1}^r \|Q_j \rho_j( g_{\underline{\jjj}|_n})\|^{\beta_j}}{\mu([\underline{\jjj}|_n])}\right) \geq P(\Psi_{\mathbf{Z},\mathbf{b}})\]
using \eqref{eq:limsup}, where we have also used \eqref{eq:cylinder-distortion} to control the behaviour of the denominator.
Since the limit exists almost everywhere we conclude that 
\[{\ess\sup}_{\mu,\underline{\iii}}\lim_{n \to \infty} \frac{1}{n}\log \left(\frac{\prod_{j=1}^r \|Q_j \rho_j(g_{\underline{\iii}|_n})\|^{\beta_j}}{\mu([\underline{\iii}|_n])}\right) \geq P(\Psi_{\mathbf{Z},\mathbf{b}})\]
and this completes the proof of Theorem \ref{th:main-tech}\ref{it:q-pressure}.

\subsection{Proof of Theorem \ref{th:main-tech}\ref{it:exist-subvarieties}}
The proof rests on the following purely algebraic lemma:
\begin{lemma}\label{le:minimal-submodule}
Let $R$ be a ring, $M$ an $R$-module of finite length, and $X$ a simple $R$-module. Then there exists a submodule $N\leq M$ with the following property: for every submodule $M'$ of $M$, the quotient module $M/M'$ has no composition factors isomorphic to $X$ if and only if it satisfies $N \leq M'$.
\end{lemma}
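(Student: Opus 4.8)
The plan is to construct $N$ as the intersection of all submodules $M'\le M$ such that $M/M'$ has no composition factor isomorphic to $X$. Call this family $\mathcal{F}$, and set $N:=\bigcap_{M'\in\mathcal{F}}M'$. Since $M$ has finite length, $\mathcal{F}$ is a finite family and the intersection is attained by finitely many of its members; in particular $N\in\mathcal{F}$, i.e. $M/N$ itself has no composition factor isomorphic to $X$. This uses the elementary fact that if $M/M_1$ and $M/M_2$ have no composition factor isomorphic to the simple module $X$, then neither does $M/(M_1\cap M_2)$: the latter embeds into $(M/M_1)\oplus(M/M_2)$, and the multiplicity of $X$ as a composition factor is additive over short exact sequences (Jordan--Hölder), hence the multiplicity of $X$ in a submodule of $(M/M_1)\oplus(M/M_2)$ is zero.

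Granting that $N\in\mathcal{F}$, the equivalence is then immediate in one direction: if $N\le M'$ then $M/M'$ is a quotient of $M/N$, so its composition factors form a sub-multiset of those of $M/N$ (again by Jordan--Hölder applied to $0\to N'/N\to M/N\to M/M'\to 0$ where $N'=M'$), and therefore $M/M'$ has no composition factor isomorphic to $X$. Conversely, if $M/M'$ has no composition factor isomorphic to $X$, then by definition $M'\in\mathcal{F}$, hence $N\le M'$. This establishes the stated property.

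The only point requiring a little care is the additivity of composition-factor multiplicities, which is exactly the content of the Jordan--Hölder theorem for modules of finite length: given a short exact sequence $0\to A\to B\to C\to 0$ of finite-length $R$-modules, the multiset of composition factors of $B$ is the disjoint union of those of $A$ and of $C$. I expect this to be the main (and essentially only) technical ingredient, and it is entirely standard. Everything else — finiteness of $\mathcal{F}$, stability of $\mathcal{F}$ under finite intersections, and the behaviour of composition factors under passing to submodules and quotients — follows formally from it. No genuine obstacle is anticipated; the lemma is a purely formal consequence of finite-length module theory, and the proof is short.
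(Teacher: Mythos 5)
Your proof is correct, and it takes a genuinely different and shorter route than the paper's. The paper proves by induction on composition length that the family $\mathscr{E}_X(M)$ of submodules $M'$ with $M/M'$ excluding $X$ has a unique minimal element, via a somewhat delicate case analysis built around a simple submodule $S\leq M$. You instead observe that $\mathscr{E}_X(M)$ is closed under intersection — because $M/(M_1\cap M_2)$ embeds diagonally into $(M/M_1)\oplus(M/M_2)$ and composition-factor multiplicities are additive over short exact sequences — so the unique minimal element is simply $N=\bigcap_{M'\in\mathscr{E}_X(M)}M'$, with $N\in\mathscr{E}_X(M)$ because the intersection is realised by finitely many members. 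This is cleaner and avoids the induction entirely; the paper's argument yields no additional information that yours does not. One small correction: your claim that the family $\mathcal{F}$ is \emph{finite} is false in general (a finite-length module can have infinitely many submodules, e.g.\ a two-dimensional vector space over an infinite field). What you actually need, and what you state next, is only that the intersection is attained by finitely many members; this follows from the descending chain condition, since among all finite intersections of members of $\mathcal{F}$ there is one of minimal length, and it must equal $N$. With that phrase deleted or replaced by an appeal to DCC, the argument is complete.
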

\begin{proof}
Given an $R$-module $M$ of finite length, let us say that a submodule $N \leq M$ \emph{excludes $X$} if the quotient $M/N$ has no composition factors isomorphic to $X$, and let $\mathscr{E}_X(M)$ denote the set of all submodules of $M$ which exclude $X$. We note that if $N \in \mathscr{E}_X(M)$ and $N \leq N' \leq M$ then $N' \in \mathscr{E}_X(M)$ 
also, since the composition factors %(up to isomorphism) % by nature, composition factors are already isomorphism classes (with multiplicities)
of $M/N'$ form a subset of those of $M/N$. We also note that by similar reasoning, if $M' \in \mathscr{E}_X(M)$ then $\mathscr{E}_X(M')\subseteq \mathscr{E}_X(M)$. Clearly $\mathscr{E}_X(M)$ is always nonempty since it contains at least the module $M$. 
For every finite-length $R$-module $M$ the set $\mathscr{E}_X(M)$ has at least one minimal element with respect to inclusion. We claim that
$\mathscr{E}_X(M)$ always has a \emph{unique} minimal element.
To see that this claim implies the lemma, fix $M$ and suppose that $N$ is the unique minimal element of $\mathscr{E}_X(M)$. If $M'\leq M$ excludes $X$, choose any  minimal element $N' \in \mathscr{E}_X(M')$ and note that it is also a minimal element of $\mathscr{E}_X(M)$, hence equal to $N$, so $N=N'\leq M'$. If on the other hand $N \leq M'$ then $M'$ excludes $X$ since the composition factors of $M/M'$ are a subset of those of $M/N$. The lemma thus follows from the claim.

It remains to prove the claim. 
We use induction on the composition length of $M$. If the composition length of $M$ is zero then clearly $\{0\}$ is the desired unique minimal submodule. Otherwise, fix $M$, suppose that the result has been proved for all modules of strictly smaller composition length, and for the remainder of the proof fix an arbitrary simple submodule $S$ of $M$. 
%Since $M/S$ has smaller composition length than $M$,
By the induction hypothesis $\mathscr{E}_X(M/S)$ contains a unique minimal element, which can be
written as $\hat{N}/S$ for a  unique module $\hat{N}$ such that $S \leq \hat{N} \leq M$.
We then have $M/\hat{N} \simeq (M/S)/(\hat{N}/S)$, and by the definition of $\hat{N}$ the latter has no composition factors isomorphic to $X$, so we deduce that $\hat{N} \in \mathscr{E}_X(M)$.    

We now claim that if $N$ is a minimal element of $\mathscr{E}_X(M)$, then one of two outcomes holds: either $\hat{N}=N$, or  $\hat{N}= N \oplus S$ and also $S \not\simeq X$. Suppose that $\hat{N}\neq N$. We show in turn that $\hat{N} \leq N+S$, that $N \cap S=\{0\}$, that $S \not\simeq X$ and that $N+ S\leq\hat{N}$. Clearly, $N+S\in \mathscr{E}_X(M)$, and since $(M/S)/((N+S)/S) \simeq M/(N+S)$, we have $(N+S)/S \in \mathscr{E}_X(M/S)$.
By the induction hypothesis there exists a unique minimal submodule of $(N+S)/S\leq M/S$  which excludes $X$, and since this submodule is also a minimal element of $\mathscr{E}_X(M/S)$ it must by the induction hypothesis equal $\hat{N}/S$, so $\hat{N}/S \leq (N+S)/S$ and therefore $\hat{N} \leq N+S$. 
Since $S$ is simple, the intersection $N \cap S$ is either the zero module or $S$. In the latter case we would have $\hat{N} \leq N+S=N \neq \hat{N}$ yielding $\hat{N}<N$ which would contradict the minimality of $N$ in $\mathscr{E}_X(M)$, so $N \cap S$ must be $\{0\}$ and the sum $N+S$ must be direct. Now, the composition factors of $M/N$ are the composition factors of $M/(N\oplus S)$ together with those of $(N\oplus S)/N$, so in particular $S$ is a composition factor of $M/N$ and is therefore not isomorphic to $X$. 
It remains only to demonstrate that $N\oplus S\leq \hat{N}$. Since $N+S \leq N+\hat{N} \leq N+S$ we have $N+\hat{N}=N \oplus S$ and therefore $\hat{N}/(N\cap \hat{N})\simeq (N+\hat{N})/N = (N\oplus S)/N \simeq S$ by the second isomorphism theorem. 
The composition factors of $M/(N \cap \hat{N})$ are the composition factors of $M/\hat{N}$ together with the composition factors of $\hat{N}/(N\cap \hat{N})$, and since $S \not\simeq X$ and $\hat{N}\in \mathscr{E}_X(M)$ it follows that $N \cap \hat{N} \in \mathscr{E}_X(M)$. Since $N$ is minimal in $\mathscr{E}_X(M)$ we cannot have $N \cap \hat{N}<N$, so $N \cap \hat{N}=N$, which means precisely that $N \leq\hat{N}$ and thence $N+S \leq\hat{N}$. The claim is proved.

To complete the proof of the induction step it suffices to show that there cannot be two distinct minimal elements of $\mathscr{E}_X(M)$. If two such elements $N_1, N_2$ exist then it follows from the previous claim that 
$\hat{N}=N_1\oplus S = N_2 \oplus S$ with $S\not\simeq X$.
We now claim that $((N_1 \cap N_2)+S)/S$ is a proper submodule of $\hat{N}/S$ which excludes $X$, contradicting the minimality of $\hat{N}/S$ in $\mathscr{E}_X(M/S)$. To show this claim, note that by the minimality and distinctness of $N_1$ and $N_2$, we have $N_1 \cap N_2<N_1$ and therefore $(N_1 \cap N_2)+S < N_1+S=\hat{N}$ using the fact that the sum $N_1+S$ is direct; therefore 
$((N_1 \cap N_2)+S)/S<\hat{N}/S$.
We now show that $((N_1 \cap N_2)+S)/S \in \mathscr{E}_X(M/S)$. Every composition factor of $(M/S)/(((N_1 \cap N_2)+S)/S)$ is either a composition factor of $(M/S)/(\hat{N}/S)$ or a composition factor of  $(\hat{N}/S)/((N_1 \cap N_2)+S)/S)$. By definition of $\hat{N}/S$ the former has no composition factors isomorphic to $X$, while on the other hand
\begin{align*}\frac{\hat{N}/S}{(N_1\cap N_2)+S)/S} &\simeq \frac{\hat{N}}{(N_1 \cap N_2)+S} \\
&=\frac{N_1\oplus S}{(N_1 \cap N_2)\oplus S}\simeq \frac{N_1}{N_1 \cap N_2}\simeq \frac{N_1+N_2}{N_2}=\frac{\hat{N}}{N_2} \simeq S \not\simeq X\end{align*}
using the standard isomorphism theorems and the previous claim. This completes the proof of the induction step and hence proves the lemma.
\end{proof}

We may now prove Theorem \ref{th:main-tech}\ref{it:exist-subvarieties}. For every $j=1,\ldots,r$ let $X_{j,1},\ldots,X_{j,n_j}$ be a maximal list of non-pairwise-isomorphic composition factors of the $\R[G^o]$-module $V_j$. Using Lemma \ref{le:minimal-submodule}, for every $\ell=1,\ldots,n_j$ let $M_{j,\ell}$ be the submodule of $V_j$ such that for every submodule $N$ of $V_j$, the quotient module $V_j/N$ has no composition factors isomorphic to $X_{j,\ell}$  if and only if $M_{j,\ell} \leq N$. For each $\mathbf{U}=(U_j/U_j')_{j=1}^r \in \mathfrak{U}_S$,  by the Jordan-H\"older theorem there exists for every $j=1,\ldots,r$ a unique integer $\ell_j(\mathbf{U})$ such that $U_j/U_j'$ is isomorphic to $X_{j,\ell_j(\mathbf{U})}$ as an $\R[G^o]$-module. 

Now let $t \in \R$ and $\mathbf{b}=(\beta_j)_{j=1}^r \in [0,\infty)^r$
and consider the two sets
\begin{equation}\label{eq:first-set}\left\{(Q_j)_{j=1}^r \in \prod_{j=1}^r \End(V_j) \colon \lim_{n \to \infty} \frac{1}{n}\log \sum_{|\iii|=n} \prod_{j=1}^r \|Q_j \rho_j(g_\iii)\|^{\beta_j}\leq t\right\}\end{equation}
and
\begin{equation}\label{eq:second-set}\bigcap_{\substack{\mathbf{U} \in \mathfrak{U}_S \\ P(\Psi_{\mathbf{U},\mathbf{b}})> t}} \bigcup_{\substack{1 \leq i \leq r \\ \beta_i \neq 0}} \bigcap_{v \in M_{i,\ell_i(\mathbf{U})}} \left\{(Q_j)_{j=1}^r \in \prod_{j=1}^r \End(V_j) \colon Q_i v=0\right\},\end{equation}
where we understand the latter as being equal to $\prod_{j=1}^r \End(V_j)$ in the case where $P(\Psi_{\mathbf{U},\mathbf{b}})\leq t$ for every $\mathbf{U} \in \mathfrak{U}_S$. We claim that the two sets are necessarily equal. 
We can assume without loss of generality that $\beta_j>0$ for every $j=1,\ldots,r$. 

 Consider now an arbitrary tuple $(Q_j)_{j=1}^r \in \prod_{j=1}^r \End(V_j)$.  If $Q_i=0$ for some value of $i$  then $(Q_j)_{j=1}^r$ clearly belongs to both of the two sets, so suppose instead that every $Q_i$ is nonzero. For each $j=1,\ldots,r$ let $Z_j<V_j$ be the unique maximal $\R[G^o]$-submodule of $V_j$ which is a subset of the vector space $\ker Q_j$, and define $\mathbf{Z}=(V_j/Z_j)_{j=1}^r \in \mathfrak{U}$. We claim that the following statements are all equivalent:
\begin{enumerate}[1.]
\item 
The limit
\[\lim_{n \to \infty} \frac{1}{n}\log \sum_{|\iii|=n} \prod_{j=1}^r \|Q_j \rho_j(g_\iii)\|^{\beta_j}\]
is less than or equal to $t$.
\item
The pressure $P(\Psi_{\mathbf{Z}, \mathbf{b}})$ is less than or equal to $t$.
\item 
There does not exist $\mathbf{U} \in \mathfrak{U}_S$ such that $\mathbf{U} \preceq \mathbf{Z}$ and $P(\Psi_{\mathbf{U}, \mathbf{b}})>t$.
    \item 
For every $\mathbf{U} \in \mathfrak{U}_S$ such that $P(\Psi_{\mathbf{U}, \mathbf{b}})>t$, there exists $j$ such that $X_{j,\ell_j(\mathbf{U})}$ is not isomorphic to any composition factor of the module $V_j/Z_j$. 
\item 
For every $\mathbf{U} \in \mathfrak{U}_S$ such that $P(\Psi_{\mathbf{U}, \mathbf{b}})>t$, there exists $j$ such that $M_{j,\ell_j(\mathbf{U})} \leq Z_j$.   
\item 
For every $\mathbf{U} \in \mathfrak{U}_S$ such that $P(\Psi_{\mathbf{U}, \mathbf{b}})>t$, there exists $j$ such that $M_{j,\ell_j(\mathbf{U})} \subseteq \ker Q_j$. 
\item 
For every $\mathbf{U} \in \mathfrak{U}_S$ such that $P(\Psi_{\mathbf{U}, \mathbf{b}})>t$, there exists $j$ such that $Q_jv=0$ for every $v \in M_{j,\ell_j(\mathbf{U})}$. 
\end{enumerate}
Indeed, the equivalence of the first and second statements is given directly by Theorem \ref{th:main-tech}\ref{it:q-pressure} and that of the second and third statements by Theorem \ref{th:main-tech}\ref{it:max-over-composition-factors}. To compare the third and fourth statements we consider their negations. If the third statement is false then there exists $\mathbf{U}=(U_j/U_j')_{j=1}^r \in \mathfrak{U}_S$ such that $\mathbf{U} \preceq \mathbf{Z}$ and $P(\Psi_{\mathbf{U}, \mathbf{b}})>t$, and for every $j=1,\ldots,r$, by definition $X_{j,\ell_j(\mathbf{U})}$ is isomorphic to the composition factor $U_j/U_j'$ of $V_j/Z_j$. Thus the fourth statement is false in this case. Conversely, if the fourth statement is false then there exists $\mathbf{U} \in \mathfrak{U}_S$ such that $P(\Psi_{\mathbf{U},\mathbf{b}})>t$ and such that for every $j=1,\ldots,r$, the module $X_{j,\ell_j(\mathbf{U})}$ is isomorphic to some composition factor $\hat{U}_j/\hat{U}_j'$ of $V_j/Z_j$. Now define $\hat{\mathbf{U}}:=(\hat{U}_j/\hat{U}_j')_{j=1}^r \preceq \mathbf{Z}$ and note that $\hat{\mathbf{U}} \in \mathfrak{U}_S$ and that $P(\Psi_{\hat{\mathbf{U}},\mathbf{b}})=P(\Psi_{\mathbf{U},\mathbf{b}})>t$ by Theorem \ref{th:main-tech}\ref{it:potential-basics}. Thus in this case the third statement is false. We have shown so far that the first four statements are equivalent. But the remaining equivalences are straightforward: the fourth and fifth statements coincide due to the defining property of the modules $M_{j,\ell}$; the fifth and sixth statements agree due to the defining property of the modules $Z_j$; and the final equivalence simply re-states the definition of the kernel of a linear map. The seven statements are therefore equivalent as claimed. 
 Since in particular the first and final statements are equivalent, we conclude that $(Q_j)_{j=1}^r$ belongs to the set \eqref{eq:first-set}
if and only if it belongs to \eqref{eq:second-set}. The two sets are thus equal, and since the set \eqref{eq:second-set} is evidently algebraic we have proved that in all cases the sub-level set \eqref{eq:first-set} is algebraic as required. 

To complete the proof of the theorem it suffices to count the number of distinct sets which may arise from the formula \eqref{eq:second-set}. Suppose first that the vector $\mathbf{b} \in [0,\infty)^r$ is fixed. By Theorem \ref{th:main-tech}\ref{it:potential-basics} the value of $P(\Psi_{\mathbf{U}, \mathbf{b}})$ for $\mathbf{U}=(U_j/U_j')_{j=1}^r\in \mathfrak{U}_S$ depends only on the isomorphism class of $U_j/U_j'$ for each value of $j$ such that $\beta_j \neq 0$, so $P(\Psi_{\mathbf{U}, \mathbf{b}})$ can take at most $m=\prod_{1 \leq j\leq r\colon \beta_j \neq 0} n_j$ distinct values as $\mathbf{U}$ varies throughout $\mathfrak{U}_S$. Call these values $t_1 < t_2 < \cdots < t_{m'}$, say, where $1 \leq m'\leq m$. Clearly the value of the set \eqref{eq:second-set} is determined completely by which of the intervals $(-\infty, t_1)$, $[t_1, t_2)$, \ldots, $[t_{m'-1}, t_m)$, $[t_m,\infty)$ contains $t$, so the set \eqref{eq:second-set} -- and hence  
\eqref{eq:first-set} -- can take at most $m+1$ distinct values as $t \in \R$ varies while $\mathbf{b}$ is held fixed. On the other hand if both $\mathbf{b}$ and $t$ are allowed to vary,  then it is clear that the number of possible values of \eqref{eq:second-set}, and hence again of \eqref{eq:first-set}, is not greater than the number of ways of first choosing a subset of the integers $\{1,\ldots,r\}$ in order to determine which parameters $\beta_j$ are to be nonzero, and then choosing for each selected integer a subset of the isomorphism classes of composition factors of the module $V_j$ in order to determine over which equivalence classes of potentials $\mathbf{U}\in \mathfrak{U}_S$ the first union will be taken. The total number of such choices is thus $\sum_{J \in \mathcal{P}(\{1,\ldots,r\})} \prod_{j \in J} 2^{n_j} = \prod_{j=1}^r (1+2^{n_j})$ as required.
The proof of the theorem is complete.

\section{Derivation of Theorem \ref{th:pressure-detail}}\label{se:derivation}% from Theorem \ref{th:main-tech}}

\subsection{Approximate subadditivity, variational properties and sub-level sets} 
We first establish those clauses of Theorem \ref{th:pressure-detail} whose proof is directly dependent on Theorem \ref{th:main-tech}, namely parts \ref{it:exist-subadditive}, \ref{it:thermodynamic}  and \ref{it:sublevel-algebraic}. To this end define $W:=\R^d$ and $g_i:=A_i \in \GL(W)$ for every $i\in \I$ and let $G$ denote the Zariski closure in $\GL_d(\R)$ of the semigroup $\{A_\iii \colon \iii \in \Gamma_\I\}$.

We first treat those clauses of \ref{it:exist-subadditive} and \ref{it:thermodynamic} which pertain to the case of a fixed map $Q \in \End(\R^d)$. Fix nonzero $Q \in \End(\R^d)$, define $r:=\rank Q$ and define $V_j:=\wedge^j\R^d$,  $Q_j:=Q^{\wedge j} \neq 0$
 and $\rho_j (g):=g^{\wedge j}$ for every $j=1,\ldots,r$ and $g \in G$. For each $j=1,\ldots,r$ let $n_j$ denote the number of distinct composition factors of $V_j$ as an $\R[G^o]$-module, and note that $1 \leq n_j\leq \dim V_j={d \choose j}$. For each $s \in [0,\rank Q]$ define a vector $\mathbf{b}(s)=(\beta_j(s))_{j=1}^r \in [0,\infty)^r$ by
\[\beta_j(s)=\left\{\begin{array}{cl}
0&\text{if }j<\lfloor s\rfloor\\
j+1-s&\text{if }j=\lfloor s \rfloor\\
s+1-j&\text{if }j=\lceil s \rceil \\
0&\text{if }j>\lceil s\rceil,
\end{array}\right.\]
and let $\mathfrak{U}$ and $\mathfrak{U}_S$ be as defined in the statement of Theorem \ref{th:main-tech}. We observe that $\sum_{j=1}^r\beta_j(s)\equiv 1$. 
For each $j=1,\ldots,r$ let $Z_j$ denote the maximal $\R[G^o]$-submodule of $\wedge^j \R^d$ which is a subset of $\ker Q_j$ and define $\mathbf{Z}:=(V_j/Z_j)_{j=1}^r\in \mathfrak{U}$. In view of the identity
\begin{equation}\label{eq:yay-for-adding-more-equations}\varphi^s(QA_\iii) = \left\|(QA_\iii)^{\wedge \lfloor s\rfloor}\right\|^{1+\lfloor s\rfloor-s}\left\|(QA_\iii)^{\wedge \lceil s\rceil}\right\|^{s-\lfloor s\rfloor}=\prod_{j=1}^r \left\|Q_j\rho_j(g_\iii)\right\|^{\beta_j(s)}\end{equation}
and Theorem \ref{th:main-tech}\ref{it:q-pressure}, there exist $C,\kappa>0$ such that for every $n \geq 1$ and $s \in [0,r]$
\[\frac{\kappa}{C} \sum_{|\iii|=n} \Psi_{\mathbf{Z},\mathbf{b}(s)}(\iii) \leq \sum_{|\iii|=n} \varphi^s(QA_\iii) \leq C\sum_{|\iii|=n} \Psi_{\mathbf{Z},\mathbf{b}(s)}(\iii)\]
 where $\Psi_{\mathbf{Z},\mathbf{b}(s)}$ is as defined in the statement of Theorem \ref{th:main-tech}. The subadditivity of the sequence
\[n \mapsto \log \left(\frac{C^3}{\kappa^2}\sum_{|\iii|=n}\varphi^s(QA_\iii)\right)\]
for every $s \in [0,r]$ follows directly from these inequalities together with the submultiplicativity of $\Psi_{\mathbf{Z},\mathbf{b}(s)}$, and this proves Theorem \ref{th:pressure-detail}\ref{it:exist-subadditive}.
By further application of Theorem \ref{th:main-tech}\ref{it:q-pressure} in combination with \eqref{eq:yay-for-adding-more-equations} we observe also that for each $s \in [0,r]$ and each ergodic measure $\mu \in \mathcal{M}_\sigma(\I^\N)$,
\[P_Q(\A,s)=P(\Psi_{\mathbf{Z},\mathbf{b}(s)})\geq 
{\ess\sup}_{\mu,\underline{\iii}} \lim_{n \to \infty} \frac{1}{n} \log \left(\frac{\varphi^s(QA_{\underline{\iii}|_n})}{\mu([\underline{\iii}|_n])}\right)\]
with equality if and only if $\mu$ is an equilibrium state of $\Psi_{\mathbf{Z},\mathbf{b}(s)}$.  The a.e. existence of the relevant limits for every ergodic measure $\mu$ also follows from Theorem \ref{th:main-tech}\ref{it:q-pressure}. These properties easily extend to arbitrary invariant measures on $\I^\N$ by an ergodic decomposition argument.

Fix $s \in [0,r]$. It follows from the final clause of Theorem \ref{th:main-tech}\ref{it:potential-basics} that the number of equivalence classes of potentials of the form $\Psi_{\mathbf{U},\mathbf{b}(s)}$ where $\mathbf{U} \in \mathfrak{U}_S$ is at least one and is not greater than $n_{\lfloor s\rfloor}n_{\lceil s\rceil}\leq {d \choose \lfloor s\rfloor }{d \choose \lceil s\rceil}$ if $s$ is non-integer, or $n_s \leq {d \choose s}$ if $s$ is integer. Let $\Psi_{\mathbf{U}_1,\mathbf{b}(s)},\ldots,\Psi_{\mathbf{U}_p,\mathbf{s}}$ be a maximal list of non-pairwise-equivalent potentials of this form, and observe that the length of this list and the potentials which are in it both have no dependence on $Q$. By Theorem \ref{th:main-tech}\ref{it:max-over-composition-factors} there exists at least one ergodic equilibrium state for $\Psi_{\mathbf{Z},\mathbf{b}(s)}$, and every such equilibrium state is the unique equilibrium state of some $\Psi_{\mathbf{U},\mathbf{b}(s)}$ such that $\mathbf{U} \in \mathfrak{U}_S$ and $\mathbf{U} \preceq \mathbf{Z}$. Since $\Psi_{\mathbf{U},\mathbf{b}(s)}$ must be equivalent to one of the potentials $\Psi_{\mathbf{U}_t,\mathbf{b}(s)}$, we conclude that every ergodic $(\varphi^s,Q)$-equilibrium state is an equilibrium state of one of the potentials $\Psi_{\mathbf{U}_t,\mathbf{b}(s)}$. By Theorem \ref{th:main-tech}\ref{it:potential-basics} each of these potentials is quasi-multiplicative, so by appeal to Theorem \ref{th:properties-of-potentials} we conclude that there exists at least one $(\varphi^s,Q)$-equilibrium state for $\A$, that all such equilibrium states have the form described in Theorem \ref{th:pressure-detail}\ref{it:thermodynamic}\ref{it:eqm-gibbs}, and that every such equilibrium state is the unique equilibrium state of one of the potentials 
$\Psi_{\mathbf{U}_1,\mathbf{b}(s)},\ldots, \Psi_{\mathbf{U}_p,\mathbf{b}(s)}$, though not all equilibrium states of those potentials are necessarily $(\varphi^s,Q)$-equilibrium states for any particular $Q$. These observations collectively establish all parts of Theorem \ref{th:pressure-detail}\ref{it:thermodynamic} except for \ref{it:eqm-psi-mixing}, which follows by direct application of Theorem \ref{th:main-tech}\ref{it:psi-mixing-tech}.

We now consider Theorem \ref{th:pressure-detail}\ref{it:sublevel-algebraic}. Here we slightly vary the parameters in our application of Theorem \ref{th:main-tech} by taking $r:=d$ and then defining $W$, $g_i$, $G$, $V_j$, $\rho_j$, $n_j$ and $\mathbf{b}(s)$ in the same way as before. For every $s \in [0,d]$ and $t \in \R$, the set
 \[\hat{\mathcal{V}}_{s,t}:=\left\{(Q_j)_{j=1}^d \in \prod_{j=1}^d \End(\wedge^j \R^d)\colon \lim_{n \to \infty} \frac{1}{n}\log \sum_{|\iii|=n} \prod_{j=1}^d \left\|Q_j\rho_j(A_\iii)\right\|^{\beta_j(s)} \leq t \right\}\]
is an affine subvariety of $\prod_{j=1}^d \End(\wedge^j \R^d)$ by Theorem \ref{th:main-tech}\ref{it:exist-subvarieties}, and additionally we have
\[\left|\left\{\hat{\mathcal{V}}_{s,t} \colon t\in \R\right\}\right| \leq 1+\prod_{\substack{1 \leq j\leq d\\\beta_j(s)\neq 0}} n_j = \left\{\begin{array}{cl} 1+n_s&\text{if }n_s \in \Z\\
1+n_{\lfloor s\rfloor}n_{\lceil s\rceil}&\text{if }n_s\notin \Z
\end{array} \right.\]
for all $s \in [0,d]$, and 
\[\left|\left\{\hat{\mathcal{V}}_{s,t} \colon s \in [0,d]\text{ and }t\in \R\right\}\right| \leq \prod_{j=1}^d \left(1+2^{n_j}\right) .\]
The sub-level sets
\[\mathcal{V}_{s,t}:=\left\{Q \in \End(\R^d) \colon  \lim_{n \to \infty} \frac{1}{n}\log \sum_{|\iii|=n} \varphi^s(QA_\iii)\leq t \right\}\]
clearly satisfy
\[\mathcal{V}_{s,t}=\left\{Q \in \End(\R^d) \colon  (Q^{\wedge j})_{j=1}^d \in {\hat{\mathcal{V}}}_{s,t}\right\}\]
for every $s \in [0,d]$ and $t \in \R$, and each is thus the preimage of an algebraic set with respect to an algebraic function. Thus each $\mathcal{V}_{s,t}$ is an algebraic set as claimed, and the claimed bound on the number of distinct such sets is immediate. 

To complete the proof of Theorem \ref{th:pressure-detail}\ref{it:sublevel-algebraic} it remains only to prove the invariance of the sub-level sets $\mathcal{V}_{s,t}$ with respect to right multiplication by the linear maps $A_i$. For fixed $i\in\I$, $Q \in \End(\R^d)$, $s \in [0,d]$ and $t \in \R$ we  clearly have
\[P_{QA_i}(\A,s) =\lim_{n \to \infty} \frac{1}{n}\log \sum_{|\jjj|=n} \varphi^s(QA_i A_\jjj) \leq \lim_{n \to \infty} \frac{1}{n}\log \sum_{|\iii|=n+1} \varphi^s(Q A_\iii) = P_Q(\A,s)\]
which directly yields $\mathcal{V}_{s,t}A_i \subseteq \mathcal{V}_{s,t}$. It follows immediately that $\mathcal{V}_{s,t}A_i^{n+1} \subseteq \mathcal{V}_{s,t}A_i^{n}$ for every $n \geq 0$, and since $A_i$ is linear and invertible each of these sets is an affine subvariety of $\End(\R^d)$. By the Noetherian property we deduce that $\mathcal{V}_{s,t}A_i^{n+1}=\mathcal{V}_{s,t}A_i^n$ for all large enough $n$ from which it follows that $\mathcal{V}_{s,t}A_i=\mathcal{V}_{s,t}$ as required. This concludes the proof of Theorem \ref{th:pressure-detail}\ref{it:sublevel-algebraic}.

The proofs of the remaining parts of Theorem \ref{th:pressure-detail} are comparatively elementary and make no further direct use of Theorem \ref{th:main-tech}.

\subsection{Lipschitz continuity and convexity} The proof of Theorem \ref{th:pressure-detail}\ref{it:lipschitz-convexity} modifies \cite[\S4]{Fa88} to accommodate the presence of the non-invertible $Q$. Choose $\kappa>0$ such that $e^{-\kappa} \leq \sigma_d(A_i)\leq \sigma_1(A_i) \leq e^\kappa$ for every $i\in \I$. If $0 \leq s\leq s+t \leq \rank Q$ then clearly
\[P_Q(\A, s+t) = \lim_{n \to \infty} \frac{1}{n}\log \sum_{|\iii|=n} \varphi^{s+t}(QA_\iii)
\leq  P_Q(\A, s)+\kappa t,\]
and likewise
\[P_Q(\A,s+t) 
\geq   \lim_{n \to \infty} \frac{1}{n}\log \sum_{|\iii|=n} \varphi^{s}(QA_\iii)\sigma_{\lceil s+t\rceil}(Q)^t \sigma_d(A_\iii)^t \geq P_Q(\A, s)-\kappa t,\]
using the inequality $\sigma_k(B_1B_2) \geq \sigma_k(B_1)\sigma_d(B_2) \geq \sigma_{\lceil s+t\rceil} (B_1)\sigma_d(B_2) $ for $\lceil s\rceil \leq k\leq\lceil s+t \rceil$. It follows that  $\left|P_Q(\A, s_1) - P_Q(\A, s_2)\right| \leq \kappa |s_1-s_2|$ for all $s_1, s_2 \in [0, \rank Q]$. To demonstrate convexity on $[\ell-1,\ell]$ we simply note that if $\ell-1 \leq s_1 < s_2 \leq \ell$ and $\alpha\in (0,1)$ then for every $n \geq 1$
\begin{align*}\sum_{|\iii|=n} \varphi^{\alpha s_1 + (1-\alpha)s_2}(QA_\iii) &=\sum_{|\iii|=n}\varphi^{s_1}(QA_\iii)^\alpha \varphi^{s_2}(QA_\iii)^{1-\alpha}\\
&\leq\left(\sum_{|\iii|=n}\varphi^{s_1}(QA_\iii)\right)^\alpha \left(\sum_{|\iii|=n}\varphi^{s_2}(QA_\iii)\right)^{1-\alpha }\end{align*}
by H\"older's inequality with $p=\frac{1}{\alpha}$ and $q=\frac{1}{1-\alpha}$, and the result follows.%, completing the proof of Theorem \ref{th:pressure-detail}\ref{it:lipschitz-convexity}.

\subsection{Monotonicity properties} We conclude by establishing clauses \ref{it:monotone-A} and \ref{it:monotone-ker-q} of Theorem \ref{th:pressure-detail}. To deal with \ref{it:monotone-A} we apply the arguments used in the case $Q=\id$ in \cite{BoMo18,KaMo18} essentially verbatim. If $Q \in \End(\R^d)$, $s \in [0,\rank Q]$ and $\A'=(A_i)_{i \in \J}$ where $\emptyset\neq \J \subsetneq \I$ then trivially $P_Q(\A',s) \leq P_Q(\A,s)$ by inspection of the definition, so to prove \ref{it:monotone-A} we assume equality and derive a contradiction. If $P_Q(\A',s) = P_Q(\A,s)$
then by Theorem \ref{th:pressure-detail}\ref{it:thermodynamic}\ref{it:eqm-at-least-one} there exists at least one $(\varphi^s,Q)$-equilibrium state of $\A'$, and by Theorem \ref{th:pressure-detail}\ref{it:thermodynamic}\ref{it:eqm-gibbs} the support of this measure is precisely $\J^\N$. This measure may be viewed as a shift-invariant measure on $\I^\N$ with support equal to $\J^\N\subsetneq \I^\N$, and since $P_Q(\A',s) = P_Q(\A,s)$ this measure is easily seen to be a $(\varphi^s,Q)$-equilibrium state of $\A$; but its support is a proper subset of $\I^\N$, contradicting Theorem \ref{th:pressure-detail}\ref{it:thermodynamic}\ref{it:eqm-gibbs}, and we conclude that the equation $P_Q(\A',s) = P_Q(\A,s)$ is impossible as required.
 
We now turn our attention to \ref{it:monotone-ker-q}. Suppose first that $Q_1, Q_2 \in \End(\R^d)$ satisfy $\ker Q_1 \subseteq \ker Q_2$. It follows that there exists $B \in \End(\R^d)$ such that $BQ_1=Q_2$ and given such a matrix $B$ we may now compute that for every $s \in [0, \rank Q_2]$
\begin{align*}P_{Q_2}(\A,s)=P_{BQ_1}(\A,s)&=\lim_{n \to \infty}\frac{1}{n}\log \sum_{|\iii|=n}\varphi^s(BQ_1A_\iii)\\
& \leq  \lim_{n \to \infty}\frac{1}{n}\log\left(\varphi^s(B)\cdot \sum_{|\iii|=n}\varphi^s(Q_1A_\iii)\right)=P_{Q_1}(\A,s)\end{align*}
as required. If $\ker Q_1 = \ker Q_2$ then since $\ker Q_1 \subseteq \ker Q_2 \subseteq \ker Q_1$ the preceding argument directly yields $P_{Q_2}(\A,s) \leq P_{Q_1}(\A,s) \leq P_{Q_2}(\A,s)$ for every $s \in [0,\rank Q_1]=[0,\rank Q_2]$
as required to finish the proof of Theorem \ref{th:pressure-detail}.

\section{Dimension estimates: proof of Theorem \ref{th:dimension-results}}\label{se:dimension}
The proof of Theorem \ref{th:dimension-results} is rooted in recent modifications (notably those of \cite{BaSiSo23} and \cite{FeLoMa22}) to the classic arguments of Falconer, K\"aenm\"aki and Jordan-Pollicott-Simon used in \cite{Fa88,JoPoSi07,Ka04}. Due to the critical role played by Theorem \ref{th:dimension-results} in this article, we include a relatively detailed proof.  
\subsection{Upper bounds}
When treating the upper bound we will suppress the notational dependence on $\mathbf{v} \in (\R^d)^{\I}$. We thus fix an affine iterated function system $(T_i)_{i \in 
\I}$ acting on $\R^d$ which is contracting with respect to some norm $\threebar{\cdot}$ and which has linearisation $(A_i)_{i \in \I}$, attractor $X$ and coding map $\Pi$.  For each $\delta>0$ we will say that a \emph{$\delta$-mesh cube} in $\R^d$ is precisely a set of the form $\prod_{j=1}^d [k_j\delta, (k_j+1)\delta)$ where $k_1,\ldots,k_d \in \Z$. If $Z\subset \R^d$ is an arbitrary bounded set, we let $\mathcal{N}_Z(\delta)$ \label{notation:NZdelta} denote the number of $\delta$-mesh cubes which intersect $Z$. 

When studying both sets and measures we will apply the following simple extension of the classic covering argument of \cite[\S5]{Fa88}, which may be proved by the same arguments.
\begin{lemma}\label{le:elementary-falconer}
Let $\mathbf{B}\subset \R^d$ be a closed Euclidean ball of diameter $L \geq 1$, let $B \in \End(\R^d)$ and let $s \in (0,\rank B]$. If $T \colon \R^d \to \R^d$ is an affine transformation of the form $Tx\equiv Bx+v$, then $\mathcal{N}_{T\mathbf{B}}(\sigma_{\lceil s\rceil}(B)) \leq CL^d \varphi^s(B)/\sigma_{\lceil s\rceil}(B)^s$ where $C>0$ depends only on $d$.
\end{lemma}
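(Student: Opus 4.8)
The plan is to follow the classic covering argument of Falconer (\cite[\S5]{Fa88}), keeping careful track of the possible non-invertibility of $B$ and of the scale factor $L$. Since $V$ is a Euclidean ball of radius $L/2$, its image $TV = BV + v$ is a translate of the ellipsoid with semi-axes $a_j := \frac{L}{2}\sigma_j(B)$ for $j = 1,\dots,d$ (those with $j > \rank B$ being degenerate). Write $k := \lceil s\rceil$; since $s \le \rank B$ we have $k \le \rank B$, so $\delta := \sigma_{\lceil s\rceil}(B) = \sigma_k(B) > 0$, and the singular values split at the index $k$, with $\sigma_j(B) \ge \delta$ for $j \le k$ and $\sigma_j(B) \le \delta$ for $j \ge k$.

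First I would cover this ellipsoid by cubes of side $\delta$ aligned to its own principal axes: in that frame it lies in the box $\prod_{j=1}^d [-a_j, a_j]$, which is covered by at most $\prod_{j=1}^d (2a_j/\delta + 1) = \prod_{j=1}^d (L\sigma_j(B)/\delta + 1)$ such cubes. I would then estimate this product by splitting the index range at $j = k$. For $j \le k$ one has $L\sigma_j(B)/\delta \ge L \ge 1$, so the $j$-th factor is at most $2L\sigma_j(B)/\delta$; for $j > k$ one has $\sigma_j(B) \le \delta$, so the $j$-th factor is at most $L + 1 \le 2L$. Multiplying, the number of aligned $\delta$-cubes is at most $2^k L^k \delta^{-k} \prod_{j=1}^k \sigma_j(B) \cdot (2L)^{d-k} = 2^d L^d\, \delta^{-k}\prod_{j=1}^k \sigma_j(B)$. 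A short case distinction, according to whether $s$ is an integer, then identifies $\delta^{-k}\prod_{j=1}^k\sigma_j(B)$ with $\varphi^s(B)/\sigma_{\lceil s\rceil}(B)^s$, both expressions being equal to $\sigma_k(B)^{-(k-1)}\prod_{j=1}^{k-1}\sigma_j(B)$.

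Finally I would pass from these aligned cubes to the standard coordinate mesh: each has diameter $\delta\sqrt{d}$, hence lies in a Euclidean ball of radius $\frac12\delta\sqrt{d}$, and any such ball meets at most $(\sqrt{d}+2)^d$ of the standard axis-aligned $\delta$-mesh cubes; multiplying the two counts yields $\mathcal{N}_{TV}(\delta) \le C(d)\, L^d\, \varphi^s(B)/\sigma_{\lceil s\rceil}(B)^s$, as claimed. I do not expect any genuine obstacle here; the only points calling for a little care are the two mismatches between the ellipsoid and the ambient integer grid — the rotation, which is absorbed in the last step by covering the aligned cubes with balls, and the translation by $v$, which is irrelevant because that last count is translation-invariant — together with the bookkeeping that forces the exponent of $L$ to come out to exactly $d$, the $k$ ``long'' axes contributing $L^k$ and the $d - k$ ``short'' axes contributing $L^{d-k}$ with no leftover powers of $L$.
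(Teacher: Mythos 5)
Your proof is correct and is exactly the classic covering argument of \cite[\S5]{Fa88} that the paper invokes without writing out (the paper merely states the lemma ``may be proved by the same arguments''): cover the image ellipsoid by axis-aligned cubes of side $\sigma_{\lceil s\rceil}(B)$ in its own principal frame, bound the count by splitting the product of semi-axis ratios at the index $\lceil s\rceil$, and pass to the standard mesh at the cost of a dimensional constant. The bookkeeping with $L$, the non-integer/integer case distinction identifying $\sigma_k(B)^{-k}\prod_{j\le k}\sigma_j(B)$ with $\varphi^s(B)/\sigma_{\lceil s\rceil}(B)^s$, and the handling of degenerate axes and the translation are all handled correctly.
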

\begin{proof}[Proof of Theorem \ref{th:dimension-results}\ref{it:dimension-upper-bounds}]

We first establish the bound on the upper box dimension of the attractor. Fix a linear map $Q \in \End(\R^d)$, and fix also a closed Euclidean ball $\mathbf{B}\subset \R^d$ which contains the attractor $X$ and which has diameter $L \geq 1$. To bound the dimension of $X$ we will apply the formula
\[\dimaffQ \A=\min\left\{\rank Q, \inf\left\{s \geq 0 \colon \sum_{n=1}^\infty \sum_{|\iii|=n} \varphi^s(QA_\iii)<\infty\right\}\right\}.\]
If $\dimaffQ =\rank Q$ then the trivial bound $\overline{\dim}_\B X \leq \rank Q$ concludes the proof, so we suppose this not to be the case and fix $s \in (0,\rank Q)$ such that the series $\sum_{n=1}^\infty \sum_{|\iii|=n}\varphi^s(QA_\iii)$ converges. To prove the required dimension bound we will show that $\mathcal{N}_{QX}(\delta) =O(\delta^{-s})$ as $\delta \to 0$.

Fix $\delta \in (0, \min_{i \in \I} \sigma_{\lceil s\rceil}(QA_i))$ for the remainder of the proof. 
For every $\underline{\iii}\in \I^\N$ let $m(\underline{\iii},\delta)$ denote the smallest integer $m \geq 1$ such that $\sigma_{\lceil s\rceil}(QA_{\underline{\iii}|_m})< \delta$. By the choice of $\delta$ we always have $m(\underline{\iii},\delta) \geq 2$, and we deduce that 
\[
\sigma_{\lceil s\rceil}\left(QA_{\underline{\iii}|_{m(\underline{\iii},\delta)}}\right) \geq \delta \cdot \min_{i\in \I} \sigma_d(A_i)\]
since otherwise the minimality of $m(\underline{\iii},\delta)$ would be contradicted. Now the cylinders $[\underline{\iii}|_{m(\underline{\iii},\delta)}]$ for $\underline{\iii} \in \I^\N$ together constitute an open cover of $\I^\N$, so let us choose a finite subcover of $\I^\N$ by distinct cylinders $[\kkk_1],\ldots,[\kkk_p]$, say. Clearly
\begin{equation}\label{eq:cover-tight} \left(\min_{i \in \I}\sigma_d(A_i)\right)\cdot\delta \leq \sigma_{\lceil s\rceil} (QA_{\kkk_j})< \delta \end{equation}
for every $j=1,\ldots,p$. Since clearly
\[QX=Q\Pi \I^\N = \bigcup_{j=1}^p Q\Pi [\kkk_j] =\bigcup_{j=1}^p QT_{\kkk_j}X \subseteq \bigcup_{j=1}^p QT_{\kkk_j}\mathbf{B}\]
we have
\begin{align*} \mathcal{N}_{QX}(\delta) \leq \sum_{j=1}^p \mathcal{N}_{QT_{\kkk_j}\mathbf{B}} (\delta)
&\leq 2^d \sum_{j=1}^p \mathcal{N}_{QT_{\kkk_j}\mathbf{B}} (\sigma_{\lceil s\rceil}(QA_{\kkk_j}))\\
&\leq C(2L)^d \sum_{j=1}^p \frac{\varphi^s(QA_{\kkk_j})}{\sigma_{\lceil s\rceil}(QA_{\kkk_j})^s}\\
&\leq C\left(\frac{2L}{\min_{i \in \I}\sigma_d(A_i)}\right)^d\sum_{j=1}^p \frac{\varphi^s(QA_{\kkk_j})}{\delta^s}\\
&\leq C\delta^{-s}\cdot \left(\frac{2L}{\min_{i \in \I}\sigma_d(A_i)}\right)^d\left(\sum_{n=1}^\infty \sum_{|\iii|=n}  \varphi^s(QA_\iii)\right),\end{align*}
where we have used \eqref{eq:cover-tight}, Lemma \ref{le:elementary-falconer} and the elementary inequality $\mathcal{N}_Z(\varepsilon') \leq 2^d\mathcal{N}_Z(\varepsilon)$ which holds whenever $0<\varepsilon<\varepsilon'$ and $Z\subseteq \R^d$. Since the series converges this gives the desired bound $\mathcal{N}_{QX}(\delta) =O(\delta^{-s})$ and proves the box dimension bound stated in Theorem \ref{th:dimension-results}\ref{it:dimension-upper-bounds}.

We now turn our attention to the bound for the upper local dimensions of measures. We retain the linear map $Q$ and ball $\mathbf{B}$ of diameter $L\geq 1$ but we otherwise discard the parameters from the preceding argument. Fix an ergodic measure $\mu \in \mathcal{M}_\sigma(\I^\N)$. For every $n \geq 1$, $k=1,\ldots,\rank Q$ and $\underline{\iii} \in \I^\N$ define
$\mathcal{Q}_{n,k}(\underline{\iii})\subseteq \I^\N$ to be the set of all $\underline{\jjj} \in [\underline{\iii}|_n]$ such that $Q\Pi(\underline{\jjj})$ belongs to the same $\sigma_k(QA_{\underline{\iii}|_n})$-mesh cube as $Q\Pi(\underline{\iii})$. Clearly for every $\iii \in \Gamma_\I$ the set
\[\mathcal{P}_{k}(\iii):=\left\{\mathcal{Q}_{|\iii|,k}(\underline{\jjj}) \colon \underline{\jjj}\in [\iii]\right\}\]
is a Borel partition of the cylinder $[\iii]$. The cardinality of $\mathcal{P}_k(\iii)$ is precisely the number of distinct $\sigma_k(QA_\iii)$-mesh cubes which intersect the set $Q\Pi[\iii]$, and since $Q\Pi[\iii]=QT_\iii X\subseteq QT_\iii \mathbf{B}$ it follows from Lemma \ref{le:elementary-falconer} that  \begin{equation}\label{eq:falconer-bound}|\mathcal{P}_{\lceil s\rceil}(\iii)|\leq CL^d \left(\frac{\varphi^s(QA_\iii)}{\sigma_{\lceil s\rceil}(QA_\iii)^s}\right)\end{equation}
for all $\iii \in \Gamma_\I$ and $s \in (0,\rank Q]$. Let us write $\mathbf{B}_d(x,r)$ for the closed ball in $\R^d$ with centre $x$ and radius $r$. We observe that each  function $\underline{\iii}\mapsto \mu(\mathcal{Q}_{n,k}(\underline{\iii}))$ is Borel measurable, and that
\begin{equation}\label{eq:Qnk-inclusion}\mathcal{Q}_{n,\lceil s\rceil}(\underline{\iii})\subseteq (Q\Pi)^{-1} \mathbf{B}_d\left(Q\Pi\left(\underline{\iii}\right),\sqrt{d}\cdot\sigma_{\lceil s\rceil}\left(QA_{\underline{\iii}|_n}\right)\right)\end{equation}
for every $\underline{\iii} \in \I^\N$, $s \in (0,\rank Q]$ and $n \geq1$, since every $\sigma_{\lceil s\rceil}(QA_{\underline{\iii}|_n})$-mesh cube has diameter precisely $\sqrt{d}\cdot\sigma_{\lceil s\rceil}(QA_{\underline{\iii}|_n})$.

Since trivially
\[{\ess\sup}_{(Q\Pi)_*\mu,Q\Pi(\underline{\iii})} \dimlocu((Q\Pi)_*\mu, Q\Pi(\underline{\iii}))  \leq \dimp \supp (Q\Pi)_*\mu \leq \rank Q,\]
the set
\[\Omega_0:=\left\{\underline{\iii} \in \I^\N \colon \dimlocu((Q\Pi)_*\mu, Q\Pi(\underline{\iii})) \leq \rank Q\right\}\]
satisfies $\mu(\Omega_0)=1$. For every $k=1,\ldots,\rank Q$ define also
\[\Omega_{k}:=\left\{\underline{\iii}\in \I^\N \colon \mu(\mathcal{Q}_{n,k}(\underline{\iii})) > \frac{1}{n^2} \cdot  \frac{\mu([\underline{\iii}|_n])}{|\mathcal{P}_k(\underline{\iii}|_n)|} \text{ for all sufficiently large }n\right\}.\]
Since for every $n \geq 1$,
\begin{align*}\mu\left(\left\{\underline{\iii} \in \I^\N \colon \mu(\mathcal{Q}_{n,k}(\underline{\iii} )) \leq \frac{1}{n^2}\cdot\frac{\mu([\underline{\iii}|_n])}{|\mathcal{P}_k(\underline{\iii}|_n)|}    \right\}\right)&= \sum_{|\iii|=n} \sum_{\substack{Z \in \mathcal{P}_k(\iii) \\ \mu(Z) \leq \mu([\iii])/n^2|\mathcal{P}_k(\iii)|}} \mu(Z)\\
&\leq \sum_{|\iii|=n} \sum_{Z \in \mathcal{P}_k(\iii)} \frac{1}{n^2}\cdot \frac{\mu([\iii])}{|\mathcal{P}_k(\iii)|} \\
&= \sum_{|\iii|=n} \frac{1}{n^2} \cdot\mu([\iii]) = \frac{1}{n^2},\end{align*}
it follows by the Borel-Cantelli Lemma that $\mu(\Omega_{k})=1$ for each $k=1,\ldots,\rank Q$. Now define $\Omega:=\bigcap_{k=0}^{\rank Q}\Omega_k$,
which obviously has full measure. To prove the upper local dimension bound  claimed in Theorem \ref{th:dimension-results}\ref{it:dimension-upper-bounds} we will show that if $\underline{\iii} \in \Omega$ then 
\begin{equation}\label{eq:dimlocu}\dimlocu((Q\Pi)_*\mu, Q\Pi(\underline{\iii})) \leq \inf\left\{s \in (0,\rank Q]\colon \limsup_{n \to \infty} \frac{1}{n}\log\left(\frac{\varphi^s(QA_{\underline{\iii}|_n})}{\mu([\underline{\iii}|_n])}\right)<0\right\}.\end{equation}
Suppose then that $\underline{\iii} \in \Omega$, $s>0$,  and
\[ \limsup_{n \to \infty} \frac{1}{n}\log\left(\frac{\varphi^s(QA_{\underline{\iii}|_n})}{\mu([\underline{\iii}|_n])}\right)<0.\]
If $s >\rank Q$ then $\dimlocu((Q\Pi)_*\mu, Q\Pi(\underline{\iii})) \leq \rank Q <s$ since $\underline{\iii} \in \Omega_0$, so suppose instead that $s \in (0,\rank Q]$. 
Since $\underline{\iii} \in 
\Omega_{\lceil s\rceil}$ we have for all large enough $n$
\begin{align*}\log \mu(\mathcal{Q}_{n,\lceil s\rceil}(\underline{\iii})) &>-2\log n + \log \mu([\underline{\iii}|_n]) - \log |\mathcal{P}_{\lceil s\rceil}(\underline{\iii}|_n)|\\
&\geq -3\log n+\log \mu([\underline{\iii}|_n]) -\log \varphi^s(QA_{\underline{\iii}|_n})+s\log \sigma_{\lceil s\rceil}(QA_{\underline{\iii}|_n})\end{align*}
where we have used \eqref{eq:falconer-bound}. Since $\sigma_{\lceil s\rceil}(QA_{\underline{\iii}|_n})<1$ for all large enough $n$, it follows that
\[\frac{\log \mu(\mathcal{Q}_{n,\lceil s\rceil}(\underline{\iii} ))}{\log \sigma_{\lceil s\rceil}(QA_{\underline{\iii} |_n})} \leq s+ \frac{-3\log n +\log \mu([\underline{\iii} |_n])  -\log \varphi^s(QA_{\underline{\iii} |_n})}{\log \sigma_{\lceil s\rceil}(QA_{\underline{\iii} |_n})}\]
whenever $n$ is sufficiently large. For all large enough $n$ the numerator of the fraction on the right-hand side is positive and the denominator negative, and therefore
\[\limsup_{n \to \infty}\frac{\log \mu(\mathcal{Q}_{n,\lceil s\rceil}(\underline{\iii}|_n ))}{\log \sigma_{\lceil s\rceil}(QA_{\underline{\iii} |_n})}\leq s.\]
Since the ratio of any two successive terms of the sequence $n \mapsto \sqrt{d}\cdot \sigma_{\lceil s\rceil}(QA_{\underline{\iii} |_n})$ is bounded, we may apply this to calculate 
 \begin{align*}\dimlocu ((Q\Pi)_*\mu, Q\Pi(\underline{\iii} ))&=\limsup_{n \to \infty} \frac{\log((Q\Pi)_*\mu)(\mathbf{B}_d(Q\Pi(\underline{\iii} ), \sqrt{d}\cdot\sigma_{\lceil s\rceil}(QA_{\underline{\iii} |_n})))}{\log(\sqrt{d}\cdot \sigma_{\lceil s\rceil}(QA_{\underline{\iii} |_n}))}\\
&=\limsup_{n \to \infty}\frac{\log \mu((Q\Pi)^{-1}(\mathbf{B}_d(Q\Pi(\underline{\iii} ), \sqrt{d}\cdot \sigma_{\lceil s\rceil}(QA_{\underline{\iii} |_n}))))}{ \log (\sqrt{d}\cdot \sigma_{\lceil s\rceil}(QA_{\underline{\iii} |_n})) }\\
&\leq \limsup_{n \to \infty}\frac{\log\mu(\mathcal{Q}_{n,\lceil s\rceil}(\underline{\iii} ))}{\log(\sqrt{d}\cdot \sigma_{\lceil s\rceil}(QA_{\underline{\iii} |_n}))}\\
&= \limsup_{n \to \infty}\frac{\log\mu(\mathcal{Q}_{n,\lceil s\rceil}(\underline{\iii} ))}{\log \sigma_{\lceil s\rceil}(QA_{\underline{\iii} |_n})}\leq  s,\end{align*}
 where we have used \eqref{eq:Qnk-inclusion} and the eventual negativity of the denominator. We conclude that every $\underline{\iii} \in \Omega$ satisfies \eqref{eq:dimlocu} as required. By Theorem \ref{th:pressure-detail}\ref{it:thermodynamic} the limit superior in \eqref{eq:dimlocu} is almost everywhere a limit for all $s \geq 0$, so this completes the proof of 
Theorem \ref{th:dimension-results}\ref{it:dimension-upper-bounds}.
\end{proof}

\subsection{Lower bounds}
For every pair of distinct infinite words $\underline{\iii},\underline{\jjj}\in \I^\N$, define $\underline{\iii}\wedge \underline{\jjj}$ to be the maximal finite word which prefixes both $\underline{\iii}$ and $\underline{\jjj}$. If $i_1 \neq j_1$, $A_{\underline{\iii}\wedge \underline{\jjj}}$ should be understood as the identity map.  We will at times identify $(\R^d)^{\I}$ with $\R^{d|\I|}$ without comment. 

The next lemma follows from the arguments of \cite[\S10.4]{BaSiSo23} together with a simple extension of \cite[Lemma 2.2]{Fa88} to the case of non-invertible linear maps:
\begin{lemma}\label{le:falconer-solomyak}
Let $Q \in \End(\R^d)$,  $(A_i)_{i \in \I} \in \GL_d(\R)^{\I}$ and $s\in (0,\rank Q)\setminus \Z$.  Suppose that $\threebar{\cdot}$ is a norm on $\R^d$ such that
\[\max_{i,j \in \I \colon i \neq j} \threebar{A_i}+\threebar{A_j}<1.\]
For every $\mathbf{v}=(v_i)_{i \in \I} \in (\R^d)^\I$ define an affine iterated function system $(T_i^{\mathbf{v}})_{i \in \I}$  by $T_i^{\mathbf{v}}x \equiv A_ix+v_i$, and let $\Pi^{\mathbf{v}}$ denote the associated coding map. 
Then:
\begin{enumerate}[(i)]
    \item 
    For every $r>0$ there exists $C>0$ such that
\[\int_{\mathbf{B}_{d|\I|}(0,r)} \frac{d\mathbf{v}}{\|Q\Pi^{\mathbf{v}}(\underline{\iii})  - Q\Pi^{\mathbf{v}}(\underline{\jjj})\|^s} \leq \frac{C}{\varphi^s(QA_{\underline{\iii}\wedge\underline{\jjj}})}\]
for all distinct $\underline{\iii},\underline{\jjj} \in \I^\N$.
    \item For every $r>0$ there exists $C>0$ such that
\[\mathcal{L}^{d|\I|} \left(\left\{\mathbf{v}\in \mathbf{B}_{d|\I|}(0,r) \colon\! \|Q\Pi^{\mathbf{v}}(\underline{\iii})  - Q\Pi^{\mathbf{v}}(\underline{\jjj})\|\leq\delta\right\}\right) \leq C \prod_{\ell=1}^{\rank Q} \min\left\{1,  \frac{\delta}{\sigma_\ell(QA_{\underline{\iii}\wedge\underline{\jjj}})}\right\}\]
for all distinct $\underline{\iii},\underline{\jjj} \in \I^\N$, where $\mathcal{L}^k$ denotes $k$-dimensional Lebesgue measure.
\end{enumerate}

\end{lemma}
We will also require the following integral estimates for dimension and absolute continuity of measures. The second clause of the below lemma is classical; the first clause was recently applied to measures on self-affine sets by D.-J. Feng,  C.-H. Lo and C.-Y. Ma in \cite{FeLoMa22}, and is proved implicitly in earlier sources such as \cite[Theorem 3.5]{SaYo97} and \cite[\S3.4]{BiPe17}.
\begin{lemma}\label{le:dimlocl-s}
Let $\nu$ be a finite Borel measure on a real vector space $W$ of dimension $k$. Then:
\begin{enumerate}[(i)]
\item\label{it:absco1}
For every $x \in \supp \nu$,
\[\dimlocl(\nu,x) \geq \sup\left\{s \geq 0 \colon \int_W \|x-y\|^{-s}d\nu(y)<\infty \right\}.\]
\item \label{it:absco2}
If
\[\int_W \liminf_{\delta \to 0} \frac{\nu(\mathbf{B}_k(x,\delta))}{\delta^k}d\nu(x)<\infty,\]
then $\nu$ is absolutely continuous with respect to Lebesgue measure on $W$. 
\end{enumerate}
\end{lemma}
\begin{proof}
If the integral in \ref{it:absco1} is finite for some $s$ then $\nu(\mathbf{B}_d(x,r))=O(r^s)$ by Markov's inequality and the result follows. The second clause may be found in, for example, \cite[Theorem 2.12]{Ma95}.\end{proof}
\begin{proof}[Proof of Theorem \ref{th:dimension-results}\ref{it:dimension-lower-bounds}] We begin by proving the clause concerning local dimensions of measures. In view of the upper bound established in Theorem \ref{th:dimension-results}\ref{it:dimension-upper-bounds} we need only consider the lower local dimension. Given $r>0$, define $\Xi_r \subset {\mathbf{B}}_{d|\I|}(0,r) \times \I^\N$ to be the set of all $(\mathbf{v}, \underline{\iii})$ such that 
\begin{equation}\label{eq:the-inequality-we-like}\dimlocl((Q\Pi^{\mathbf{v}})_*\mu, Q\Pi^{\mathbf{v}}(\underline{\iii})) \geq \sup\left\{s\in [0,\rank Q] \colon \liminf_{n \to \infty} \frac{1}{n}\log \left(\frac{\varphi^s(QA_{\underline{\iii}|_n})}{\mu([\underline{\iii}|_n]) }\right) \geq 0 \right \}. \end{equation}
Standard arguments  demonstrate that $\Xi_r$ is Borel measurable, and by Theorem \ref{th:pressure-detail}\ref{it:thermodynamic}, for $\mu$-almost-every $\underline{\iii}$ the limit inferior is a limit for all $s$. To prove this clause of the theorem it is enough to show that for every $r>0$,
\[\{\mathbf{v} \in {\mathbf{B}}_{d|\I|}(0,r) \colon (\mathbf{v},\underline{\iii})\in \Xi_r\text{ for }\mu\text{-a.e. }\underline{\iii}\}\]
has full Lebesgue measure in $\mathbf{B}_{d|\I|}(0,r)$. By Fubini's theorem applied to the characteristic function of $\Xi_r$, this holds if and only if $\Xi_r$ itself has full measure with respect to ${\mathrm{Lebesgue}} \times \mu$, if and only if the set
\[\{\underline{\iii}\in  \I^\N \colon (\mathbf{v},\underline{\iii})\in \Xi_r\text{ for Lebesgue a.e. }\mathbf{v} \in \mathbf{B}_{d|\I|}(0,r)\}\]
has full measure with respect to $\mu$. We will show that this set precisely equals $\I^\N$. Indeed, given $\underline{\iii} \in \I^\N$ we observe that \eqref{eq:the-inequality-we-like} holds trivially for all $\mathbf{v}\in \mathbf{B}_{d|\I|}(0,r)$ if the right-hand side is zero. Otherwise, it suffices to show that for every non-integer rational number $s \in (0,\rank Q)$ satisfying
\begin{equation}\label{eq:choice-of-s}\liminf_{n \to \infty} \frac{1}{n}\log \left(\frac{\varphi^s(QA_{\underline{\iii}|_n})}{ \mu([\underline{\iii}|_n])}\right)>0,\end{equation}
the set
\[\left\{\mathbf{v}\in\mathbf{B}_{d|\I|}(0,r) \colon \dimlocl((Q\Pi^{\mathbf{v}})_*\mu, Q\Pi^{\mathbf{v}})(\underline{\iii})) \geq s\right\}\]
has full Lebesgue measure in $\mathbf{B}_{d|\I|}(0,r)$. By Lemma \ref{le:dimlocl-s} this holds if the integral 
\[\int_{\mathbf{B}_{d|\I|}(0,r)} \int_{\R^d} \|Q\Pi^{\mathbf{v}}(\underline{\iii})-y\|^{-s}d((Q\Pi^{\mathbf{v}})_*\mu)(y) d\mathbf{v}\]
is finite; but we have
\begin{align*}
{\lefteqn{\int_{\mathbf{B}_{d|\I|}(0,r)} \int_{\R^d} \|Q\Pi^{\mathbf{v}}(\underline{\iii})-y\|^{-s}d((Q\Pi^{\mathbf{v}})_*\mu)(y) d\mathbf{v}}}&\\
&=\int_{\mathbf{B}_{d|\I|}(0,r)}  \int_{\I^\N} \|Q\Pi^{\mathbf{v}}(\underline{\iii})-Q\Pi^{\mathbf{v}}(\underline{\jjj})\|^{-s}d\mu(\underline{\jjj}) d\mathbf{v}\\
&= \int_{\I^\N} \int_{\mathbf{B}_{d|\I|}(0,r)}\|Q\Pi^{\mathbf{v}}(\underline{\iii})-Q\Pi^{\mathbf{v}}(\underline{\jjj})\|^{-s}d\mathbf{v}d\mu(\underline{\jjj})\\
&\leq \int_{\I^\N} \frac{C}{\varphi^s(QA_{\underline{\iii}\wedge \underline{\jjj}})} d\mu(\underline{\jjj})
= C\sum_{n=0}^\infty  \frac{\mu([\underline{\iii}|_n])-\mu([\underline{\iii}|_{n+1}]) }{\varphi^s(QA_{\underline{\iii}|_n})}\leq C\sum_{n=0}^\infty  \frac{\mu([\underline{\iii}|_n])}{\varphi^s(QA_{\underline{\iii}|_n})}<\infty\end{align*}
where we have used Fubini's theorem, Lemma \ref{le:falconer-solomyak} and \eqref{eq:choice-of-s}. Since $r>0$ was arbitrary this completes the proof of the clause concerning dimensions of measures. 

The argument concerning the absolute continuity of $(Q\Pi^{\mathbf{v}})_*\mu_\Upsilon$ proceeds along similar lines. 
For each pair of integers $m,\ell \geq 1$ define
\[\Upsilon_{m,\ell}:=\left\{\underline{\iii}\in \I^\N \colon  \frac{1}{n}\log \left(\frac{\varphi^k(QA_{\underline{\iii}|_n})}{\mu([\underline{\iii}|_n])}\right)>\frac{1}{\ell} \text{ for all }n \geq m\right\}\]
and let $\mu_{\Upsilon_{m,\ell}}$ denote the restriction of $\mu$ to the set $\Upsilon_{m,\ell}$. It is enough to show that for every pair of integers $m$ and $\ell$, for Lebesgue a.e. $\mathbf{v}$ the measure $(Q\Pi^{\mathbf{v}})_*\mu_{\Upsilon_{m,\ell}}$ is absolutely continuous with respect to Lebesgue measure on $\im Q$. Defining $k:=\rank Q$, in view of Lemma \ref{le:dimlocl-s} it suffices to show that for every $r>0$
\[\int_{\mathbf{B}_{d|\I|}(0,r)}\int_{\im Q} \liminf_{\delta \to 0} \frac{(Q_*\Pi_*^{\mathbf{v}}\mu_{\Upsilon_{m,\ell}})(\mathbf{B}_k(x,\delta))}{\delta^k} d(Q_*\Pi_*^{\mathbf{v}}\mu_{\Upsilon_{m,\ell}})(x)d\mathbf{v}<\infty.\]
For fixed $m$, $\ell$ and $r$, using Fatou's lemma, Fubini's theorem, Lemma \ref{le:falconer-solomyak} and the definition of $\Upsilon_{m,\ell}$ yields the upper bound
\begin{align*}{\lefteqn{\liminf_{\delta \to 0} \int_{\mathbf{B}_{d|\I|}(0,r)}\int_{\im Q}  \frac{(Q_*\Pi_*^{\mathbf{v}}\mu_{\Upsilon_{m,\ell}})(\mathbf{B}_k(x,\delta)) }{\delta^k}d(Q_*\Pi_*^{\mathbf{v}}\mu_{\Upsilon_{m,\ell}})(x)d\mathbf{v}}}&\\
&=\liminf_{\delta \to 0} \int_{\mathbf{B}_{d|\I|}(0,r)}\int_{\I^\N} \frac{\mu_{\Upsilon_{m,\ell}}(\{\underline{\jjj} \in \I^\N \colon Q\Pi^{\mathbf{v}}(\underline{\jjj} )\in \mathbf{B}_k( Q\Pi^{\mathbf{v}}(\underline{\iii}),\delta)\}) }{\delta^k}d\mu_{\Upsilon_{m,\ell}}(\underline{\iii})d\mathbf{v}\\
&\leq\liminf_{\delta \to 0} \int_{\mathbf{B}_{d|\I|}(0,r)}\int_{\I^\N} \frac{\mu(\{\underline{\jjj}  \in \I^\N \colon Q\Pi^{\mathbf{v}}(\underline{\jjj} )\in \mathbf{B}_k( Q\Pi^{\mathbf{v}}(\underline{\iii}),\delta)\}) }{\delta^k}d\mu_{\Upsilon_{m,\ell}}(\underline{\iii})d\mathbf{v}\\
&=\liminf_{\delta \to 0}\int_{\I^\N}\int_{\I^\N}\frac{\mathcal{L}^{d|\I|}(\{\mathbf{v}\in \mathbf{B}_{d|\I|}(0,r) \colon \|Q\Pi^{\mathbf{v}}(\underline{\iii})-Q\Pi^{\mathbf{v}}(\underline{\jjj})\|\leq \delta\})}{\delta^k}  d\mu_{\Upsilon_{m,\ell}}(\underline{\iii})d\mu(\underline{\jjj})  \\
&\leq \liminf_{\delta \to 0}\int_{\I^\N}\int_{\I^\N}\frac{C}{\delta^k}\prod_{\ell=1}^k \min\left\{1,\frac{\delta}{\sigma_\ell(QA_{\underline{\iii}\wedge\underline{\jjj}})} \right\}d\mu_{\Upsilon_{m,\ell}}(\underline{\iii})d\mu(\underline{\jjj})  \\
&\leq C\int_{\I^\N}\int_{\I^\N} \frac{1}{\varphi^k(QA_{\underline{\iii}\wedge\underline{\jjj}})} d\mu_{\Upsilon_{m,\ell}}(\underline{\iii})d\mu(\underline{\jjj})  \leq C\int_{\I^\N}\sum_{n=0}^\infty  \frac{\mu([\underline{\iii}|_n])}{\varphi^k(QA_{\underline{\iii}|_n})}d\mu_{\Upsilon_{m,\ell}}(\underline{\iii})\\
&\leq C\sum_{n=0}^{m-1} \max_{|\iii|=n} \frac{1}{\varphi^k(QA_\iii)} + C\sum_{n=m}^\infty e^{-n/\ell} <\infty.\end{align*}
Since $m$, $\ell$ and $r$ were arbitrary, we conclude that for Lebesgue almost every $\mathbf{v} \in (\R^d)^\I$ the (not \emph{a priori} nonzero) finite measure $(Q\Pi^{\mathbf{v}})_*\mu_\Upsilon$ is absolutely continuous with respect to Lebesgue measure on $\im Q$.

It remains to consider the dimensions and measures of the sets $QX^{\mathbf{v}}$.
Define $s:=\dimaffQ \A \in [0,\rank Q]$. By Theorem \ref{th:pressure-detail}\ref{it:thermodynamic}\ref{it:eqm-at-least-one} there exists a $(\varphi^s,Q)$-equilibrium state $\mu \in \mathcal{M}(\I^\N)$ for $\A$, which by definition satisfies 
\[{\ess \sup}_{\mu, \underline{\iii}} \lim_{n \to \infty} \frac{1}{n}\log \left(\frac{\varphi^s(QA_{\underline{\iii}|_n})}{\mu([\underline{\iii}|_n])}\right) =P_Q(\A,s)\geq0.\]
Applying the lower bound for local dimensions of measures we conclude that for Lebesgue almost every $\mathbf{v} \in (\R^d)^{\I}$
\[{\ess \sup}_{(Q\Pi^{\mathbf{v}})_* \mu, x} \dimloc ((Q\Pi^{\mathbf{v}})_* \mu,x) \geq \dimaffQ \A\]
and therefore
\[\dimh QX^{\mathbf{v}} \geq \dim_{\mathsf{H}}^* (Q\Pi^{\mathbf{v}})_* \mu \geq \dimaffQ \A\]
as required. Similarly, if $P_Q(\A, \rank Q)>0$ then taking $\mu$ to be a $(\varphi^{\rank Q}, Q)$-equilibrium state for $\A$, we observe that the measure $\mu_\Upsilon$ is nonzero and therefore $(Q\Pi^{\mathbf{v}})_*\mu_\Upsilon$ is a nonzero absolutely continuous measure for Lebesgue almost every $\mathbf{v} \in (\R^d)^{\I}$, hence $QX^{\mathbf{v}}$ has positive Lebesgue measure in $\im Q$ for Lebesgue almost every $\mathbf{v} \in (\R^d)^{\I}$. Combining these results with the upper bound given by Theorem \ref{th:dimension-results}\ref{it:dimension-upper-bounds} completes the proof of the theorem.
\end{proof}
\section{Construction of examples: proof of Theorem \ref{th:general-examples} and Theorem \ref{intro:sumset}}\label{se:general-examples}

\subsection{Proof of Theorem \ref{th:general-examples}}
Applying a theorem of Mostow \cite{mostow} we equip $\R^d$ with an inner product structure with respect to which $G$ is self-adjoint, and we equip $\wedge^k\R^d$ with the corresponding induced inner product. A linear subspace of $\wedge^k\R^d$ is thus $G$-invariant (respectively $G^o$-invariant) if and only if its orthogonal complement is.

\subsubsection{Proof of Theorem \ref{th:general-examples}\ref{it:split}} We claim that $\wedge^k\R^d$ may be written as a direct sum of two $G$-invariant subspaces each of which contains a nonzero pure $k$-wedge and which are additionally pairwise orthogonal. Let $F_1$ and $F_2$ be two transverse $G$-invariant subspaces that contain pure wedges. Since $\A$ is $k$-dominated, we can choose an element $g \in G$ such that $g^{\wedge k}$ is proximal. Let $E_1$ be the smallest $G$-invariant subspace of $\wedge^k \R^d$ which contains the leading eigenspace of $g^{\wedge k}$;  obviously $E_1$ is $G$-irreducible. Since $g^{\wedge k}$ is proximal, the multiplicity of $E_1$ must be $1$, i.e.\ no other irreducible subspace of $\wedge^k \R^d$ can be isomorphic to $E_1$ as an $\R[G]$-module. Since $E_1^\perp$ is $G$-invariant and $E_1$ has multiplicity one, every $G$-invariant subspace which does not contain $E_1$ must be contained in $E_1^\perp$. Hence either $F_1$ or $F_2$ must be contained in $E_1^\perp$, and the claim is proved. 

We now fix a $G$-invariant orthogonal splitting $\wedge^k \R^d=E_1\oplus E_2$ such that each of $E_1$ and $E_2$ contains a pure wedge.  Using $k$-domination, choose $n_0 \geq 1$ such that $(A_\iii^T A_\iii)^{\wedge k}$ has a simple leading eigenspace for all words $\iii$ of length at least $n_0$. Clearly this leading eigenspace is always contained in either $E_1$ or $E_2$, and a simple argument by contradiction shows that there exists $n_1 \geq n_0$ such that the simple leading eigenspace of $(A_\iii^T A_\iii)^{\wedge k}$ is contained in the same subspace $E_1$, say, for all words $\iii$ of length at least $n_1$. Hence for all long enough words $\iii$,
\[\left\|A_\iii^{\wedge k}\right\|_{E_1}=\sigma_1\left(A_\iii^{\wedge k}\right)=\prod_{j=1}^k \sigma_j(A_\iii)\]
and
\[\|A_\iii^{\wedge k}\|_{E_2}\leq\sigma_2\left(A_\iii^{\wedge k}\right)= \left(\frac{\sigma_{k+1}(A_\iii)}{\sigma_k(A_\iii)}\right)\prod_{j=1}^k \sigma_j(A_\iii). \]
Now let $Q \in \End(\R^d)$ be a rank-$k$ orthogonal projection whose image subspace corresponds to a wedge in $E_2$. Since $Q^{\wedge k}$ is also an orthogonal projection its kernel includes the orthogonal complement of $E_2$, which is $E_1$. Hence for some real number $\kappa>0$,
 \[\frac{\left\|\left(QA_{\iii}\right)^{\wedge k}\right\|} 
{\left\|A_{\iii}^{\wedge k}\right\|}\leq\frac{\left\|A_{\iii}^{\wedge k}\right\|_{E_2}}{\left\|A_{\iii}^{\wedge k}\right\|_{E_1}} \leq \frac{\sigma_{k+1}(A_\iii)}{\sigma_k(A_\iii)} \leq e^{-\kappa|\iii|}\]
for all long enough words $\iii$. The inequality
\[P_Q(\A,s) \leq P(\A,s)-\kappa(s-k+1)<P(\A,s)\]
for all $s \in (k-1,k]$ follows directly.

\subsubsection{Proof of Theorem \ref{th:general-examples}\ref{it:no-split}} Let $Q \in \End(\R^d)$ have rank $k$. By Theorem \ref{th:pressure-detail}\ref{it:monotone-ker-q} we may without loss of generality suppose that $Q$ is an orthogonal projection, in which case $Q^{\wedge k} \in \End(\wedge^k\R^d)$ is an orthogonal projection of rank one.

We claim that there exist $m\geq 1$ and $\kappa_1>0$ such that for every $v_1\wedge \cdots \wedge v_k$ in the unit sphere of $ \wedge^k \R^d$,
\[ \max_{|\jjj|\leq m}\left\|(QA_\jjj)^{\wedge k} (v_1\wedge\cdots \wedge v_k)\right\| \geq \kappa_1.\]
Suppose for a contradiction that the claim is false. By a simple compactness argument there exists $v_1\wedge \cdots \wedge v_k \in \wedge^k \R^d$ with norm $1$ such that $(QA_\jjj)^{\wedge k}(v_1\wedge \cdots \wedge v_k)=0$ for all $\jjj \in \Gamma_\I$. If this is the case then the linear subspace
\[\Span \left\{A_\jjj^{\wedge k} (v_1\wedge\cdots \wedge v_k) \colon \jjj \in \Gamma_\I\right\}\]
is a $G$-invariant subspace of $\ker Q^{\wedge k}$ which contains a pure wedge. On the other hand, the orthogonal complement of this space is also $G$-invariant and contains $\im Q^{\wedge k}$, which is a one-dimensional subspace spanned by a pure wedge. This contradicts the hypothesis, and the claim follows.

We next claim that there exists $\kappa_2> 0$ such that for every $s \in [0, k]$ and $\iii \in \Gamma_\I$,
\begin{equation*}
\max_{|\jjj|\leq m} \varphi^s\left(QA_\jjj A_\iii\right) \geq \kappa_2 \varphi^s(A_\iii).
\end{equation*}
 Indeed, given $\iii \in \Gamma_\I$, choose a pure wedge $u=u_1\wedge\cdots \wedge u_k \in \wedge^k \R^d$ of norm $1$ such that $\|A_\iii^{\wedge k}u\|=\|A_\iii^{\wedge k}\|$ and define $v:=\|A_\iii^{\wedge k}\|^{-1} A_\iii^{\wedge k}u$, which is of course also a pure wedge of norm $1$. By the definition of $\kappa_1$ there exists a word $\jjj_0$ of length at most $m$ such that $\|(QA_{\jjj_0})^{\wedge k}v\| \geq \kappa_1$. 
Now for every non-negative integer $\ell \leq k$,
\begin{align*}\kappa_1 \leq \left\|(QA_{\jjj_0})^{\wedge k}v\right\| &=  \frac{\|(QA_{\jjj_0} A_\iii)^{\wedge k}u\|}{\|A_\iii^{\wedge k}\|}\\
&\leq  \frac{\|(QA_{\jjj_0} A_\iii)^{\wedge k}\|}{\|A_\iii^{\wedge k}\|}\\
&=\frac{\left(\prod_{j=1}^\ell
\sigma_j(QA_{\jjj_0} A_\iii)\right)\left(\prod_{j=\ell+1}^k \sigma_j(QA_{\jjj_0} A_\iii)\right)}{\prod_{j=1}^k \sigma_j(A_\iii)}\\
&\leq \frac{\|A_{\jjj_0}\|^{k-\ell} \left(\prod_{j=1}^\ell \sigma_j(QA_{\jjj_0} A_\iii)\right)\left(\prod_{j=\ell+1}^k \sigma_j(A_\iii)\right)}{\prod_{j=1}^k \sigma_j(A_\iii)}\\
&=\frac{\|A_{\jjj_0}\|^{k-\ell}\cdot  \prod_{j=1}^\ell\sigma_j(QA_{\jjj_0} A_\iii)}{\prod_{j=1}^\ell \sigma_j( A_\iii)}=\frac{\|A_{\jjj_0}\|^{k-\ell}  \|(QA_{\jjj_0} A_\iii)^{\wedge \ell}\|}{\|A_\iii^{\wedge \ell}\|}\end{align*}
so that
\[\left\|(QA_{\jjj_0} A_\iii)^{\wedge \ell}\right\|
\geq \kappa_1 \|A_{\jjj_0}\|^{\ell-k} \left\|A_\iii^{\wedge \ell}\right\|
\geq \kappa_1 \left(\min_{\substack{|\kkk|\leq m\\ 0 \leq r \leq k}} \|A_\kkk\|^{r-k}\right) \left\|A_\iii^{\wedge \ell}\right\| = \kappa_2 \left\|A_\iii^{\wedge \ell}\right\|,\]
say, where $\kappa_2$ does not depend on $\ell$ or $\iii$. It follows directly that 
\[\max_{|\jjj|\leq m} \varphi^s\left(QA_{\jjj}A_\iii\right)\geq \varphi^s\left(QA_{\jjj_0}A_\iii\right) \geq \kappa_2 \varphi^s(A_\iii)\]
for every $s \in [0,k]$, and since $\iii$ was arbitrary this proves the claim. 

We may now complete the proof. For every $n\geq 1$ and $s \in [0,k]$,
\[\sum_{\ell=1}^m \sum_{|\iii|=n+\ell} \varphi^s(QA_\iii) = \sum_{|\iii|=n} \sum_{1 \leq |\jjj|\leq m}\varphi^s(QA_\jjj A_\iii)
 \geq \kappa_2 \sum_{|\iii|=n} \varphi^s(A_\iii)\]
and therefore
\begin{align*}P_Q(\A,s) &=\lim_{n \to \infty} \frac{1}{n} \log \sum_{\ell=1}^{m} \sum_{|\iii|=n+\ell} \varphi^s(QA_\iii)\geq \lim_{n \to \infty} \frac{1}{n} \log 
 \sum_{|\iii|=n}\kappa_2 \varphi^s(A_\iii) =P(\A,s) \end{align*}
 for every $s \in [0,k]$. This proves \ref{it:no-split} since we also have $P(\A,s)\geq P_Q(\A,s)$. 

\subsubsection{Proof of Theorem \ref{th:general-examples}\ref{it:bad-measure-projection}} By the one-sided version of the Oseledets multiplicative ergodic theorem there exist an invariant Borel set $\Lambda \subseteq \I^\N$, real numbers $\lambda_1 \geq \cdots \geq \lambda_k>\lambda_{k+1}\geq \cdots \geq\lambda_d$ and a measurable function $\mathfrak{v}$ from $\Lambda$ to the $1$-codimensional Grassmannian of $\wedge^k \R^d$ with the following properties. For all $j=1,\ldots,d$ and $\underline{\iii} \in \Lambda$,
\begin{equation}
    \label{eq:LE-exist}
\lambda_j=\lim_{n \to \infty} \frac{1}{n}\log \sigma_j\left(A_{\underline{\iii}|_n}^T\right).\end{equation}
For all $\underline{\iii} \in \Lambda$,
\[\lim_{n \to \infty} \frac{1}{n}\log \left\|\left(A_{\underline{\iii}|_n}^T\right)^{\wedge k} v\right\|\leq \sum_{j=1}^{k-1}\lambda_j + \lambda_{k+1}\]
if $v \in \mathfrak{v}(\underline{\iii})$, and
\[\lim_{n \to \infty} \frac{1}{n}\log \left\|\left(A_{\underline{\iii}|_n}^T\right)^{\wedge k} v\right\|= \sum_{j=1}^{k}\lambda_j\]
otherwise. Finally, for all $\underline{\iii} \in \Lambda$ and $n \geq 1$,
\[\left(A_{\underline{\iii}|_n}^T\right)^{\wedge k} \mathfrak{v}(\underline{\iii})=\mathfrak{v}\left(\sigma^n \underline{\iii}\right).\]
It follows from later proofs of  Oseledets' theorem (e.g.\ \cite[Theorem 1.6]{ruelle.oseledets}) that the one-dimensional subspace $\mathfrak{v}(\underline{\iii})^\perp$ is the limit as $n \to \infty$ of the one-dimensional top eigenspaces of the eventually proximal sequence of linear maps $(A_{\underline{\iii}|_n}A_{\underline{\iii}|_n}^T)^{\wedge k}$.

Write $\wedge^k \R^d = E_1\oplus E_2$ where each of $E_1$ and $E_2$ is $G^o$-invariant and non-trivial. Since the splitting $E_1 \oplus E_2$ is preserved by each $A_{\underline{\iii}|_n}A_{\underline{\iii}|_n}^T$, the subspace $\mathfrak{v}(\underline{\iii})^\perp$ is always contained in either $E_1$ or $E_2$. By relabelling the two spaces if necessary, we suppose without loss of generality that $\mathfrak{v}(\underline{\iii})^\perp \subseteq E_1$ on a set of positive measure. By replacing $E_1$ with a smaller subspace and $E_2$ with the orthogonal complement of the smaller subspace, we may also assume without loss of generality that there is no proper $G^o$-invariant subspace of $E_1$ which contains $\mathfrak{v}(\underline{\iii})^\perp$ on a set of positive measure, and also that the spaces $E_1$ and $E_2$ are mutually orthogonal. 

We claim that additionally $\mathfrak{v}(\underline{\iii})^\perp \subseteq E_2$ on a set of positive measure. Suppose for a contradiction that this is false, in which case we must have $\mathfrak{v}(\underline{\iii})^\perp \subseteq E_1$ for all $\underline{\iii}$ belonging to some full-measure invariant set $\Lambda'\subseteq \Lambda$. Since by hypothesis $E_1$ cannot be $G$-invariant we may choose $\kkk \in \Gamma_\I$ of length $n$, say, such that 
$A_\kkk^{\wedge k}E_1 \neq E_1$. Now, for each $\underline{\iii} \in [\kkk]\cap \Lambda'$ we have $(A_{\underline{\iii}|_n}^T)^{\wedge k}\mathfrak{v}(\underline{\iii}) = \mathfrak{v}(\sigma^n\underline{\iii})$ and therefore $A_{\underline{\iii}|_n}^{\wedge k}\mathfrak{v}(\sigma^n\underline{\iii})^\perp = \mathfrak{v}(\underline{\iii})^\perp$, with both $\mathfrak{v}(\sigma^n\underline{\iii})^\perp$ and  $\mathfrak{v}(\underline{\iii})^\perp$ being contained in $E_1$. Thus $\mathfrak{v}(\underline{\iii})^\perp \subseteq E_1 \cap A_\kkk^{\wedge k} E_1$ for a.e.\ $\underline{\iii} \in [\kkk]$. Since  $\mu$ is fully supported, the $G^o$-invariant proper subspace $A_\kkk^{\wedge k}E_1 \cap E_1$ of $E_1$ contains $\mathfrak{v}(\underline{\iii})^\perp$ on a set of positive measure, contradicting the minimality of $E_1$. The claim is proved. 

Now choose a pure wedge $u=u_1\wedge \cdots \wedge u_k \in E_1$ such that the one-dimensional subspace spanned by $u$ lies in the support of the distribution of $\mathfrak{v}^\perp$ on $\mathbf{P}(\wedge^k \R^d)$. Let $Q \in \End(\R^d)$ be the rank-$k$ orthogonal projection whose image is spanned by $u_1,\ldots,u_k$. By construction there is a positive-measure set of $\underline{\iii} \in \Lambda$ such that the image of $Q^{\wedge k}$ is transverse to $\mathfrak{v}(\underline{\iii})$, and as a consequence of the preceding claim there is also a positive-measure set of $\underline{\iii} \in \Lambda$ such that $\im Q^{\wedge k}$ is a subspace of $\mathfrak{v}(\underline{\iii})$.
Now, for all $\underline{\iii} \in \Lambda$ such that $\im Q^{\wedge k}$ is transverse to $\mathfrak{v}(\underline{\iii})$, we have
\begin{align*}\lim_{n \to \infty} \frac{1}{n}\log \left\|\left(QA_{\underline{\iii}|_n}\right)^{\wedge k} \right\|=\lim_{n \to \infty} \frac{1}{n}\log \left\|\left(A_{\underline{\iii}|_n}^TQ\right)^{\wedge k} \right\|= \lim_{n \to \infty} \frac{1}{n}\log \left\|\left(A_{\underline{\iii}|_n}^T\right)^{\wedge k} u\right\|\end{align*}
which is equal to $\sum_{j=1}^k \lambda_j$ by the defining properties of $Q$ and $\mathfrak{v}$. Combining this result with \eqref{eq:LE-exist} yields
\[\lim_{n \to \infty} \frac{1}{n}\log \left\|\left(QA_{\underline{\iii}|_n}\right)^{\wedge \ell} \right\|=\sum_{j=1}^\ell \lambda_j\]
for every $\ell=1,\ldots,k$, and we conclude that for all such $\underline{\iii} \in \Lambda$
\[\lim_{n \to \infty}\frac{1}{n}\log \varphi^s(QA_{\underline{\iii}|_n})= \lim_{n \to \infty}\frac{1}{n}\log \varphi^s(A_{\underline{\iii}|_n})\]
for all $s \in [0,k]$. On the other hand, for all $\underline{\iii}\in \Lambda$ such that $\im Q^{\wedge k} \subseteq\mathfrak{v}(\underline{\iii})$,
\[\lim_{n \to \infty} \frac{1}{n}\log \left\|\left(QA_{\underline{\iii}|_n}\right)^{\wedge k} \right\|=\lim_{n \to \infty} \frac{1}{n}\log \left\|\left(A_{\underline{\iii}|_n}^TQ\right)^{\wedge k} \right\|\leq \sum_{j=1}^{k-1}\lambda_j +\lambda_{k+1}<\sum_{j=1}^k \lambda_j\]
and it follows easily that for all such $\underline{\iii}\in \Lambda$
\[\lim_{n \to \infty}\frac{1}{n}\log \varphi^s(QA_{\underline{\iii}|_n})< \lim_{n \to \infty}\frac{1}{n}\log \varphi^s(A_{\underline{\iii}|_n})\]
for all $s \in (k-1,k]$. Since each situation occurs on a set of positive measure, the theorem is proved.

\subsection{Proof of Theorem \ref{intro:sumset}}
Suppose that we are given tuples $(A_i')_{i \in \I} \in \GL_{d_1}(\R)^\I$, $(B_j')_{j \in \J} \in \GL_{d_1}(\R)^\J$,  $(A_i'')_{i \in \I}\in \GL_{d_2}(\R)^\I$, $(B_j'')_{j \in \J} \in \GL_{d_2}(\R)^\J$ 
such that if we define $A_i:=A_i'\otimes A_i''$, $B_j:=B_j'\otimes B_j''$, $\A:=(A_i)_{i \in \I}$, $\B:=(B_j)_{j \in \J}$ and $\A\oplus \B:=(A_i\oplus B_j)_{(i,j) \in \I\times \J}$, then there hold the contraction properties
\[\max_{i\in \I} \|A_i\|<\frac{1}{2},\qquad \max_{j\in\J} \|B_j\|<\frac{1}{2},\]
and the dimension bound
\[s:=\dimaff\A+\dimaff \B\in(d_1d_2-1,d_1d_2)  ,\]
and such that additionally for some $k_1, k_2$ such that $1 <k_i\leq d_i$ for $i=1,2$,
\begin{equation}\label{eq:domination-liminf1}\liminf_{n \to \infty}\min_{|\iii|=n}\frac{1}{n}\log \left(\frac{\sigma_{k_1-1}(A_\iii')}{\sigma_{k_1}(A_\iii')}\right) > \frac{1}{s-(d_1d_2-1)} P(\A\oplus \B, d_1d_2-1),\end{equation}
\begin{equation}\label{eq:domination-liminf2}
  \liminf_{n \to \infty}\min_{|\jjj|=n}\frac{1}{n}\log \left(\frac{\sigma_{k_2-1}(B_\jjj'')}{\sigma_{k_2}(B_\jjj'')}\right) > \frac{1}{s-(d_1d_2-1)} P(\A\oplus \B, d_1d_2-1).\end{equation}
We observe that these conditions are open: for the contraction property this is trivial, for the dimension bound it follows from the main result of Feng and Shmerkin \cite{FeSh14}, and for \eqref{eq:domination-liminf1}--\eqref{eq:domination-liminf2} this follows from the domination theorems of Bochi and Gourmelon \cite{BoGo09} together with the continuity of the pressure as established in \cite{FeSh14}. This set of conditions is easily seen to be nonempty for large enough $\I, \J$ by considering cases in which $\A$ and $\B$ are constant tuples each given by a suitable diagonal matrix whose diagonal entries take exactly two distinct values.  
Let $(T_i)_{i \in \I}$, $(T_j')_{j \in \J}$ be iterated function systems with respective linearisations $\A$, $\B$ and with attractors $X$, $Y$ having Hausdorff dimension equal to $\dimaff \A$ and $\dimaff \B$ respectively. Let $Q \in \End(\R^{d_1d_2} \oplus \R^{d_1d_2})$ be given by $Q(u\oplus v):=(u+v)\oplus 0$. Clearly $X+Y$ is isometric to the image $Q(X\times Y)$ of the self-affine set $X\times Y$ which is the attractor of the iterated function system $(T_i\oplus T_j')_{(i,j)\in \I\times \J}$, so in view of Theorem \ref{th:pressure-detail}\ref{it:lipschitz-convexity} and Theorem \ref{th:dimension-results}\ref{it:dimension-upper-bounds}, to obtain the desired dimension estimate on $X+Y$ it is sufficient to show that $P_Q(\A\oplus \B,s)< 0$. 
Let $n$ be large enough that the inequalities implied by \eqref{eq:domination-liminf1} and \eqref{eq:domination-liminf2} are realised, and suppose that $|\iii|=|\jjj|=n$. Clearly
\begin{equation}\label{eq:truncate}\varphi^s(Q(A_\iii \oplus B_\jjj)) \leq 2^{\frac{d_1d_2-1}{2}} \varphi^{d_1d_2-1}(A_\iii \oplus B_\jjj) \sigma_{d_1d_2}(Q(A_\iii \oplus B_\jjj))^{s-(d_1d_2-1)}\end{equation}
and to derive the desired bound we estimate the final term in this expression. Taking $u \in \R^{d_1}, v \in \R^{d_2}$ to be unit vectors such that
\[\left\|A_\iii' (A_\iii')^Tu\right\|=\sigma_{k_1}\left(A_\iii'\right)^2, \qquad \left\|B_\jjj'' (B_\jjj'')^T v\right\|=\sigma_{k_2}\left(B_\jjj''\right)^2,\]
we may bound
\begin{align*}&\sigma_{d_1d_2}(Q(A_\iii \oplus B_\jjj))^2 
= \sigma_{d_1d_2}\left( \begin{pmatrix}A_\iii &B_\jjj\\ 0&0\end{pmatrix}\right)^2
=\sigma_{d_1d_2}\left(\begin{pmatrix}A_\iii &B_\jjj\\ 0&0\end{pmatrix}\begin{pmatrix}A_\iii &B_\jjj\\ 0&0\end{pmatrix}^T\right)\\
&=\sigma_{d_1d_2}\left(A_\iii A_\iii^T + B_\jjj B_\jjj^T\right)\\
&=\sigma_{d_1d_2}\left((A_\iii' \otimes A_\iii'')(A_\iii' \otimes A_\iii'')^T + (B_\jjj' \otimes B_\jjj'')(B_\jjj' \otimes B_\jjj'')^T\right)\\
&\leq \left\|\left((A_\iii' \otimes A_\iii'')(A_\iii' \otimes A_\iii'')^T + (B_\jjj' \otimes B_\jjj'')(B_\jjj'\otimes B_\jjj'')^T\right)(u\otimes v)\right\|  \\
&\leq \sigma_{k_1}\left(A_\iii'\right)^2\sigma_1\left(A_\iii''\right)^2 + \sigma_1\left(B_\jjj'\right)^2\sigma_{k_2}\left(B_\jjj''\right)^2\\
&\leq \exp\left(-\frac{2nP(\A\oplus\B,d_1d_2-1)}{s-(d_1d_2-1)}\right) \left(\sigma_{k_1-1}\left(A_\iii'\right)^2\sigma_1\left(A_\iii''\right)^2 + \sigma_1\left(B_\jjj'\right)^2\sigma_{k_2-1}\left(B_\jjj''\right)^2\right)\\
&<2^{1-4n}\exp\left(-\frac{2nP(\A\oplus\B,d_1d_2-1)}{s-(d_1d_2-1)}\right)\end{align*}
so that
\[ \sigma_{d_1d_2}\left(Q(A_\iii \oplus B_\jjj)\right)^{s-(d_1d_2-1)}<2^{\frac{(s-(d_1d_2-1))(1-4n)}{2}}\exp\left(-nP(\A\oplus \B, d_1d_2-1)\right).\]
Combining this expression with \eqref{eq:truncate} it follows that for all large enough $n$
\[\sum_{|\iii|=|\jjj|=n} \varphi^s(Q(A_\iii \oplus B_\jjj)) \leq 2^{\frac{s}{2}}\left(\frac{\sum_{|\iii|=|\jjj|=n} \varphi^{d_1d_2-1}(A_\iii \oplus B_\jjj)}{4^{n(s-(d_1d_2-1))}e^{nP(\A\oplus \B, d_1d_2-1)}}\right) \]
and $P_Q(\A\oplus\B,s)<0$ follows directly.

%\subsection{Fundamental notation}
\appendix
\section{Guide to notation}\label{ap:notation}
For the reader's convenience, a summary of key notation is given below.

%We include a list of relatively often used notations with a reference to their first occurrence in the text.

\begin{small}
\begin{adjustwidth}{-0.8cm}{-0.05cm}
\renewcommand{\arraystretch}{1.1}

%\begin{tabularx}{\textwidth}{lX}

\begin{longtable}{p{0.23\textwidth} p{0.72\textwidth}}
\caption{Index of notation}\\
\hline\\

$W,U,V$ & finite-dimensional real vector spaces (sometimes considered as $\R[G]$-modules with respect to a linear algebraic group $G$). \\

$\End(W)$ & vector space of linear maps $W \to W$. \symref{notation:EndW}  \\

$\GL(W)$;  $\GL_d(\R)$ & group of invertible linear maps $W \to W$; group of invertible $d\times d$ real matrices. \symref{notation:GLW} \\

$\mathbf{P}(W)$ &  the projective space of lines in $W$. \symref{notation:projW} \\

$\mathbf{S}_W$
& the unit sphere of the vector space $W$. \symref{notation:sphere} \\

$\mathbf{B}_d(x,r)$
& closed Euclidean ball with centre $x$ and radius $r$ in dimension $d$. \symref{notation:ball} \\
%We use closed balls because the image of a closed ball under a non-invertible linear map is a closed ellipsoid with the relevant singular values. This is not true for open balls: an open ellipsoid with a zero singular value is the empty set

$\mathcal{I}$ & a non-empty finite set. \symref{notation:I} \\

$\Gamma_{\mathcal{I}}$ & set of all nonempty finite words over $\mathcal{I}$, sometimes viewed as a semigroup under concatenation. \symref{notation:Gamma_I}\\

$\iii,\jjj,\kkk$ & finite words over $\mathcal{I}$. \symref{notation:ijk} \\

$|\iii|$ & length of $\iii \in \Gamma_{\mathcal{I}}$.  \symref{notation:|i|}\\

$\underline{\iii}$ & an infinite word over $\mathcal{I}$, i.e. an element of $\I^\N$. \symref{notation:underlinei} \\

$\underline{\iii}|_n$ & length-$n$ prefix of $\underline{\iii}$. \symref{notation:i_n} \\

$[\iii]$ & cylinder set
$\{\underline{\jjj}\in\I^\N:\underline{\jjj}|_{|\iii|}=\iii\}$. \symref{notation:icylnd}\\

$\sigma_1(A)\geq\cdots\geq\sigma_d(A)$%\geq0$}
& singular values of $A\in\End(\R^d)$ in decreasing order. \symref{notation:sing.values}\\

$\varphi^s$ & $s$-singular value potential $\End(\R^d)\to\R$, with $s\in[0,d]$. \symref{notation:varphis} \\

$A^{\wedge k}$ & $k$th exterior power of an endomorphism $A$ of $\R^d$. \symref{notation:Awedgek} \\

$\A$ & tuple $(A_i)_{i\in\I}\in\GL_d(\R)^\I$. \symref{notation:A} \\

$P(\A,s)$ & pressure of $\A$ at $s$. \symref{notation:P(A,s)} \\

$\mathcal{M}_\sigma(\I^\N)$
& set of shift-invariant measures on $\I^\N$. \symref{notation:M_sigma} \\

$h(\mu)$ & metric entropy of $\mu\in\mathcal{M}_\sigma(\I^\N)$. \symref{notation:hmu}\\

$\dimlyap(\A,\mu)$
& Lyapunov dimension of $\mu$ with respect to $\A$. \symref{notation:dimlyapAmu}\\

$Q$ or $Q_U$ & an endomorphism of $\R^d$ or orthogonal projection onto $U$. \symref{notation:Q} \\

$P_Q(\A,s)$ & $Q$-projected pressure of $\A$ at $s$. \symref{notation:PQ(A,s)} \\

$\dimaffQ \A$ & $Q$-projected affinity dimension of $\A$. \symref{notation:dimaffQ} \\

$\dimlyapQ(\A,\mu)(\underline{\iii})$
& $Q$-projected local Lyapunov dimension. \symref{notation:dimlyapQ}\\

$\dimh X$ & Hausdorff dimension of $X$. \symref{notation:dimhX} \\

$\dimbu X$ & upper box dimension of $X$. \symref{notation:dimbu} \\

$\Psi$ & potential $\Gamma_\mathcal{I}\to(0,\infty)$. \symref{notation:Psi} \\

$P(\Psi)$ & pressure of the potential $\Psi$. \symref{notation:P(Psi)} \\

$G$ & a linear algebraic group. \symref{notation:G} \\

$G^o$ & Zariski-connected component of $G$. \symref{notation:Go} \\

$\Gamma$ & a subsemigroup of $G$. \symref{notation:Gamma} \\

$\Gamma^o$ & the subsemigroup $\rho^{-1}(G^o) \cap \Gamma$ of $\Gamma$, relative to a fixed representation $\rho \colon \Gamma \to G\leq \GL(W)$. \symref{notation:Gammao} \\ 

$H \leq G$ & $H $ is a subgroup of $G$.  \\

$H \unlhd G$ & $H $ is a normal subgroup of $G$. \symref{notation:normal} \\

$\rho$ & finite-dimensional representation into $\GL(W)$. \symref{notation:rho} \\

$\R[G^o]$ & group ring of $G^o$. \symref{notation:grp.alg} \\

$V \leq W$& $V$ is a submodule of $W$. \symref{notation:submodule}\\

$U_j>U_j',\hat U_j>\hat U_j'$
& $\R[G^o]$-submodules of an $\R[G^o]$-module $V_j$. \symref{notation:Ujs} \\

$\mathbf{U}$
& an $r$-tuple of pairs $U_j>U_j'$ of submodules of the $\R[G^o]$-modules $V_j$. \symref{notation:bfU} \\

$\mathfrak{U}$
& set of all such tuples $\mathbf{U}$ relative to a fixed tuple $(V_j)_{j=1}^r$. \symref{notation:frakU} \\

$\mathfrak{U}_S$
& set of all $r$-tuples $\mathbf{U}\in \mathfrak{U}$ such that every quotient $U_j/U_j'$ is a simple $\R[G^o]$-module. \symref{notation:frakUS} \\

$\mathbf{b}$
& an $r$-tuple of non-negative real numbers $\beta_j$. \symref{notation:bfb} \\

$\Psi_{\mathbf{U},\mathbf{b}}$
& a potential $\Gamma_\I \to (0,\infty)$ defined in terms of a tuple of group elements $(g_i)_{i \in \I} \in G^\I$ together with an $r$-tuple of representations $\rho_j \colon G \to \GL(V_j)$ and $r$-tuples $\mathbf{U}\in \mathfrak{U}$ and $\mathbf{b}$ as above. \symref{notation:PsiUb} \\%Even if this description isn't very explanatory, I think people will expect to find this symbol in the index, and it is perhaps useful to explicitly list everything it depends on

$Z_j$ & maximal $\R[G^o]$-submodule contained in $\ker Q_j$. \symref{notation:Zj} \\

$\mathfrak r(\underline{\iii},n)$
& $n^{th}$ return time to $G^o$ of the product of $(g_i)_{ \in \I}$ along $\underline{\iii}$. \symref{notation:return} \\

$\spectralradius(B)$
& spectral radius of an endomorphism $B$. \symref{notation:specrad} \\

$\mathbf{v}$
& tuple $(v_i)_{i\in\I}\in(\R^d)^\I$. \symref{notation:bfv} \\

$T_i^{\mathbf v}$
& affine map $x\mapsto A_i x+v_i$. \symref{notation:T_iv} \\

$X^{\mathbf v}$
& attractor of $(T_i^{\mathbf v})_{i\in\I}$. \symref{notation:Xv} \\

$\Pi^{\mathbf v}$
& coding map $\I^\N\to X^{\mathbf v}$. \symref{notation:Piv} \\

$\mathcal N_Z(\delta)$
& number of $\delta$-mesh cubes intersecting $Z\subset\R^d$. \symref{notation:NZdelta} \\

$\|\cdot\|_W$
& Euclidean norm on $W$, or induced operator norm. \symref{notation:Wnorm} \\

$\|\cdot\|_{W_1\to W_2}$
& operator norm from $W_1$ to $W_2$. \symref{notation:W12} \\

$\threebar{\cdot}$ & arbitrary norm on a finite-dimensional vector space, or induced operator norm \symref{notation:arbitnorm}.\\
\hline
\end{longtable}

\end{adjustwidth}
\end{small}
\subsection*{Acknowledgements}

The authors thank Artur Avila, Bal\'azs B\'ar\'any, Jairo Bochi, Antti K\"aenm\"aki and Fran\c{c}ois Ledrappier for helpful comments and suggestions. They also thank an anonymous referee for their careful reading of this manuscript and for a number of helpful suggestions and references.
\bibliographystyle{acm}
\bibliography{exceptional}

\end{document}